\documentclass[10pt]{amsart}       
\usepackage{graphicx} 
\usepackage{ytableau}
\usepackage{amsaddr}
\usepackage{subcaption}
\usepackage[all]{xy}
\usepackage{float} 
\floatstyle{ruled}
\newfloat{Algorithm}{htbp}{algo}
\usepackage{pgf,tikz}
\usetikzlibrary{calc}
\usetikzlibrary{arrows, automata}
\usepackage{subcaption}
\usepackage{xcolor}
\usepackage{url}
\usepackage[utf8]{inputenc}
\usepackage[backend=biber, maxnames=4]{biblatex}
\usepackage{doi}
\captionsetup[subfigure]{labelformat=parens,labelsep=space}
\usepackage{hyperref}    
\allowdisplaybreaks[1]

\addbibresource{refs.bib}
\DeclareFieldFormat{doi}{\href{https://dx.doi.org/#1}{\textsc{doi}}}
\DeclareFieldFormat{url}{\href{#1}{\textsc{url}}}
\renewbibmacro*{doi+eprint+url}{
  \printfield{doi}
  \newunit\newblock
  \iftoggle{bbx:eprint}{
    \usebibmacro{eprint}
  }{}
  \newunit\newblock
  \iffieldundef{doi}{
    \usebibmacro{url+urldate}}
  {}
}
\ExecuteBibliographyOptions{isbn=false,giveninits=true}
\renewbibmacro{in:}{}
\AtEveryBibitem{
  \ifentrytype{book}
  {\clearfield{pages}}
}

\setcounter{tocdepth}{2}

\theoremstyle{plain} 
\newtheorem{lemma}{Lemma}
\newtheorem{corollary}[lemma]{Corollary}
\newtheorem{theorem}[lemma]{Theorem}
\newtheorem{proposition}[lemma]{Proposition}
\theoremstyle{remark}
\newtheorem{example}[lemma]{Example}
\newtheorem{remark}[lemma]{Remark}
\theoremstyle{definition}
\newtheorem{definition}[lemma]{Definition}

\numberwithin{lemma}{section}
\numberwithin{equation}{section}

\newcommand\Fq{\mathbf F_q}
\newcommand\Tab{\mathrm{Tab}}
\newcommand\T{\mathcal T}
\newcommand\ZZ{\mathbf Z}
\newcommand\arc{\mathrm{arc}}

\newcommand\nat{\mathbf P}
\newcommand\nn{\mathbf N}

\newcommand\stab{\mathrm{sTab}}
\newcommand\supp{\mathrm{supp}}

\renewcommand\AA{\mathcal{A}}
\renewcommand\subset{\subseteq}
\renewcommand\supset{\supseteq}  

\DeclareMathOperator\rank{rank}

\title[Set Partitions, Tableaux, and Profiles]{Set Partitions, Tableaux, and Subspace Profiles under Regular Diagonal Matrices}
\author{Amritanshu Prasad}
\address{The Institute of Mathematical Sciences, Chennai, India.}
\address{Homi Bhabha National Institute, Mumbai, India.}
\email{amri@imsc.res.in}

\author{Samrith Ram}
\address{Indraprastha Institute of Information Technology Delhi, New Delhi, India.}
\thanks{An extended abstract \cite{fpsac} describing some of these results appeared in the proceedings of FPSAC 2022.} 
\email{samrith@iiitd.ac.in}

 \keywords{Finite field, subspace dynamics, splitting subspace, interlacing, $q$-Stirling number, crossing, chord diagram, $q$-Hermite polynomials, generalized Catalan numbers.} 

\subjclass[2020]{05A15,05A18,05A30,15B33,33C45}
 
\begin{document}
\begin{abstract}
  We introduce a family of univariate polynomials indexed by integer partitions. At prime powers, they count the number of subspaces in a finite vector space that transform under a regular diagonal matrix in a specified manner. This enumeration formula is a combinatorial solution to a problem introduced by Bender, Coley, Robbins and Rumsey. At $1$, they count set partitions with specified block sizes. At $0$, they count standard tableaux of specified shape. At $-1$, they count standard shifted tableaux of a specified shape. These polynomials are generated by a new statistic on set partitions (called the interlacing number) as well as a polynomial statistic on standard tableaux. They allow us to express $q$-Stirling numbers of the second kind as sums over standard tableaux and as sums over set partitions.  

  For partitions whose parts are at most two, these polynomials are the non-zero entries of the Catalan triangle associated to the $q$-Hermite orthogonal polynomial sequence.
  In particular, when all parts are equal to two, they coincide with the polynomials defined by Touchard that enumerate chord diagrams by the number of crossings.
  \end{abstract}

\maketitle

\tableofcontents
\section{Introduction}   
\label{sec:introduction}
In this paper, we define a family of polynomials $b_\lambda(q)$ indexed by integer partitions. The polynomial $b_\lambda(q)$ is a sum over $\Tab_{[n]}(\lambda)$, the set of standard tableaux of shape $\lambda$:
\begin{displaymath}
  b_\lambda(q) = \sum_{T\in \Tab_{[n]}(\lambda)} c_q(T),
\end{displaymath}
where $c_q(T)$ is a polynomial in $q$ associated to a standard tableau $T$.
The polynomial $c_q(T)$ arises out of a surprising new connection between two classical combinatorial classes, namely set partitions and standard tableaux and is defined as follows. For a standard tableau $T$ of shape $\lambda=(\lambda_1,\dotsc,\lambda_m)$, let $T_{ij}$ denote the entry in the $i$th row and $j$th column of $T$. For $1\leq i\leq m$ and $2\leq j\leq \lambda_i$ define
  \begin{displaymath}
    c_{ij}(T) = \#\{i'\mid i'\geq i \mbox{ and } T_{i',j-1}<T_{ij}\}.
  \end{displaymath}
See Example \ref{eg:ct} for a specific calculation. Then 
  \begin{equation}
    \label{eq:c}
    c_q(T) := \prod_{i=1}^m \prod_{j=2}^{\lambda_i} [c_{ij}(T)]_q,
  \end{equation}
  where $[n]_q=1+q+\cdots+q^{n-1}$ denotes the $q$-analog of the integer $n$.
  
In Section~\ref{sec:tabl-assoc-set} we associate a standard tableau to each partition of the set $[n]=\{1,\dotsc,n\}$.
Set partitions that map to a given tableau $T$ are counted by $c(T):=\prod_{i,j}c_{ij}(T)$.

The relationship between set partitions and tableaux gives rise to an algorithm (see Section~\ref{sec:enumeration}) for generating the set partitions corresponding to a given tableau.
By nesting this algorithm inside generators for standard tableaux, a new algorithm for the enumeration of set partitions with given block sizes is obtained.
This algorithm allows for more refined enumeration, for example, when the least elements of the blocks are specified.

In Section~\ref{sec:interl-stat}, we introduce a statistic on set partitions called the interlacing number, which is a variant of the well-known crossing number of a set partition (see, for example, \cite{MR2272140}). We show that the polynomial $c_q(T)$ is the generating polynomial of the interlacing statistic on the class of set partitions associated to the tableau $T$ (Theorem~\ref{theorem:statistic}). Moreover, noninterlacing set partitions of shape $\lambda$ are in bijection with standard tableaux of shape $\lambda$. Consequently, noninterlacing partitions of $[n]$ are in bijection with involutions in the symmetric group $S_n$ (see Corollary \ref{cor:noninterl}).

In Section~\ref{sec:minusone} we show that $b_\lambda(-1)$ counts the number of standard shifted tableaux of shape $\lambda$ for partitions whose parts are distinct, with the possible exception of the largest part.

When $q$ is a prime power, the polynomials $b_\lambda(q)$ can also be given a finite field interpretation. Let $\Fq$ be a finite field with $q$ elements and let $\Delta$ be a linear operator on $\Fq^n$. In this article, we focus on the case where $\Delta$ has $n$ distinct eigenvalues in $\Fq$.
Thus $\Delta$ is represented by a diagonal matrix with distinct entries on its diagonal, which we refer to as a regular diagonal matrix.

\begin{definition}
  \label{definition:profile}
  A subspace $W\subset \Fq^n$ has partial $\Delta$-profile $\mu=(\mu_1,\ldots,\mu_k)$ if 
  \begin{displaymath}
    \dim (W + \Delta W + \dotsb + \Delta^{j-1}W) = \mu_1+\dotsb+\mu_j \text{ for } 1\leq j\leq k.
  \end{displaymath}
Furthermore, if 
  \begin{displaymath}
    \dim (W + \Delta W + \dotsb + \Delta^{k-1}W) =   \dim (W + \Delta W + \dotsb + \Delta^kW),
  \end{displaymath}
  then we say that $W$ has $\Delta$-profile $\mu$.
\end{definition}
If $\mu=(\mu_1,\dotsc,\mu_k)$ is a partial profile of a subspace, then $\mu_j\geq \mu_{j+1}$ for all $j$, and so $\mu$ is a partition of some integer less than or equal to $n$ (see Lemma \ref{lem:ispartition}). The $\Delta$-profile of a subspace was introduced by Bender, Coley, Robbins and Rumsey \cite{MR1141317} who called it the dimension sequence. They considered the problem of determining the number of subspaces with $\Delta$-profile $\mu$ and gave explicit product formulas in the cases where $\Delta$ is regular nilpotent or has irreducible characteristic polynomial. 
Notions of cyclic vectors, splitting subspaces and anti-invariant subspaces can all be expressed in terms of profiles and partial profiles.

Let $\sigma_n(\mu)$ denote the number of subspaces with $\Delta$-profile $\mu$.
We prove that (see Theorem~\ref{theorem:counting-profile})
\begin{equation}
  \label{eq:sigmu}
 \sigma_n(\mu) = \binom n{|\mu|}(q-1)^{\sum_{j\geq 2}\mu_j}q^{\sum_{j\geq 2}\binom{\mu_j}2}b_{\mu'}(q),
\end{equation}
where $\mu'$ denotes the partition conjugate to $\mu$.

To summarize, the polynomials $b_\lambda(q)$ have the following specializations:
\begin{enumerate}
\item When $q=1$, they count partitions of $[n]$ with block sizes given by the parts of $\lambda$ (Eq. \eqref{eq:pinlam}). 
\item When $q=0$, they count standard tableaux of shape $\lambda$ and also the number of noninterlacing set partitions of shape $\lambda$ (Corollary \eqref{cor:noninterl} and Eq. \eqref{eq:blamset}). 
\item When $q=-1$ and the parts of $\lambda$ are distinct, with the possible exception of the largest part, they count standard shifted tableaux of shape $\lambda$ (Theorem \ref{theorem:minusone}).
 \item When $q$ is a prime power, they count subspaces of $\Fq^n$ with $\Delta$-profile $\lambda'$, up to a factor of the form $q^a(q-1)^b$ (Eq. \eqref{eq:sigmu}). 
\end{enumerate}
We describe a recursive formula for $b_\lambda(q)$ in Section~\ref{sec:rec-comp}, which allows for faster computation.
When $\lambda$ has all parts of size at most two, the polynomials $b_\lambda(q)$ are the non-zero entries of the Catalan triangle associated to the $q$-Hermite orthogonal polynomial sequence (see Section~\ref{sec:relation-q-hermite}).
In particular, when $\lambda=(2^m)$, $b_\lambda(q)=T_m(q)$, the polynomial defined by Touchard that enumerates chord diagrams for $2m$ points arranged in a circle by number of crossings. When $\lambda$ has at most two parts, we give a bijective proof of a closed formula for $b_{\lambda}(q)$ in Section~\ref{sec:twoparts}.  

We express the $q$-Stirling numbers of the second kind in terms of the polynomials $b_\lambda(q)$, as sums over standard tableaux, and as sums over set partitions:
\begin{align}
  S_q(n,m) &= \sum_{\lambda\vdash n,\;l(\lambda)=m} q^{\sum_i(i-1)(\lambda_i-1)}b_\lambda(q), \label{eq:qs1}\\
    S_q(n,m)&= \sum_{\lambda \vdash n,\;l(\lambda)=m}q^{\sum_i (i-1)(\lambda_i-1)}\sum_{T\in \Tab_{[n]}(\lambda)} c_q(T), \label{eq:qs2}\\
  S_q(n,m) &= \sum_{\AA\in \Pi_{n,m}} q^{v(\AA)+\sum_i(i-1)(\lambda_i^\AA-1)}.    \label{eq:qs3}
\end{align} 

Here $\Pi_{n,m}$ denotes the collection of partitions of $[n]$ having $m$ blocks, $(\lambda_1^\AA,\lambda_2^\AA,\ldots)$ is the list of block sizes of $\AA$ sorted in weakly decreasing order, and $v(\AA)$ is the interlacing number of $\AA$.

Many statistics on set partitions are known to produce the $q$-Stirling numbers of the second kind  \cite{CAI201750,MR834272,MR644671,MR1082841}. Our statistic appears to be different from all of these. It is worth noting that Milne \cite{MR511401} relates $q$-Stirling numbers of the second kind to a counting problem over finite fields. However,  that problem appears to be unrelated to the counting problem on finite fields considered in this paper. Cai and Readdy \cite[Thm. ~3.2]{CAI201750} express $S_q(n,m)$ as a sum of expressions of the form $q^a(q+1)^b$ over a small class of set partitions. In this vein, we also express $S_q(n,m)$ as a sum over noninterlacing set partitions (see Eq. \eqref{eq:qs4}) but in our case, the summands are powers of $q$ times a product of $q$-integers.

Combining Eqs. \eqref{eq:sigmu}  and \eqref{eq:qs1} allows for a combinatorial interpretation of $q$-Stirling numbers of the second kind (Lemma~\ref{lemma:combin-q-stir}), which in turn leads to a bijective interpretation of an identity of Carlitz expressing $q$-binomial coefficients in terms of $q$-Stirling numbers of the second kind (see Eq. \eqref{eqn:carlitz}).

At $q=1$, the identity \eqref{eq:qs2} gives an expression for the Stirling numbers of the second kind as the sum of a statistic on standard tableaux which appears to be new. A similar identity holds for the Bell numbers (see Section \ref{sec:setstotableaux}).

If $m$ divides $n$, say $n=md$, then a $\Delta$-splitting subspace of $\Fq^n$ is a subspace $W$ of dimension $m$ such that
\begin{equation}
  \label{eq:defsplit}
  W\oplus \Delta W\oplus \cdots\oplus \Delta^{d-1}W=\Fq^n.  
\end{equation}
Thus an $m$-dimensional splitting subspace is a subspace of $\Fq^n$ with $\Delta$-profile $(m^d)$. The definition of a splitting subspace can be traced back to the work of Niederreiter \cite{N2} on pseudorandom number generation. Niederreiter, apparently unaware of the work of Bender, Coley, Robbins and Rumsey, asked for a formula for the number of $\Delta$- splitting subspaces when $\Delta$ has irreducible characteristic polynomial. The case where the invariant factors of $\Delta$ satisfy certain degree constraints was resolved in \cite{polynomialmatrices}. In the case where $\Delta$ has $n$ distinct eigenvalues in $\Fq$, Eq. \eqref{eq:sigmu} gives the number of splitting subspaces of dimension $m$ when we set $\mu=(m^d)$; see Section~\ref{sec:chord}. 

In \cite{touchard}, the number of splitting subspaces of dimension $m$ for each operator $\Delta$ on $\Fq^{2m}$ is shown to be
\begin{displaymath}
  q^{\binom m2}\sum_{j=0}^{2m}(-1)^j X^\Delta_jq^{\binom{m-j+1}2},
\end{displaymath}
where $X^\Delta_j$ is the number of $j$-dimensional $\Delta$-invariant subspaces.
In the case where $\Delta$ is diagonal with distinct diagonal entries, combining this formula with our results gives a new proof of the Touchard-Riordan formula \eqref{eq:touchard-riordan}.

In Section \ref{sec:partialprofiles} we extend our results to the enumeration of subspaces with a given partial profile.
An interesting special case is that of anti-invariant subspaces. A subspace $W$ of $\Fq^n$ is said to be $l$-fold $\Delta$-anti-invariant if
\begin{equation}
  \label{eq:defanti}
  \dim (W+\Delta W+\cdots+\Delta^{l}W)=(l+1)\cdot \dim W.
\end{equation}
 
Anti-invariant subspaces were studied by Barr{\'i}a and Halmos \cite{MR748946} who determined their maximum possible dimension in the case $l=1$. This result was extended by Kn{\"u}ppel and Nielsen \cite{MR2013452} to arbitrary $l$. It is clear from the definition that an $m$-dimensional subspace $W$ is $l$-fold $\Delta$-anti-invariant if and only if it has partial $\Delta$-profile $\mu=(m^{l+1})$. In the case where $\Delta$ has $n$ distinct eigenvalues in $\Fq$, Corollary~\ref{cor:numantiinv} gives a formula for the number of $\Delta$-anti-invariant subspaces.

The general case of the problem posed by Bender, Coley, Robbins and Rumsey on determining the number of subspaces with a given profile has recently been settled \cite{ram2023} and depends on results in \cite{ramschlosser} which in turn use ideas from this paper.


\ytableausetup{smalltableaux}
\section{Set partitions} 
\label{sec:set-partitions}
\subsection{Set partitions and standard notation}
Let $S$ be a finite subset of the set $\nat$ of positive integers.
A partition $\AA = \{A_1,\dotsc,A_m\}$ of $S$ is a decomposition
\begin{displaymath}
  S = A_1\cup \dotsb \cup A_m,
\end{displaymath}
where $A_1,\dotsc,A_m$ are pairwise disjoint non-empty subsets of $S$.
The subsets\linebreak $A_1,\ldots,A_m$ are called the \emph{blocks} of $\AA$.
The order of the blocks does not matter.
Following standard conventions \cite[\S~2.7.1.5]{TACOP4A}, the elements of each block are listed in increasing order, and the blocks are listed in increasing order of their least elements.
When this is the case, we write $\AA=A_1|\dotsb|A_m$, which we call \emph{the standard notation} for $\AA$.
The shape of a set partition is the list of cardinalities of $A_1,\dotsc,A_m$, sorted in weakly decreasing order.
Thus the shape of a partition of $S$ is an integer partition of $|S|$.
Denote the set of all partitions of $S$ by $\Pi_S$, and the set of all partitions of $S$ with shape $\lambda$ by $\Pi_S(\lambda)$. 
\begin{example}
  The standard notation for the partition
  \begin{displaymath}
    \AA=(\{6,5\},\{8,9,3\},\{1,2\})
  \end{displaymath}
  of $S=\{1,2,3,5,6,8,9\}$ is $\AA = 12|389|56$.
  The shape of $\AA$ is $(3,2,2)$.
\end{example} 
\subsection{The tableau associated to a set partition}
\label{sec:tabl-assoc-set}
A tableau is a finite array of the form
\begin{displaymath}
  T = (T_{ij}\mid T_{ij}\in \nat,\;1\leq i\leq m,\; 1\leq j\leq \lambda_i),
\end{displaymath}
where $\lambda=(\lambda_1,\dotsc,\lambda_m)$ is an integer partition.
The tableaux $T$ is displayed with the number $T_{ij}$ filled into the cell in the $i$th row and $j$th column of an array of cells.
The partition $\lambda$ is called the shape of $T$.
\begin{definition}
  \label{defn:T}
  Given $\AA=A_1|\dotsb|A_m\in \Pi_S$, form an array whose entry in the $i$th row and $j$th column is the $j$th smallest element of $A_i$.
  Then sort and top-justify the columns of this array.
  Denote the resulting tableau by $\T(\AA)$.
\end{definition}
\begin{example}\label{example:partition-tableau}
  When $\AA = 12|389|56$, the associated array is
  \begin{displaymath}
    \ytableaushort{12\none,389,5 6 \none}.
  \end{displaymath}
  Sorting and top-justifying the columns gives the tableau
  \begin{displaymath}
    \T(\AA) = \ytableaushort{129,36,58}.
  \end{displaymath}
\end{example}
\begin{definition}
  A tableau $T$ is called a \emph{multilinear tableau} if each element of $\nat$ occurs at most once in $T$, the rows of $T$ increase from left to right, and the columns of $T$ increase from top to bottom.

  The set of integers that occur in $T$ is called the support of $T$ and is denoted $\supp(T)$. For each $S\subset \nat$, let $\Tab_S$ denote the set of multilinear tableaux with support $S$. For each integer partition $\lambda$, denote the set of multilinear tableaux of shape $\lambda$ and support $S$ by $\Tab_S(\lambda)$. Let $\Tab_{\subset S}(\lambda)$ denote the set of multilinear tableaux of shape $\lambda$ whose support is a subset of $S$. 
  The usual notion of a standard Young tableau coincides with that of a multilinear tableau with support $[n]$ for some $n\geq 0$.
\end{definition}
\begin{example}
  The tableau in Example~\ref{example:partition-tableau} is a multilinear tableau of shape $(3,2,2)$ and support $\{1,2,3,5,6,8,9\}$.
\end{example}
\begin{lemma}
  Let $\lambda$ be an integer partition and $S\subset \nat$ be finite.
  For every $\AA\in \Pi_S(\lambda)$, $\T(\AA)$ is a multilinear tableau of shape $\lambda$ and support $S$.
\end{lemma}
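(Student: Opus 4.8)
The plan is to verify, one at a time, the three defining properties of a multilinear tableau for $\T(\AA)$ together with the claims about shape and support, by tracking the two-step construction of Definition~\ref{defn:T}. Throughout, write $\AA=A_1|\dotsb|A_m$ in standard notation, let $a_{ij}$ be the $j$th smallest element of $A_i$ (defined for $1\leq j\leq |A_i|$), and let $M$ be the initial array with $M_{ij}=a_{ij}$, so that $\T(\AA)$ is obtained from $M$ by sorting and top-justifying columns.

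First I would dispose of the support statement. Since $\AA$ is a partition of $S$, each element of $S$ lies in exactly one block $A_i$, hence equals $a_{ij}$ for exactly one pair $(i,j)$; as $\T(\AA)$ merely rearranges the entries of $M$ within columns, it contains every element of $S$ exactly once and nothing else. This gives $\supp(\T(\AA))=S$ and at the same time the ``each element occurs at most once'' clause. Next, for the shape: the $j$th column of $M$ has $\#\{i:|A_i|\geq j\}$ cells, which is $\lambda'_j$ where $\lambda'$ is the conjugate of the shape $\lambda$ of $\AA$; top-justifying places these $\lambda'_j$ entries in rows $1,\dotsc,\lambda'_j$, and since $\lambda'_1\geq\lambda'_2\geq\dotsb$ the occupied cells form the Young diagram with column lengths $\lambda'$, i.e.\ that of $\lambda$. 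In particular row $i$ of $\T(\AA)$ occupies precisely columns $1,\dotsc,\lambda_i$, so $\T(\AA)$ has shape $\lambda$. The ``columns increase from top to bottom'' property is immediate, since sorting each column into increasing order is exactly the second step of the construction.

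The crux is ``rows increase from left to right'', i.e.\ that for every $i$ and every $j$ with $j+1\leq\lambda_i$ the $i$th smallest entry of column $j$ of $M$ is strictly less than the $i$th smallest entry of column $j+1$. The key point is that every block contributing an entry to column $j+1$ (one with $|A_i|\geq j+1$) also contributes to column $j$, and its column-$j$ entry is strictly smaller because the elements of a block are listed in increasing order. Concretely, list the entries of column $j+1$ of $M$ in increasing order as $c_1<c_2<\dotsb<c_r$ with $r=\lambda'_{j+1}$, say $c_l=a_{i_l,j+1}$; then $a_{i_l,j}<c_l$ for each $l$, and $a_{i_1,j},\dotsc,a_{i_r,j}$ are $r$ distinct entries of column $j$. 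Fixing $i\leq r$, the $i$ distinct entries $a_{i_1,j},\dotsc,a_{i_i,j}$ of column $j$ are all $\leq c_i$ and in fact $<c_i$ (since $a_{i_l,j}<c_l\leq c_i$ for $l\leq i$), so the $i$th smallest entry of column $j$ is strictly less than $c_i$, the $i$th smallest entry of column $j+1$. After top-justification this says exactly $\T(\AA)_{ij}<\T(\AA)_{i,j+1}$.

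This last step — the counting argument showing that at least $i$ entries of column $j$ lie below the $i$th smallest entry of column $j+1$ — is the only place where more than routine bookkeeping is needed, and it is where I would take the most care; the support, shape, and column-monotonicity claims are direct from the definitions.
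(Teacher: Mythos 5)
Your proof is correct, but it takes a genuinely different route from the paper's. You prove the row-increase property by a direct domination/counting argument: each entry $c_l=a_{i_l,j+1}$ of column $j+1$ is matched to the strictly smaller entry $a_{i_l,j}$ of column $j$ lying in the same block, these matched entries are distinct, and hence at least $i$ entries of column $j$ lie strictly below the $i$th smallest entry of column $j+1$; after sorting, this is exactly $\T(\AA)_{ij}<\T(\AA)_{i,j+1}$. The paper instead inducts on $|S|$: removing the largest element $n$ of $S$ from its block shows that $\T(\AA)$ is obtained from $\T(\tilde\AA)$ by appending $n$ to the bottom of a column, and since $n$ exceeds every other entry the rows remain increasing. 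The paper's induction is shorter and exploits the special role of the maximum; your argument is non-inductive, self-contained, and isolates the general principle (an injection from column $j+1$ into column $j$ that decreases values forces row-increase after sorting both columns), which is the standard lemma behind "column-sorting a row-compatible array yields a tableau." The one step you flag as needing care --- that the $i$ matched entries are distinct and all less than $c_i$ --- is indeed the crux, and your justification of it (disjointness of blocks gives distinctness, and $a_{i_l,j}<c_l\leq c_i$ for $l\leq i$ gives the bound) is sound. Your handling of support, shape, and column-monotonicity matches the paper's.
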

\begin{proof}
  The number of elements in the $j$th column of $T(\AA)$ is clearly the number of blocks of $\AA$ whose length is greater than or equal to $j$.
  Therefore the shape of $\T(\AA)$ is $\lambda$.
  The columns of $\T(\AA)$ are increasing by construction, and its entries are precisely the elements of $S$.
  To see that the rows are increasing, proceed by induction on the cardinality of $S$.
  When $|S|=1$, the result is clear.
  Suppose that $|S|>1$.
  Let $n$ be the largest element of $S$ and let $S'=S-\{n\}$.
  Each partition $\AA$ of $S$ is obtained by adding $n$ to one of the blocks of a partition $\tilde\AA$ of $S'$.
  Therefore $\T(\AA)$ is obtained from $\T(\tilde\AA)$ by appending $n$ to the end of one of its columns.
  Since $n$ is larger than every entry of $\T(\tilde\AA)$ it must be greater than all the elements in its row.
  Therefore $\T(\AA)$ also has increasing rows.
\end{proof}
Definition~\ref{defn:T} gives rise to a function
\begin{displaymath}
  \T:\Pi_S(\lambda) \to \Tab_S(\lambda).
\end{displaymath}
It is easy to see that $\T$ maps $\Pi_S(\lambda)$ surjectively onto $\Tab_S(\lambda)$: given $T\in \Tab_S(\lambda)$ simply take $\AA$ to be the partition of $S$ whose blocks are the rows of $T$.
Then $\T(\AA)=T$.
For each $T\in \Tab_S(\lambda)$, let
\begin{displaymath}
  \Pi(T) = \{\AA\in \Pi_S(\lambda) \mid \T(\AA)=T\},
\end{displaymath}
the fibre of $\T$ over $T$.
Our next goal is to understand the set $\Pi(T)$ for each multilinear tableau $T$.
\begin{example}
  \label{example:tableau}
  When $T=\ytableaushort{129,36,58}$, the set partitions in $\Pi(T)$ are
  \begin{displaymath}
    129|38|56,\; 129|36|58,\; 12|369|58,\; 12|38|569,\; 12|36|589,\; 12|389|56.
  \end{displaymath}
\end{example}
\subsection{Set partitions associated to a given tableau}
Suppose that $S\subset \nat$ is finite, and $\lambda$ is an integer partition of $|S|$.

\begin{definition}
  Let $T$ be a multilinear tableau of shape $\lambda=(\lambda_1,\dotsc,\lambda_m)$.
  For $1\leq i\leq m$ and $2\leq j\leq \lambda_i$ define
  \begin{displaymath}
    c_{ij}(T) = \#\{i'\mid i'\geq i \mbox{ and } T_{i',j-1}<T_{ij}\}.
  \end{displaymath}
  Define
  \begin{displaymath}
    c(T) = \prod_{i=1}^m \prod_{j=2}^{\lambda_i} c_{ij}(T).
  \end{displaymath}
\end{definition}
\begin{example}\label{eg:ct}
  For the tableau $T$ of Example~\ref{example:tableau}, the values of $c_{ij}(T)$ for each relevant cell are shown below:

  \begin{displaymath}
    \ytableaushort{{}13,{}2,{}1},
  \end{displaymath}
  and $c(T)=6$.
\end{example}
\begin{theorem}
  \label{theorem:fibres}
  For every multilinear tableau $T$, the cardinality of $\Pi(T)$ is $c(T)$.
\end{theorem}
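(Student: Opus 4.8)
\subsection*{Proof proposal}

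The plan is to induct on $|S|$, equivalently on the largest entry $n$ of $T$, by peeling off the cell of $T$ that contains the maximum. When $|S|\le 1$ there is a unique multilinear tableau and a unique set partition, and $c(T)$ is an empty product, so both sides equal $1$. For the inductive step, let $n=\max\supp T$ and let $(r,j)$ be the cell of $T$ containing $n$. Since $n$ exceeds every other entry and the rows and columns of $T$ increase, $(r,j)$ has no cell below it and no cell to its right, so it is a removable corner of $\lambda$ and $r=\lambda'_j$. Let $T'$ be the multilinear tableau obtained from $T$ by deleting this corner; then $\supp T'=S\setminus\{n\}=:S'$, and $T'$ has shape $\mu$, where $\mu$ is $\lambda$ with its corner $(r,j)$ removed.

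The first step is a deletion/insertion dictionary. For $\AA\in\Pi_S(\lambda)$, let $\tilde\AA\in\Pi_{S'}$ be obtained by removing $n$ from whichever block of $\AA$ contains it. The key claim is: $\T(\AA)=T$ if and only if $n$ is the last element of a block of size exactly $j$ in $\AA$ (equivalently, $n$ was inserted into a block of size $j-1$ of $\tilde\AA$, or formed a new singleton when $j=1$) and $\T(\tilde\AA)=T'$. The ``only if'' direction follows because the $j$th smallest element of a block of size $j$ occupies column $j$ of the array of Definition~\ref{defn:T}, and, being the global maximum, it sinks to the bottom of column $j$ after sorting, hence to the cell $(\lambda'_j,j)=(r,j)$; moreover deleting $n$ from $\AA$ leaves the pre-sorted array of $\tilde\AA$ equal to that of $\AA$ with this one cell removed, so $\T(\tilde\AA)$ is $T$ with its corner deleted. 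Conversely, appending $n$ to a block of size $j-1$ of any $\tilde\AA\in\Pi(T')$ introduces no entry in columns $>j$, leaves columns $<j$ unchanged, and in column $j$ it inserts the maximal value, which after re-sorting lands exactly in the vacated corner cell; hence $\T(\AA)=T$. This produces a bijection
\begin{displaymath}
  \Pi(T)\;\cong\;\bigsqcup_{\tilde\AA\in\Pi(T')}\{B\in\tilde\AA : |B|=j-1\}.
\end{displaymath}

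The second step is to match the two counts. Every $\tilde\AA\in\Pi(T')$ has shape $\mu$, so its number of blocks of size $j-1$ is the constant $\mu'_{j-1}-\mu'_j=\lambda'_{j-1}-\lambda'_j+1$ when $j\ge 2$, and is $1$ when $j=1$; thus $|\Pi(T)|=|\Pi(T')|\cdot(\lambda'_{j-1}-\lambda'_j+1)$, respectively $|\Pi(T)|=|\Pi(T')|$. On the tableau side, since $T_{rj}=n$ exceeds every entry $T_{i',j-1}$ that exists (i.e.\ every $i'$ with $r\le i'\le\lambda'_{j-1}$), one gets $c_{rj}(T)=\lambda'_{j-1}-\lambda'_j+1$ for $j\ge 2$; and deleting the corner $(r,j)$ changes no other factor $c_{i'j'}$, because the maximal value $n$ can never be the compared entry $T_{i'',j'-1}$ in any $c_{i'j'}$. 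Hence $c(T)=c(T')\cdot c_{rj}(T)$ with the convention $c_{rj}(T)=1$ when $j=1$, which is precisely $|\Pi(T)|/|\Pi(T')|$. The inductive hypothesis $|\Pi(T')|=c(T')$ then gives $|\Pi(T)|=c(T)$.

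The main obstacle is the verification in the first step that inserting $n$ into a block of size $j-1$ recovers $T$ exactly rather than some other tableau of the same shape: one must track precisely how the ``sort and top-justify the columns'' operation of Definition~\ref{defn:T} interacts with appending the global maximum, checking that only column $j$ is affected and that $n$ is placed in the corner cell. Everything there, and the constancy of block-size counts over a fibre, rests on $n$ being larger than every element of $S'$; the remainder is bookkeeping with conjugate partitions.
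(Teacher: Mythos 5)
Your proof is correct, but it takes a genuinely different route from the paper's. The paper inducts on the number of columns of $T$: it strips the entire first column and builds a bijection $\Phi:\Pi(T)\to M(C,\tilde C)\times\Pi(\tilde T)$, where $M(C,\tilde C)$ is the set of maps $\alpha:\tilde C\to C$ with $\alpha(c)<c$, then counts $|M(C,\tilde C)|=\prod_i c_{i2}(T)$ by choosing the values $\alpha(\tilde c_1),\dotsc,\alpha(\tilde c_l)$ sequentially. You instead induct on $|S|$ by peeling off the single cell containing the maximal entry $n$, showing that the fibre over $T$ decomposes as $\bigsqcup_{\tilde\AA\in\Pi(T')}\{B\in\tilde\AA:|B|=j-1\}$ and that the number of blocks of size $j-1$ is constant over the fibre (it depends only on the shape $\mu$, giving $\mu'_{j-1}-\mu'_j=\lambda'_{j-1}-\lambda'_j+1=c_{rj}(T)$). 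All the delicate points you flag --- that appending the global maximum to a block of size $j-1$ perturbs only column $j$ of the sorted array and lands $n$ in the vacated corner, that the standard block ordering is unaffected, and that no other factor $c_{i'j'}$ changes because $n$ never satisfies the inequality $T_{i'',j'-1}<T_{i'j'}$ --- check out. Your recursion is in fact the $q=1$ shadow of the paper's later Theorem~\ref{theorem:rec-bla}, since $[\mu_{j-1}-(i-1)]_q$ evaluates at $q=1$ to exactly your block count. What the paper's column decomposition buys, and yours does not, is reusability: the same bijection $\Phi$ drives the generation algorithm (Algorithm~\ref{algorithm:generator}) and, crucially, the $q$-refinement in Theorem~\ref{theorem:statistic}, where the interlacing statistic splits additively as $v(\AA)=w(\alpha)+v(\tilde\AA)$ along the column decomposition; that statistic does not decompose as cleanly under removal of the largest element, since deleting $n$ reconfigures the arcs of its block. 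What your approach buys is a more elementary, purely local argument that avoids introducing the auxiliary set $M(C,\tilde C)$.
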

\begin{proof}
  Induct on $k$, the number of columns of $T$.
  If $k=1$, then $c(T)$ is the empty product, equal to $1$.
  There is only one set partition with $\T(\AA)=T$, namely the partition with singleton blocks.
  Therefore the theorem holds for $k=1$.

  Let $C$ denote the set of entries of the first column of $T$, let $\tilde T$ denote the tableau obtained by deleting the first column of $T$, and let $\tilde C$ denote the set of entries of the first column of $\tilde T$ (the same as the second column of $T$).
  Given $\AA$ such that $\T(\AA)=T$, let $\tilde\AA$ denote the partition obtained from $\AA$ by removing the least element of each block (and discarding the empty blocks that result).
  Then $\T(\tilde\AA)=\tilde T$.
  Thus $\AA\mapsto \tilde\AA$ gives rise to a function $\phi:\Pi(T)\to \Pi(\tilde T)$.

  Let
  \begin{displaymath}
    M(C,\tilde C):=\{\alpha: \tilde C\to C \mid \alpha \text{ injective with }\alpha(\tilde c)<\tilde c \text{ for each }\tilde c\in \tilde C\}.
  \end{displaymath}
  Given $\AA\in \phi^{-1}(\tilde\AA)$, define $\alpha\in M(C,\tilde C)$ by setting $\alpha(\tilde c)=c$, where $c$ is the least element in the block of $\AA$ containing $\tilde c$.
  Define $\Phi:\Pi(T)\to M(C,\tilde C)\times \Pi(\tilde T)$ by $\Phi(\AA)=(\alpha,\phi(\AA))$.
  This construction gives rise to a bijection
  \begin{displaymath}
    \Phi:\Pi(T)\to M(C,\tilde C)\times \Pi(\tilde T).
  \end{displaymath}
  Therefore $\phi^{-1}(\tilde\AA)$ is in bijective correspondence with $M(C,\tilde C)$.

  Suppose that $\tilde C=(\tilde c_1,\tilde c_2,\dotsc,\tilde c_m)$ in increasing order.
  For $\alpha(\tilde c_1)$, there are $c_{12}(T)$ choices.
  Having made that choice, there are precisely $c_{22}(T)$ choices for $\alpha(\tilde c_2)$.
  Proceeding in this manner, we see that there are $\prod_{\{i\mid \lambda_i\geq 2\}}c_{i2}(T)$ possibilities for $\alpha$.
  It follows that
  \begin{displaymath}
    |\Pi(T)| = |\Pi(\tilde T)| \prod_{\{i\mid \lambda_i\geq 2\}}c_{i2}(T) = c(\tilde T) \prod_{\{i\mid \lambda_i\geq 2\}}c_{i2}(T)=c(T),
  \end{displaymath}
  by induction.
\end{proof}
\subsection{Counting set partitions using tableaux}
\label{sec:setstotableaux}
Recall that, for every partition $\lambda$ of $n$, $\Pi_n(\lambda)$ denotes the set of partitions of $[n]$ of shape $\lambda$. 
\begin{corollary}
  \label{corollary:pinla}
Let $n$ be a nonnegative integer.
  \begin{enumerate}
\item  For each integer partition $\lambda$ of $n$,
  \begin{equation}
    \label{eq:pinlam}
    |\Pi_{n}(\lambda)| = \sum_{T\in \Tab_{[n]}(\lambda)}c(T).
  \end{equation}
\item For each nonnegative integer $m$, the Stirling number of the second kind is given by
$$
    S(n,m)=\sum_{\substack{\lambda\vdash n\\l(\lambda)=m}}\sum_{T\in \Tab_{[n]}(\lambda)}c(T).
$$
\item   For every positive integer $n$, the number of partitions of $[n]$ is given by
  \begin{displaymath}
    B_n = \sum_{T\in \Tab_{[n]}} c(T),
  \end{displaymath}
  where $\Tab_{[n]}$ denotes the set of standard tableaux with $n$ cells.
  \end{enumerate}
\end{corollary}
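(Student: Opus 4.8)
The plan is to deduce all three parts directly from Theorem~\ref{theorem:fibres} by partitioning the relevant classes of set partitions according to their associated multilinear tableau. For part (1), the key observation is that, when $S=[n]$, the map $\T:\Pi_n(\lambda)\to\Tab_{[n]}(\lambda)$ is surjective (as noted just after Definition~\ref{defn:T}, the partition whose blocks are the rows of a given $T$ maps to $T$), so $\Pi_n(\lambda)$ is the disjoint union of the fibres $\Pi(T)$ as $T$ ranges over $\Tab_{[n]}(\lambda)$. Taking cardinalities and invoking Theorem~\ref{theorem:fibres}, which gives $|\Pi(T)|=c(T)$, yields
\begin{displaymath}
  |\Pi_n(\lambda)| = \sum_{T\in\Tab_{[n]}(\lambda)} |\Pi(T)| = \sum_{T\in\Tab_{[n]}(\lambda)} c(T).
\end{displaymath}

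For part (2), I would use that the Stirling number of the second kind $S(n,m)$ counts all partitions of $[n]$ into exactly $m$ blocks, and that such a partition has a shape $\lambda$ which is an integer partition of $n$ with exactly $m$ parts; hence $S(n,m)=\sum_{\lambda\vdash n,\,l(\lambda)=m}|\Pi_n(\lambda)|$, and substituting the formula from part (1) gives the claimed double sum. For part (3), I would similarly note that $B_n=\sum_{\lambda\vdash n}|\Pi_n(\lambda)|$ and that $\Tab_{[n]}=\bigsqcup_{\lambda\vdash n}\Tab_{[n]}(\lambda)$, so summing the identity of part (1) over all $\lambda\vdash n$ collapses the double sum into a single sum over $\Tab_{[n]}$.

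There is no real obstacle here; the entire content has already been extracted into Theorem~\ref{theorem:fibres}, and what remains is the elementary bookkeeping of recognizing $|\Pi_n(\lambda)|$, $S(n,m)$, and $B_n$ as cardinalities of set-partition classes that decompose along shapes and fibres. The only point requiring a word of care is that the surjectivity of $\T$ onto $\Tab_{[n]}(\lambda)$ (not merely injectivity of fibres) is what makes the union over $T$ exhaust $\Pi_n(\lambda)$, but this was established before the statement. Degenerate cases ($n=0$, or $m$ outside the range $0\le m\le n$) are handled by the usual empty-sum conventions and need only a brief remark.
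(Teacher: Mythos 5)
Your proposal is correct and matches the paper's (implicit) argument exactly: the paper states this as an immediate corollary of Theorem~\ref{theorem:fibres}, relying on the surjectivity of $\T$ noted just before the definition of $\Pi(T)$ to decompose $\Pi_n(\lambda)$ into fibres, and then summing over shapes for parts (2) and (3). Nothing is missing.
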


 \subsection{Generation of set partitions}
\label{sec:enumeration}
The proof of Theorem~\ref{theorem:fibres} suggests the use of Algorithm~\ref{algorithm:generator} for the generation of all set partitions in $\Pi(T)$ for every multilinear tableau $T$.
By nesting Algorithm~\ref{algorithm:generator} inside a generator for $\Tab_S(\lambda)$, a generator for all set partitions with shape $\lambda$ is obtained.
\begin{Algorithm}
  \caption{Algorithm to generate elements of $\Pi(T)$}
  \label{algorithm:generator}
  \begin{itemize}
  \item \textbf{If} $T$ has only one column
    \begin{itemize}
    \item \textbf{Visit} the partition of the support of $T$ with singleton blocks and stop.
    \end{itemize}
  \item \textbf{Else}
    \begin{itemize}
    \item Let $\tilde T$ be the tableau obtained by removing the first column of $T$, $C$ and $\tilde C$ be the first columns of $T$ and $\tilde T$ respectively.
    \item \textbf{For }each injective map $\alpha: \tilde C\to C$ with $\tilde c<\alpha(\tilde c)$ and $\tilde\AA\in \Pi(\tilde T)$,
      \begin{itemize}
      \item \textbf{Visit} the partition $\AA$ obtained by adding $\alpha(\tilde c)$ to the block of $\tilde\AA$ containing $\tilde c$ for each $\tilde c\in \tilde C$.
      \end{itemize}
    \end{itemize}
  \end{itemize}
\end{Algorithm} 

\section{$q$-analogs}
\subsection{A $q$-analog of $c(T)$}
For each multilinear tableau $T$ of shape $\lambda=(\lambda_1,\dotsc,\lambda_m)$, define $c_q(T)\in \ZZ[q]$ by
\begin{equation}
  \label{eq:rq}
  c_q(T) = \prod_{i=1}^m \prod_{j=2}^{\lambda_i} [c_{ij}(T)]_q,
\end{equation}
where, for each positive integer $n$, $[n]_q=1+q+\dotsb+q^{n-1}$, the $q$-analog of $n$.
Clearly $c_q(T)$ is a polynomial with nonnegative integer coefficients, and substituting $q=1$ gives
\begin{displaymath}
  c_q(T)|_{q=1} = c(T),
\end{displaymath}
the cardinality of $\Pi(T)$.
\subsection{The polynomials $b_\lambda(q)$}
\label{sec:polyn-b_lambd}
For each integer partition $\lambda$ and each positive integer $n$, define
\begin{equation}
  \label{eq:bla}
  b_\lambda^n(q) = \sum_{T\in \Tab_{\subset [n]}(\lambda)} c_q(T).
\end{equation}
Write $b_\lambda(q)$ for $b_\lambda^n(q)$ when $\lambda$ is a partition of $n$. Every order preserving relabeling of the entries of $T$ leaves $c_q(T)$ invariant. Therefore,
$$
b_\lambda^n(q)={n \choose |\lambda|}b_\lambda(q).
$$
By Corollary~\ref{corollary:pinla}, we have
\begin{displaymath}
  b_\lambda(1) = |\Pi_{n}(\lambda)|,
\end{displaymath}
and since, for every standard tableau $T,$ $c_q(T)$ is a product of $q$-integers, we have
\begin{displaymath}
  b_\lambda(0) = |\Tab_{[n]}(\lambda)|.
\end{displaymath}
\subsection{The interlacing statistic}  
\label{sec:interl-stat}
Let $\nat^*$ denote the ordered set $\nat \cup \{\infty\}$, where $\infty$ is deemed to be greater than every element of $\nat$.
\begin{definition}[Crossing arcs]
  For $a,b,c,d\in \nat^*$, we say that the arcs $(a,b)$ and $(c,d)$ cross if the intervals $[a,b]$ and $[c,d]$ are neither nested, nor disjoint.
  In other words:
  \begin{gather*}
    \text{either } a<c<b<d, \text{ or } c<a<d<b.
  \end{gather*}
\end{definition}
\begin{figure}[h]
  \begin{center}
    \begin{minipage}{\textwidth}
      \centering
      \begin{tikzpicture}
        [scale=1,every node/.style={circle,fill=black,inner sep=2pt, minimum size=6pt}]
        \node[label=below:$a$] (1) at (1,0) {};
        \node[label=below:$c$] (2) at (2,0) {};
        \node[label=below:$d$] (3) at (3,0) {};
        \node[label=below:$b$] (4) at (4,0) {};
        \draw[thick]
        (1) [out=45, in=135] to  (4);
        \draw[thick]
        (2) [out=45, in=135] to  (3);
      \end{tikzpicture}
      \quad
      \centering
      \begin{tikzpicture}
        [scale=1,every node/.style={circle,fill=black,inner sep=2pt, minimum size=6pt}]
        \node[label=below:$a$] (1) at (1,0) {};
        \node[label=below:$c$] (2) at (2,0) {};
        \node[label=below:$b$] (3) at (3,0) {};
        \node[label=below:$d$] (4) at (4,0) {};
        \draw[thick]
        (1) [out=45, in=135] to  (3);
        \draw[thick]
        (2) [out=45, in=135] to  (4);
      \end{tikzpicture}\\
      \centering
      \begin{tikzpicture}
        [scale=1,every node/.style={circle,fill=black,inner sep=2pt, minimum size=6pt}]
        \node[label=below:$a$] (1) at (1,0) {};
        \node[label=below:$b$] (2) at (2,0) {};
        \node[label=below:$c$] (3) at (3,0) {};
        \node[label=below:$d$] (4) at (4,0) {};
        \draw[thick]
        (1) [out=45, in=135] to  (2);
        \draw[thick]
        (3) [out=45, in=135] to  (4);
      \end{tikzpicture}
      \quad
      \centering
      \begin{tikzpicture}
        [scale=1,every node/.style={circle,fill=black,inner sep=2pt, minimum size=6pt}]
        \node[label=below:$c$] (1) at (1,0) {};
        \node[label=below:$a$] (2) at (2,0) {};
        \node[label=below:$d$] (3) at (3,0) {};
        \node[label=below:$b$] (4) at (4,0) {};
        \draw[thick]
        (1) [out=45, in=135] to  (3);
        \draw[thick]
        (2) [out=45, in=135] to  (4);
      \end{tikzpicture}
      \caption{Left: noncrossing arcs, Right: crossing arcs.}
      \label{fig:interlacing-non-interlacing}
    \end{minipage}
  \end{center}
\end{figure}
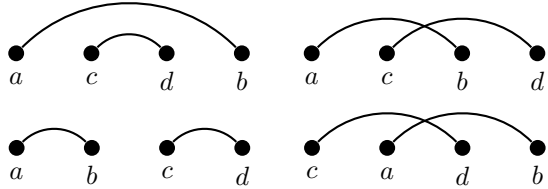
In Figure~\ref{fig:interlacing-non-interlacing}, arcs $(a,b)$ and $(c,d)$ do not cross in the diagrams on the left, and do cross in the diagrams on the right.
\begin{definition}
  [The arcs of a set]
  Given a set $A\subset \nat$ whose elements are $a_1,\dotsc,a_l$ in increasing order, its $j$th arc is the pair $\arc_j(A) = (a_j,a_{j+1})$ for $j=1,\dotsc,l-1$, and its $l$th arc is $\arc_l(A) = (a_l,\infty)$.
\end{definition}
\begin{definition}[Interlacing]
  \label{definition:interlacing}
  Let $S$ be any finite subset of $\nat$.
  Let $\AA = A_1|\dotsb|A_m\in \Pi_S$ with $|A_i|=l_i$.
  An \emph{interlacing} of $\AA$ is a pair $(\arc_j(A_i)$, $\arc_j(A_{i'}))$ of crossing arcs  for some $1\leq i<i'\leq m$ and some $1\leq j\leq \min(l_i,l_{i'})$.
  Let $v(\AA)$ denote the total number of interlacings of the set partition $\AA$, called the \emph{interlacing number} of $\AA$.
\end{definition}
\begin{remark}
  The notion of interlacing for a set partition appears to be similar to the notion of \emph{crossing} defined by Kreweras \cite{Kreweras}.
  For set partitions where all blocks are of size two, interlacings coincide with crossings.
  However, in general, a set partition can be noninterlacing without being noncrossing, and vice versa.
  For example $13|2$ is noncrossing, but has interlacing arcs $(1,3)$ and $(2,\infty)$.
  On the other hand, $124|35$ has interlacing number zero but is not noncrossing.
  The arcs $(2,4)$ and $(3,5)$ do cross, but $(2,4)$ is a second arc whereas $(3,5)$ is a first arc.
\end{remark} 
\begin{table}
  \begin{center}
    \begin{tabular}{ccc}
      $\AA$ & Arcs of $\AA$ & $v(\AA)$\\
      \hline
      $129|38|56$ & 
                    \begin{minipage}{0.75\textwidth}
                      \begin{tikzpicture}
                        [scale=0.7,every node/.style={circle,fill=black,inner sep=2pt, minimum size=6pt}]
                        \node[label=below:$1$] (1) at (1,0) {};
                        \node[label=below:$2$] (2) at (2,0) {};
                        \node[label=below:$3$] (3) at (3,0) {};
                        \node[label=below:$5$] (5) at (5,0) {};
                        \node[label=below:$6$] (6) at (6,0) {};
                        \node[label=below:$8$] (8) at (8,0) {};
                        \node[label=below:$9$] (9) at (9,0) {};
                        \node[label=below:$\infty$] (0) at (10,0) {};
                        \draw[thick,color=teal]
                        (1) [out=45, in=135] to  (2);
                        \draw[thick,color=orange]
                        (2) [out=45, in=135] to  (9);
                        \draw[thick,color=teal]
                        (3) [out=45, in=135] to  (8);
                        \draw[thick,color=teal]
                        (5) [out=45, in=135] to  (6);
                        \draw[thick,color=orange]
                        (6) [out=45, in=135] to  (0);
                        \draw[thick,color=orange]
                        (8) [out=45, in=135] to  (0);
                        \draw[thick,color=blue]
                        (9) [out=45, in=135] to  (0);
                      \end{tikzpicture}
                    \end{minipage}
                            &  2\\
      $129|36|58$&
                   \begin{minipage}{0.75\textwidth}
                     \begin{tikzpicture}
                       [scale=0.7,every node/.style={circle,fill=black,inner sep=2pt, minimum size=6pt}]
                       \node[label=below:$1$] (1) at (1,0) {};
                       \node[label=below:$2$] (2) at (2,0) {};
                       \node[label=below:$3$] (3) at (3,0) {};
                       \node[label=below:$5$] (5) at (5,0) {};
                       \node[label=below:$6$] (6) at (6,0) {};
                       \node[label=below:$8$] (8) at (8,0) {};
                       \node[label=below:$9$] (9) at (9,0) {};
                       \node[label=below:$\infty$] (0) at (10,0) {};
                       \draw[thick,color=teal]
                       (1) [out=45, in=135] to  (2);
                       \draw[thick,color=orange]
                       (2) [out=45, in=135] to  (9);
                       \draw[thick,color=teal]
                       (3) [out=45, in=135] to  (6);
                       \draw[thick,color=teal]
                       (5) [out=45, in=135] to  (8);
                       \draw[thick,color=orange]
                       (6) [out=45, in=135] to  (0);
                       \draw[thick,color=orange]
                       (8) [out=45, in=135] to  (0);
                       \draw[thick,color=blue]
                       (9) [out=45, in=135] to  (0);
                     \end{tikzpicture}
                   \end{minipage}
                            & $3$\\
      $12|369|58$ &
                    \begin{minipage}{0.75\textwidth}
                      \begin{tikzpicture}
                        [scale=0.7,every node/.style={circle,fill=black,inner sep=2pt, minimum size=6pt}]
                        \node[label=below:$1$] (1) at (1,0) {};
                        \node[label=below:$2$] (2) at (2,0) {};
                        \node[label=below:$3$] (3) at (3,0) {};
                        \node[label=below:$5$] (5) at (5,0) {};
                        \node[label=below:$6$] (6) at (6,0) {};
                        \node[label=below:$8$] (8) at (8,0) {};
                        \node[label=below:$9$] (9) at (9,0) {};
                        \node[label=below:$\infty$] (0) at (10,0) {};
                        \draw[thick,color=teal]
                        (1) [out=45, in=135] to  (2);
                        \draw[thick,color=teal]
                        (3) [out=45, in=135] to  (6);
                        \draw[thick,color=orange]
                        (6) [out=45, in=135] to  (9);
                        \draw[thick,color=teal]
                        (5) [out=45, in=135] to  (8);
                        \draw[thick,color=orange]
                        (2) [out=45, in=135] to  (0);
                        \draw[thick,color=orange]
                        (8) [out=45, in=135] to  (0);
                        \draw[thick,color=blue]
                        (9) [out=45, in=135] to  (0);
                      \end{tikzpicture}
                    \end{minipage}
                            & $2$\\
      $12|38|569$&
                   \begin{minipage}{0.75\textwidth}
                     \begin{tikzpicture}
                       [scale=0.7,every node/.style={circle,fill=black,inner sep=2pt, minimum size=6pt}]
                       \node[label=below:$1$] (1) at (1,0) {};
                       \node[label=below:$2$] (2) at (2,0) {};
                       \node[label=below:$3$] (3) at (3,0) {};
                       \node[label=below:$5$] (5) at (5,0) {};
                       \node[label=below:$6$] (6) at (6,0) {};
                       \node[label=below:$8$] (8) at (8,0) {};
                       \node[label=below:$9$] (9) at (9,0) {};
                       \node[label=below:$\infty$] (0) at (10,0) {};
                       \draw[thick,color=teal]
                       (1) [out=45, in=135] to  (2);
                       \draw[thick,color=teal]
                       (5) [out=45, in=135] to  (6);
                       \draw[thick,color=orange]
                       (6) [out=45, in=135] to  (9);
                       \draw[thick,color=teal]
                       (3) [out=45, in=135] to  (8);
                       \draw[thick,color=orange]
                       (2) [out=45, in=135] to  (0);
                       \draw[thick,color=orange]
                       (8) [out=45, in=135] to  (0);
                       \draw[thick,color=blue]
                       (9) [out=45, in=135] to  (0);
                     \end{tikzpicture}
                   \end{minipage}
                            & $1$\\
      $12|36|589$ &
                    \begin{minipage}{0.75\textwidth}
                      \begin{tikzpicture}
                        [scale=0.7,every node/.style={circle,fill=black,inner sep=2pt, minimum size=6pt}]
                        \node[label=below:$1$] (1) at (1,0) {};
                        \node[label=below:$2$] (2) at (2,0) {};
                        \node[label=below:$3$] (3) at (3,0) {};
                        \node[label=below:$5$] (5) at (5,0) {};
                        \node[label=below:$6$] (6) at (6,0) {};
                        \node[label=below:$8$] (8) at (8,0) {};
                        \node[label=below:$9$] (9) at (9,0) {};
                        \node[label=below:$\infty$] (0) at (10,0) {};
                        \draw[thick,color=teal]
                        (1) [out=45, in=135] to  (2);
                        \draw[thick,color=teal]
                        (3) [out=45, in=135] to  (6);
                        \draw[thick,color=orange]
                        (8) [out=45, in=135] to  (9);
                        \draw[thick,color=teal]
                        (5) [out=45, in=135] to  (8);
                        \draw[thick,color=orange]
                        (2) [out=45, in=135] to  (0);
                        \draw[thick,color=orange]
                        (6) [out=45, in=135] to  (0);
                        \draw[thick,color=blue]
                        (9) [out=45, in=135] to  (0);
                      \end{tikzpicture}
                    \end{minipage}
                            &$1$\\
      $12|389|56$ &
                    \begin{minipage}{0.75\textwidth}
                      \begin{tikzpicture}
                        [scale=0.7,every node/.style={circle,fill=black,inner sep=2pt, minimum size=6pt}]
                        \node[label=below:$1$] (1) at (1,0) {};
                        \node[label=below:$2$] (2) at (2,0) {};
                        \node[label=below:$3$] (3) at (3,0) {};
                        \node[label=below:$5$] (5) at (5,0) {};
                        \node[label=below:$6$] (6) at (6,0) {};
                        \node[label=below:$8$] (8) at (8,0) {};
                        \node[label=below:$9$] (9) at (9,0) {};
                        \node[label=below:$\infty$] (0) at (10,0) {};
                        \draw[thick,color=teal]
                        (1) [out=45, in=135] to  (2);
                        \draw[thick,color=teal]
                        (5) [out=45, in=135] to  (6);
                        \draw[thick,color=orange]
                        (8) [out=45, in=135] to  (9);
                        \draw[thick,color=teal]
                        (3) [out=45, in=135] to  (8);
                        \draw[thick,color=orange]
                        (2) [out=45, in=135] to  (0);
                        \draw[thick,color=orange]
                        (6) [out=45, in=135] to  (0);
                        \draw[thick,color=blue]
                        (9) [out=45, in=135] to  (0);
                      \end{tikzpicture}
                    \end{minipage}
                            & $0$\\
      \hline
    \end{tabular}
  \end{center}

  \caption{Statistics for set partitions corresponding to $\ytableaushort{129,36,58}$}
  \label{tab:statistics}
\end{table}
Table~\ref{tab:statistics} shows the arcs and the number of interlacings for the set partitions in Example~\ref{example:tableau}.
The first, second, and third arcs are shown in different colours.
Only crossing arcs of the same colour contribute to the interlacing number.

The following theorem gives an enumerative interpretation of the coefficients of $c_q(T)$.
\begin{theorem}
  \label{theorem:statistic}
  For every multilinear tableau $T$,
  \begin{equation}
    \label{eq:rq-sp}
    c_q(T) = \sum_{\T(\AA)=T} q^{v(\AA)}.
  \end{equation}
\end{theorem}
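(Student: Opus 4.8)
The plan is to refine the inductive proof of Theorem~\ref{theorem:fibres} so that it keeps track of the interlacing number, inducting on the number $k$ of columns of $T$. When $T$ has only one column, the only set partition in $\Pi(T)$ is the one with singleton blocks; every block then contributes a single arc of the form $(a,\infty)$, no two such arcs cross, so the interlacing number is $0$ and \eqref{eq:rq-sp} reads $1=q^{0}$.

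For the inductive step I will keep the notation of the proof of Theorem~\ref{theorem:fibres}: $\tilde T$ is $T$ with its first column removed, $C$ and $\tilde C$ are the entry sets of the first columns of $T$ and $\tilde T$, and $\Phi\colon\Pi(T)\to M(C,\tilde C)\times\Pi(\tilde T)$, $\AA\mapsto(\alpha,\tilde\AA)$, is the bijection constructed there. Observe first that the first arcs of $\AA$ (the arcs $\arc_1$ of its blocks) are precisely the arcs $(\alpha(\tilde c),\tilde c)$ for $\tilde c\in\tilde C$ together with the arcs $(c,\infty)$ for $c\in C\setminus\alpha(\tilde C)$; letting $X(\alpha)$ be the number of crossing pairs among them, $X(\alpha)$ depends only on $\alpha$. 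The first key step is the decomposition $v(\AA)=v(\tilde\AA)+X(\alpha)$. This holds because an interlacing of $\AA$ is a crossing pair of arcs of \emph{equal} index from two different blocks: for index $j\ge 2$, the $j$th arc of a block of $\AA$ equals the $(j-1)$st arc of the corresponding block of $\tilde\AA$, and singleton blocks of $\AA$ (which vanish in $\tilde\AA$) carry only a first arc; hence index-$\ge 2$ interlacings of $\AA$ correspond, preserving the crossing relation, to interlacings of $\tilde\AA$, while the index-$1$ interlacings are by definition counted by $X(\alpha)$.

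The second, and main, step is to prove
\[
  \sum_{\alpha\in M(C,\tilde C)} q^{X(\alpha)} \;=\; \prod_{\{i\mid \lambda_i\ge 2\}} [c_{i2}(T)]_q .
\]
I will read the first arcs as a chord diagram on the linearly ordered set $C\cup\tilde C\cup\{\infty\}$, splitting $\infty$ into $|C|-|\tilde C|$ copies placed at the very end and ordered so that they close the leftover left endpoints of $C$ in \emph{decreasing} order. Crossings are counted by the left-to-right sweep, charging each crossing of two first arcs to the arc with the smaller right endpoint: when the sweep closes a chord whose left endpoint is the $s$th smallest among those currently open, exactly (number of currently open chords)$\,-\,s$ crossings are charged there, and the copies of $\infty$ charge nothing thanks to the chosen order. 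When the sweep reaches $\tilde c_i\in\tilde C$, the open chords are exactly those whose left endpoint lies below $\tilde c_i$ and is not already matched to some $\tilde c_{i'}$ with $i'<i$; as in the proof of Theorem~\ref{theorem:fibres} there are precisely $c_{i2}(T)$ of them. Recording, for each $i$, the rank $s_i\in\{1,\dots,c_{i2}(T)\}$ of the chord closed at $\tilde c_i$ sets up a bijection $M(C,\tilde C)\cong\prod_i\{1,\dots,c_{i2}(T)\}$ under which $X(\alpha)=\sum_i\bigl(c_{i2}(T)-s_i\bigr)$, and summing $q^{X(\alpha)}$ factors as the asserted product of $q$-integers.

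Finally I assemble the pieces. Using the bijection $\Phi$, the decomposition $v(\AA)=v(\tilde\AA)+X(\alpha)$, the induction hypothesis $c_q(\tilde T)=\sum_{\tilde\AA\in\Pi(\tilde T)}q^{v(\tilde\AA)}$ (valid since $\tilde T$ has $k-1$ columns), and the factorization of $c_q$ along the first column read off from \eqref{eq:rq} (exactly the identity $c_q(T)=c_q(\tilde T)\prod_{\{i\mid\lambda_i\ge 2\}}[c_{i2}(T)]_q$, using $c_{ij}(\tilde T)=c_{i,j+1}(T)$), one gets
\[
  \sum_{\AA\in\Pi(T)}q^{v(\AA)}
  = \Bigl(\sum_{\tilde\AA\in\Pi(\tilde T)}q^{v(\tilde\AA)}\Bigr)\Bigl(\sum_{\alpha\in M(C,\tilde C)}q^{X(\alpha)}\Bigr)
  = c_q(\tilde T)\prod_{\{i\mid\lambda_i\ge 2\}}[c_{i2}(T)]_q
  = c_q(T).
\]
The main obstacle is the crossing count in the third paragraph: one must arrange the charging of crossings and the placement of the copies of $\infty$ carefully enough that the number charged at each $\tilde c_i$ is independent of the earlier choices and equals $c_{i2}(T)-s$, which is exactly the point where the interlacing combinatorics has to line up with the definition of $c_q$.
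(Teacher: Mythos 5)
Your proposal is correct and follows essentially the same route as the paper's proof: the same induction on the number of columns, the same bijection $\Phi\colon\Pi(T)\to M(C,\tilde C)\times\Pi(\tilde T)$, the same decomposition $v(\AA)=v(\tilde\AA)+X(\alpha)$ (your $X(\alpha)$ is the paper's $w(\alpha)$), and the same evaluation of $\sum_\alpha q^{X(\alpha)}$ as a product of $q$-integers by choosing $\alpha(\tilde c_1),\dots,\alpha(\tilde c_l)$ sequentially. Your "charge each crossing to the arc with the smaller right endpoint" sweep is just a more explicit bookkeeping of the paper's observation that the $r$th choice from the right leaves $r-1$ nodes that will eventually produce $r-1$ crossings.
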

\begin{proof}
  Induct on $k$, the number of columns of $T$.
  When $T$ has one column, $\Pi(T)$ has only one element, namely the partition of $S$ with singleton blocks.
  This partition has no interlacings, consistent with $c_q(T)=1$.

  Now suppose $k>1$.
  Recall the bijection $\Phi:\Pi(T)\to M(C,\tilde C) \times\Pi(\tilde T)$ from the proof of Theorem~\ref{theorem:fibres}.
  Suppose that $\Phi(\AA)=(\alpha,\tilde\AA)$.
  
  Plot the elements of $C$ and $\tilde C$ on the number line, and add a node labeled $\infty$ to the right of all these points.
  Join each $\tilde c\in \tilde C$ to $\alpha(\tilde c)$ (which lies to its left) by an arc.
  Finally, join the elements of $C-\alpha(\tilde C)$ to $\infty$ by arcs.
  Let $w(\alpha)$ denote the number of crossings of these arcs.
  Since these arcs are precisely the first arcs of $\AA$,
  \begin{equation}
    \label{eq:wv}
    v(\AA) = w(\alpha) + v(\tilde\AA).
  \end{equation}
  
  Furthermore, if $|\tilde C|=l$, then
  \begin{equation}
    \label{eq:w}
    \sum_{\alpha\in M(C,\tilde C)} q^{w(\alpha)} = \prod_{i=1}^l [c_{i2}(T)]_q.
  \end{equation}
  To see this, suppose the entries of $\tilde C$ are $\tilde c_1,\dotsc,\tilde c_l$ in increasing order and $\alpha$ is chosen by choosing $\alpha(\tilde c_1),\dotsc,\alpha(\tilde c_l)$ sequentially as in the proof of Theorem~\ref{theorem:fibres} (see Example~\ref{example:first-arcs} below).
  Having chosen $\alpha(\tilde c_1),\dotsc,\alpha(\tilde c_{i-1})$, we have $c_{i2}(T)$ choices for $\alpha(\tilde c_i)$.
  The $r$th choice from the right leaves $r-1$ unused nodes between $\alpha(\tilde c_i)$ and $\tilde c_i$.
  These nodes will eventually be joined to $\tilde c_s$ for some $s>i$ or to $\infty$, leading to $r-1$ crossings that will be counted in the statistic $w(\alpha)$.
  Thus the $c_{i2}(T)$ choices will contribute factors $1,q,\dotsc,q^{c_{i2}(T)-1}$ to $q^{w(\alpha)}$.
  Summing over all possible $\alpha$ gives (\ref{eq:w}).
  
  By (\ref{eq:wv}), (\ref{eq:w}), and the induction hypothesis,
  \begin{align*}
    \sum_{\T(\AA)=T} q^{v(\AA)} &  = \sum_{\alpha\in M(C,\tilde C)} q^{w(\alpha)} \sum_{\T(\tilde\AA)=\tilde T} q^{v(\tilde\AA)}\\
                                & = \prod_{\{i\mid \lambda_i\geq 2\}} [c_{i2}(T)]_q c_q(\tilde T)  = c_q(T),
  \end{align*}
  completing the proof of Theorem~\ref{theorem:statistic}.
\end{proof}
\begin{example}
  \label{example:first-arcs}
  Consider $C=1,2,3,6,9$ and $\tilde C=5,8,10,11$.
  Having chosen $\alpha(5)=2$ and $\alpha(8)=3$, the available choices for $\alpha(10)$ are represented by the dotted arcs in Figure~\ref{fig:choices}. From right to left, these choices are $9$, $6$, and $1$, and will eventually contribute $0$, $1$, and $2$ crossings respectively with the arc joining $\alpha(11)$ to $11$ and the arcs joining the elements of $C-\alpha(\tilde C)$ to infinity. 
  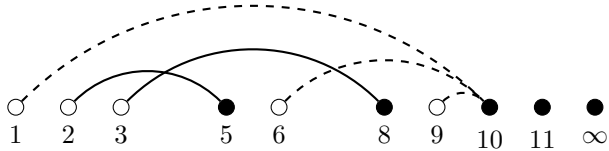
\begin{figure}[h]
    \begin{displaymath}
      \begin{tikzpicture}
        [scale=.7,every node/.style={circle,draw,inner sep=2pt, minimum size=6pt}]
        \node[label=below:$1$] (1) at (1,0) {};
        \node[label=below:$2$] (2) at (2,0) {};
        \node[label=below:$3$] (3) at (3,0) {};
        \node[label=below:$5$, fill=black] (5) at (5,0) {};
        \node[label=below:$6$] (6) at (6,0) {};
        \node[label=below:$8$, fill=black] (8) at (8,0) {};
        \node[label=below:$9$] (9) at (9,0) {};
        \node[label=below:$10$, fill=black] (10) at (10,0) {};
        \node[label=below:$11$, fill=black] (11) at (11,0) {};
        \node[label=below:$\infty$, fill=black] (0) at (12,0) {};
        \draw[thick,color=black]
        (2) [out=45, in=135] to  (5);
        \draw[thick,color=black]
        (3) [out=45, in=135] to  (8);
        \draw[thick,color=black, style=dashed]
        (9) [out=45, in=135] to  (10);
        \draw[thick,color=black, style=dashed]
        (6) [out=45, in=135, color=black, style=dashed] to  (10);
        \draw[thick,color=black, style=dashed]
        (1) [out=45, in=135] to  (10);
      \end{tikzpicture}
    \end{displaymath}
    \caption{Choices for $\alpha(10)$.}
    \label{fig:choices}
  \end{figure}
\end{example}

\begin{corollary}
  \label{cor:noninterl}
For each partition $\lambda$ of $n$, noninterlacing partitions of shape $\lambda$ are in bijection with standard tableaux of shape $\lambda$. Therefore noninterlacing partitions of $[n]$ are in bijection with involutions in the symmetric group $S_n$. 
\end{corollary}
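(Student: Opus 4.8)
The plan is to obtain the statement by specialising Theorem~\ref{theorem:statistic} at $q=0$. The first observation is that $c_{ij}(T)\geq 1$ for every multilinear tableau $T$ and every relevant pair $(i,j)$: since the rows of $T$ are increasing we have $T_{i,j-1}<T_{ij}$, so the index $i'=i$ already lies in the set whose cardinality is $c_{ij}(T)$. Hence each factor $[c_{ij}(T)]_q=1+q+\dotsb+q^{c_{ij}(T)-1}$ appearing in the product~\eqref{eq:rq} has constant term $1$, and therefore $c_q(T)\big|_{q=0}=1$ for every $T$.

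Next I would feed this into Theorem~\ref{theorem:statistic}. Setting $q=0$ in $c_q(T)=\sum_{\T(\AA)=T}q^{v(\AA)}$ leaves exactly the number of set partitions $\AA$ in the fibre $\Pi(T)$ with $v(\AA)=0$, that is, the number of noninterlacing partitions sent to $T$ by $\T$. Since this number equals $1$, each fibre $\Pi(T)$ contains precisely one noninterlacing set partition. Because $\T$ maps $\Pi_{n}(\lambda)$ onto $\Tab_{[n]}(\lambda)$ with fibres $\Pi(T)$, it follows that $\T$ restricts to a bijection from the set of noninterlacing partitions of shape $\lambda$ onto $\Tab_{[n]}(\lambda)$, the set of standard tableaux of shape $\lambda$. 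One can even exhibit the inverse concretely: the unique noninterlacing partition in $\Pi(T)$ is the one returned by Algorithm~\ref{algorithm:generator} when at each step $\alpha(\tilde c_i)$ is taken to be the largest not-yet-used element of $C$ below $\tilde c_i$, since the crossing count in the proof of Theorem~\ref{theorem:statistic} shows this is the only choice contributing $0$ to every $w(\alpha)$.

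Finally, taking the union over all partitions $\lambda\vdash n$ yields a bijection between the noninterlacing partitions of $[n]$ and $\Tab_{[n]}=\bigcup_{\lambda\vdash n}\Tab_{[n]}(\lambda)$, the set of all standard Young tableaux with $n$ cells. Since, by the Robinson--Schensted correspondence, involutions in $S_n$ correspond bijectively to standard Young tableaux with $n$ cells (an involution corresponds to a pair $(P,P)$ of identical tableaux), composing the two bijections proves the second assertion.

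I do not expect a genuine obstacle in this argument; the point deserving care is precisely the uniform bound $c_{ij}(T)\geq 1$, which is what forces the $q=0$ evaluation of $c_q(T)$ to equal $1$ exactly rather than merely a bound, together with the routine but essential remark that the sets $\Pi(T)$, $T\in\Tab_{[n]}(\lambda)$, partition $\Pi_{n}(\lambda)$.
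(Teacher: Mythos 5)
Your proposal is correct and follows essentially the same route as the paper: the constant term of $c_q(T)$ is $1$ because it is a product of $q$-integers, so Theorem~\ref{theorem:statistic} at $q=0$ forces exactly one noninterlacing partition in each fibre $\Pi(T)$, and the second claim then follows from the Robinson--Schensted correspondence. Your explicit check that $c_{ij}(T)\geq 1$ (via $i'=i$) and the description of the inverse via the greedy rightmost choice are welcome elaborations but do not change the argument.
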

\begin{proof}
  $c_q(T)$ as defined in Eq. \eqref{eq:rq} is a product of $q$-integers. Therefore the constant term of $c_q(T)$ is 1. Theorem \ref{theorem:statistic} implies that for every standard tableau $T$ there is a unique noninterlacing partition $\AA$ such that $\T(\AA)=T$. The second statement now follows since the Robinson-Schensted correspondence gives a bijection from involutions in the symmetric group $S_n$ to standard tableaux with $n$ cells (see Aigner \cite[Thm. 8.26]{MR2339282}).
\end{proof}
 Theorem \ref{theorem:statistic} allows us to express $b_\lambda(q)$ in Eq. \eqref{eq:bla} as a sum over set partitions of shape $\lambda$:
 \begin{equation}
   \label{eq:blamset}
b_\lambda(q)=\sum_{\AA\in \Pi_{n}(\lambda)}q^{v(\AA)}.   
 \end{equation}

\subsection{Recursive computation of $b_\lambda(q)$} 
Given a partition $\lambda=(\lambda_1,\dotsc,\lambda_m)$, its Young diagram is the collection
\begin{displaymath}
  Y(\lambda) = \{(i,j) \mid 1\leq i\leq m\text{ and } 1\leq j\leq \lambda_i\}.
\end{displaymath}
The elements of $Y(\lambda)$ are called cells.
A cell $c\in Y(\lambda)$ is said to be removable if $Y(\lambda)-\{c\}$ is again the Young diagram of a partition.
  Recall that the \emph{conjugate} of an integer partition $\lambda=(\lambda_1,\dotsc,\lambda_m)$ is the integer partition $\mu=(\mu_1,\dotsc,\mu_k)$ where $k=\lambda_1$, and for each $1\leq i\leq k$, $\mu_i$ is the number of parts of $\lambda$ that are greater than or equal to $i$.
  Visually, if $\mu = \lambda'$, then $\mu_j$ is the number of cells in the $j$th column of the Young diagram of $\lambda$.
\label{sec:rec-comp}
\begin{theorem}
  \label{theorem:rec-bla}
  Let $\lambda=(\lambda_1,\dotsc,\lambda_m)$ be a partition, and $\mu=(\mu_1,\dotsc,\mu_k)$ denote its conjugate.
  Suppose the removable cells of $Y(\lambda)$ are
  \begin{displaymath}
    (i_1,j_1),\dotsc,(i_r,j_r).
  \end{displaymath}
  Let $\lambda^-_s$ be the partition obtained from $\lambda$ upon removal of the cell $(i_s,j_s)$ for each $1\leq s\leq r$.
  Then
  \begin{equation}
    \label{eq:bla-rec}
    b_\lambda(q) = \sum_{s=1}^r [\mu_{j_s-1}-(i_s-1)]_qb_{\lambda^-_s}(q),
  \end{equation}
  where $\mu_{j_s-1}-(i_s-1)$ should be interpreted as $1$ if $j_s=1$.
\end{theorem}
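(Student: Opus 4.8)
The plan is to run a ``largest-entry'' recursion on standard tableaux. Recall from Section~\ref{sec:polyn-b_lambd} that $b_\lambda(q)=\sum_{T\in\Tab_{[n]}(\lambda)}c_q(T)$ for $n=|\lambda|$. In any $T\in\Tab_{[n]}(\lambda)$ the entry $n$ occupies a removable cell of $Y(\lambda)$, since its row and its column are each as long as possible. Hence $\Tab_{[n]}(\lambda)$ is the disjoint union over $s=1,\dots,r$ of the sets $\Tab^{(s)}$ consisting of those $T\in\Tab_{[n]}(\lambda)$ with $T_{i_s j_s}=n$, and for each $s$ the map $T\mapsto\tilde T$ that deletes the cell $(i_s,j_s)$ is a bijection $\Tab^{(s)}\to\Tab_{[n-1]}(\lambda^-_s)$: deleting the maximal entry from a corner preserves row- and column-increase, it sends a tableau of support $[n]$ to one of support $[n-1]$, and $\lambda^-_s$ is by definition the shape that remains; adding back a cell $(i_s,j_s)$ filled with $n$ is the inverse.

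The heart of the argument is the identity
\[
  c_q(T)=[\mu_{j_s-1}-(i_s-1)]_q\,c_q(\tilde T)\qquad\text{for all }T\in\Tab^{(s)}.
\]
Assume first $j_s\geq2$. The factor of $c_q(T)$ attached to the cell $(i_s,j_s)$ is $[c_{i_s j_s}(T)]_q$, and since $T_{i_s j_s}=n$ exceeds every other entry of $T$,
\[
  c_{i_s j_s}(T)=\#\{i'\geq i_s\mid T_{i',j_s-1}\text{ is defined}\}=\#\{i'\geq i_s\mid\lambda_{i'}\geq j_s-1\}=\mu_{j_s-1}-(i_s-1),
\]
where the last equality uses that $\lambda_{i_s}=j_s$ (because $(i_s,j_s)$ is a corner) and that the rows of length $\geq j_s-1$ are exactly rows $1,\dots,\mu_{j_s-1}$. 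One then checks that every remaining factor is unaffected, i.e. $c_{ij}(T)=c_{ij}(\tilde T)$ for each cell $(i,j)\neq(i_s,j_s)$ of $Y(\lambda^-_s)$ with $j\geq2$: removing the entry $n$ from column $j_s$ can only alter the quantities $c_{ij}$ with $j=j_s$ (via the value $T_{ij}$) or with $j=j_s+1$ (via the comparison entries in column $j_s$), and in the former case the only value removed is $T_{i_s j_s}$ itself, while in the latter the removed entry $n$ is never counted because nothing exceeds it. If instead $j_s=1$, then $i_s=m$ (a first-column corner is the bottom cell of the diagram), the product defining $c_q$ has no factor at $(i_s,j_s)$, and $n=T_{m1}$ again contributes to no $c_{ij}$; thus $c_q(T)=c_q(\tilde T)$, matching the stated convention that $[\mu_{j_s-1}-(i_s-1)]_q$ is read as $1$.

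Summing the displayed identity over $s$ and over $T\in\Tab^{(s)}$ and applying the bijection $T\mapsto\tilde T$ yields
\[
  b_\lambda(q)=\sum_{s=1}^r[\mu_{j_s-1}-(i_s-1)]_q\sum_{\tilde T\in\Tab_{[n-1]}(\lambda^-_s)}c_q(\tilde T)=\sum_{s=1}^r[\mu_{j_s-1}-(i_s-1)]_q\,b_{\lambda^-_s}(q),
\]
which is \eqref{eq:bla-rec}. I expect the only genuine work to be the bookkeeping in the previous paragraph, namely verifying that deleting the maximal entry leaves all of the other $c_{ij}$ untouched; the rest is the standard branching structure of standard Young tableaux. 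One could alternatively argue on the set-partition side using Eq.~\eqref{eq:blamset} and tracking the interlacing number $v(\AA)$ as the block of $\AA$ containing $n$ is modified, but this appears more delicate than the direct computation with $c_q$.
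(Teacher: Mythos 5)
Your proposal is correct and follows essentially the same route as the paper's proof: classify standard tableaux by the removable cell containing the maximal entry $n$, observe that $c_{i_sj_s}(T)=\mu_{j_s-1}-(i_s-1)$ because $n$ exceeds every entry of column $j_s-1$ below row $i_s-1$, and sum the resulting factorization $c_q(T)=[\mu_{j_s-1}-(i_s-1)]_q\,c_q(T^-)$ over the deletion bijection. The only difference is that you spell out the bookkeeping (that the other factors $c_{ij}$ are unchanged, and the $j_s=1$ case) which the paper leaves implicit.
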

\begin{proof}
  Suppose that $\lambda$ is a partition of $n$.
  Given $T\in \Tab_{[n]}(\lambda)$, the cell $c=(i,j)$ that contains $n$ is removable, and $c_{ij}(T)=\mu_{j-1}-(i-1)$.
  The tableau $T^-$ obtained by deleting $n$ from $T$ is a standard tableau with support $[n-1]$, and $c_q(T)=c_q(T^-)[\mu_{j-1}-(i-1)]_q$.
  Summing over all $T\in \Tab_{[n]}(\lambda)$ gives the desired identity.
\end{proof}
\begin{example}
If $\lambda = (4,3,3,1)$, then the removable cells are
\begin{displaymath}
  (1,4),(3,3),(4,1),
\end{displaymath}
indicated by $X$ in the diagram
\begin{displaymath}
  \ytableaushort{{}{}{}X,{}{}{},{}{}X,X},
\end{displaymath}
and the partitions obtained by removing them are
\begin{displaymath}
  (3,3,3,1), (4,3,2,1), (4,3,3).
\end{displaymath}
We get
\begin{displaymath}
  b_{(4,3,3,1)}(q) = [3]_q b_{(3,3,3,1)} + [1]_q b_{(4,3,2,1)} +  b_{(4,3,3)}.
\end{displaymath}
\end{example}
With base case $b_\lambda(q)=1$ if $\lambda=\emptyset$ (the empty partition of $0$), (\ref{eq:bla-rec}) gives a recursive algorithm to compute $b_\lambda(q)$.
However, unraveling the recursion tree in (\ref{eq:bla-rec}) will just lead to the sum \eqref{eq:bla} over all standard tableaux.
If the previously computed steps are cached, then some repetition can be avoided.
Also, if the objective is to compute $b_\lambda(q)$ for all partitions of size up to a given integer, then this recursion is much faster than using \eqref{eq:bla}.
It was possible to compute $b_\lambda(q)$ for all partitions $\lambda$ of size up to $50$ in a few hours on an ordinary office desktop using this algorithm.

\subsection{$q$-Stirling numbers of the second kind}
\begin{definition}
  \label{definition:q-stirling}
  The $q$-Stirling numbers of the second kind, $S_q(n,m)$ for $n,m\geq 0$, are defined by
  \begin{displaymath}
    S_q(n,m) = S_q(n-1,m-1) + [m]_q S_q(n-1,m) \text{ for all } n\geq 1, m\geq 1,
  \end{displaymath}
  and $S_q(n,m)=\delta_{nm}$ (Kronecker delta) when either $n=0$ or $m=0$.
\end{definition}
For each partition $\lambda$, let $l(\lambda)$ denote the number of parts of $\lambda$.
\begin{theorem}
  \label{theorem:q-stirling}
  For all $n,m\geq 0$,
  \begin{displaymath}
    S_q(n,m) = \sum_{\lambda \vdash n,\;l(\lambda)=m}q^{\sum_i (i-1)(\lambda_i-1)}\sum_{T\in \Tab_{[n]}(\lambda)} c_q(T).
  \end{displaymath}
\end{theorem}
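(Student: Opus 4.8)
The plan is to prove the identity by induction on $n$, matching the recursion that defines $S_q(n,m)$ in Definition~\ref{definition:q-stirling} against a recursion for the right-hand side obtained from Theorem~\ref{theorem:rec-bla}. Write $R(n,m)$ for the right-hand side, namely $\sum_{\lambda\vdash n,\ l(\lambda)=m} q^{e(\lambda)} b_\lambda(q)$ where $e(\lambda)=\sum_i(i-1)(\lambda_i-1)$ and we have used Eq.~\eqref{eq:bla} (that is, $b_\lambda(q)=\sum_{T\in\Tab_{[n]}(\lambda)}c_q(T)$) to rewrite the inner double sum. The base cases $n=0$ or $m=0$ are immediate: the only partition involved is the empty one (when $n=m=0$) or there are no partitions at all, so both sides equal $\delta_{nm}$.

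For the inductive step, fix $n\ge 1$, $m\ge 1$, and expand $R(n,m)$ by peeling off the cell containing $n$ from each standard tableau, exactly as in the proof of Theorem~\ref{theorem:rec-bla}: for a partition $\lambda\vdash n$ with $l(\lambda)=m$ and conjugate $\mu$, we have $b_\lambda(q)=\sum_{s}[\mu_{j_s-1}-(i_s-1)]_q\, b_{\lambda^-_s}(q)$, the sum running over removable cells $(i_s,j_s)$ of $Y(\lambda)$. Re-indexing this double sum over pairs $(\nu,c)$ where $\nu=\lambda^-_s\vdash n-1$ and $c$ is the (addable) cell of $Y(\nu)$ at which $\lambda$ was formed, $R(n,m)$ becomes $\sum_{\nu\vdash n-1}\ \sum_{c}\ q^{e(\nu+c)}[h(c)]_q\, b_\nu(q)$, where $h(c)=\mu_{j-1}-(i-1)$ for $c=(i,j)$ in the column-conjugate coordinates of $\lambda=\nu+c$, and the inner sum is over addable cells $c$ of $\nu$. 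The addable cells of $\nu$ split into two types: those that start a new row (so $l(\nu+c)=l(\nu)+1$) and those that extend an existing row (so $l(\nu+c)=l(\nu)$). Grouping $\nu$ by $l(\nu)$, the first type contributes to partitions counted with $l=m$ coming from $\nu$ with $l(\nu)=m-1$, and the second type from $\nu$ with $l(\nu)=m$.

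The crux is then a purely combinatorial identity for a fixed partition $\nu$: when we sum $q^{e(\nu+c)-e(\nu)}[h(c)]_q$ over all addable cells $c$ of a given type, we should get the appropriate $q$-weight. Concretely, I expect:
\begin{itemize}
\item[(a)] summing $q^{e(\nu+c)-e(\nu)}[h(c)]_q$ over the addable cells $c$ that extend an existing row of $\nu$ telescopes to $q^{?}\cdot[l(\nu)]_q$ (up to a global power of $q$ that is independent of $c$ and equals the difference $e(\nu)$-correction), matching the $[m]_q S_q(n-1,m)$ term;
\item[(b)] for the unique addable cell $c$ that begins a new $m$th row (of size $1$), $h(c)=1$ so $[h(c)]_q=1$ and $e(\nu+c)=e(\nu)$, contributing exactly $b_\nu(q)q^{e(\nu)}$ summed over $\nu\vdash n-1$ with $l(\nu)=m-1$, i.e.\ $S_q(n-1,m-1)$ by the induction hypothesis.
\end{itemize}
Part (b) is routine; the main obstacle is verifying the telescoping in part (a). The point is that for a column index $j\ge 2$, the addable cells in column $j$ occur at rows $i$ with $\mu_{j-1}\ge i-1$, i.e.\ $h=\mu_{j-1}-(i-1)$ ranges over $1,2,\dots$ up to some bound as we move up the column, and the change in $e$ when adding cell $(i,j)$ is $e(\nu+c)-e(\nu)=(i-1)(j-1)$, while adding cell $(i',j-1)$ in the previous column changes it by $(i'-1)(j-2)$; tracking the increments column by column, the weighted sum $\sum_c q^{(i-1)(j-1)}[\,\mu_{j-1}-(i-1)\,]_q$ over addable cells should collapse. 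I would carry this out by a direct manipulation of $q$-integers — writing $[a]_q=(q^a-1)/(q-1)$ and summing the geometric-type series over the addable rows in each column, then summing over columns — and checking that the result is $q^{e(\nu)}[l(\nu)]_q$ after accounting for the column-$1$ addable cell handled in (b). An alternative, possibly cleaner, route is to avoid Theorem~\ref{theorem:rec-bla} entirely and instead use the set-partition description $b_\lambda(q)=\sum_{\AA\in\Pi_n(\lambda)}q^{v(\AA)}$ from Eq.~\eqref{eq:blamset}: then $R(n,m)=\sum_{\AA\in\Pi_{n,m}}q^{v(\AA)+e(\lambda^\AA)}$ is the expression in Eq.~\eqref{eq:qs3}, and one shows this satisfies the $S_q$ recursion by classifying $\AA\in\Pi_{n,m}$ according to whether $n$ is a singleton block (giving the $S_q(n-1,m-1)$ term) or is appended to one of the $m$ blocks of a partition of $[n-1]$, checking that appending $n$ to the $r$th-largest-indexed eligible block adds exactly the right number of interlacings plus the change in $e$ to produce the factor $[m]_q$. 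I would likely present the set-partition version as the main argument since the bookkeeping of the $q$-weight is more transparent there, with the obstacle being a careful count of how many new interlacing pairs $(\arc_{l}(A_k),\arc_{l}(A_i))$ are created when $n$ extends block $A_i$ from length $l-1$ to length $l$.
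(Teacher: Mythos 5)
Your overall strategy is the right one and is essentially the paper's: induct via the defining recursion for $S_q(n,m)$, peel off the cell containing $n$, send new-row cells to the $S_q(n-1,m-1)$ term and the remaining addable cells to the $[m]_q S_q(n-1,m)$ term. Part (b) of your argument is fine. But part (a) — the telescoping that produces the factor $[m]_q$ — is the entire content of the theorem, and you have not proved it; worse, the one concrete formula you write down for it is wrong. The change in $e(\lambda)=\sum_i(i-1)(\lambda_i-1)$ upon adding a cell in position $(i,j)$ is $(i-1)$ (only $\lambda_i$ changes, by $1$), not $(i-1)(j-1)$. With your exponent the sum does not collapse: for $\nu=(3,3,2,2)$ (so $\tilde\mu=(4,4,2)$, $m=4$) the addable cells in columns $j\geq 2$ are $(3,3)$ and $(1,4)$, and your weights give $q^{4}[2]_q+q^{0}[2]_q=1+q+q^4+q^5\neq[4]_q$, whereas the correct weights give $q^{2}[2]_q+q^{0}[2]_q=[4]_q$.

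The clean way to run the computation (and what the paper does) is to pass to conjugate coordinates: $e(\lambda)=\sum_{j\geq2}\binom{\mu_j}{2}$, so adding a cell to column $j\geq 2$ of $\nu$ (possible exactly when $\tilde\mu_j<\tilde\mu_{j-1}$, and necessarily at row $i=\tilde\mu_j+1$) changes $e$ by $\tilde\mu_j$ and multiplies $c_q$ by $[\tilde\mu_{j-1}-\tilde\mu_j]_q$, contributing $q^{\tilde\mu_j}[\tilde\mu_{j-1}-\tilde\mu_j]_q=q^{\tilde\mu_j}+\dotsb+q^{\tilde\mu_{j-1}-1}$. As $j$ runs over the addable columns $\geq 2$, the exponent ranges $\{\tilde\mu_j,\dotsc,\tilde\mu_{j-1}-1\}$ tile $\{0,1,\dotsc,\tilde\mu_1-1\}$, giving exactly $[\tilde\mu_1]_q=[m]_q$ with no stray power of $q$ (your ``$q^{?}$'' is $q^0$). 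Your proposed alternative via Eq.~\eqref{eq:blamset} and set partitions is likewise only sketched: the count of new interlacings created when $n$ is appended to a block is precisely the point that needs proving there, and as stated you have not done it. Either route can be completed, but as written the proposal stops at the step where the actual work lies, and the formula it offers for that step would make the identity false.
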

\begin{proof}
  Let $\mu=\lambda'$, the integer partition conjugate to $\lambda$.
  We have $\sum_{i=1}^m(i-1)(\lambda_i-1) = \sum_{j=2}^k\binom{\mu_j}2$. Indeed, both sides represent the sum of all the entries of the tableau of shape $\lambda$ whose first column is filled with $0$'s and the remaining entries of the $i$th row are  $i-1$ for each $i\geq 1$.
  For example if $\lambda = (5,4,4,2)$, then this tableau is
  \begin{displaymath}
    \ytableaushort{00000,0111,0222,03}.
  \end{displaymath}

  In order to prove Theorem~\ref{theorem:q-stirling}, it suffices to show that the sum
  \begin{displaymath}
    \Sigma_q(n,m) := \sum_{\lambda\vdash n,\;l(\lambda)=m}q^{\sum_i(i-1)(\lambda_i-1)}\sum_{T\in \Tab_{[n]}(\lambda)} c_q(T)
  \end{displaymath}
  satisfies the defining conditions for $S_q(n,m)$ given in Definition \ref{definition:q-stirling}.

  Note that $\Sigma_q(0,0)=1$, since the empty partition of $0$ has length $0$, and $c_q(T)=1$ for the the empty tableau $T$.
  On the other hand if precisely one of $n$ and $m$ is zero, then $\Sigma_q(n,m)=0$ since the outer sum is empty.

  It remains to show that
  \begin{displaymath}
    \Sigma_q(n,m) := \Sigma_q(n-1,m-1) + [m]_q \Sigma_q(n-1,m) \text{ for all } n\geq 1, m\geq 1.
  \end{displaymath}
  The left hand side is a sum over standard tableaux with $n$ cells in $m$ rows.
  We will see that the tableaux which contain $n$ in their first column contribute $\Sigma_q(n-1,m-1)$ to this sum, and the remaining tableaux contribute $[m]_q\Sigma_q(n-1,m)$.
  
  First, suppose that $n$ lies in the first column of $T$.
  Removing $n$ from $T$ results in a tableau $\tilde T$ with $n-1$ cells in $m-1$ rows.
  Moreover, $c_q(T)=c_q(\tilde T)$.
  Therefore the contribution of such tableaux to $\Sigma_q(n,m)$ is $\Sigma_q(n-1,m-1)$.

  Now we come to the contribution of tableaux $T$ where $n$ does not occur in the first column.
  For each tableau $\tilde T$ with $n-1$ cells in $m$ rows, let $U(\tilde T)$ be the set of all standard tableaux $T$ with $n$ cells in $m$ rows such that $\tilde T$ is obtained when the cell containing $n$ is removed from $T$.
  Suppose that the shape of $\tilde T$ is $\tilde\lambda$, and $\tilde\lambda'=\tilde\mu=(\tilde\mu_1,\dotsc,\tilde\mu_k)$.
  \begin{figure}[h]
    \centering
    \includegraphics[width=0.5\textwidth]{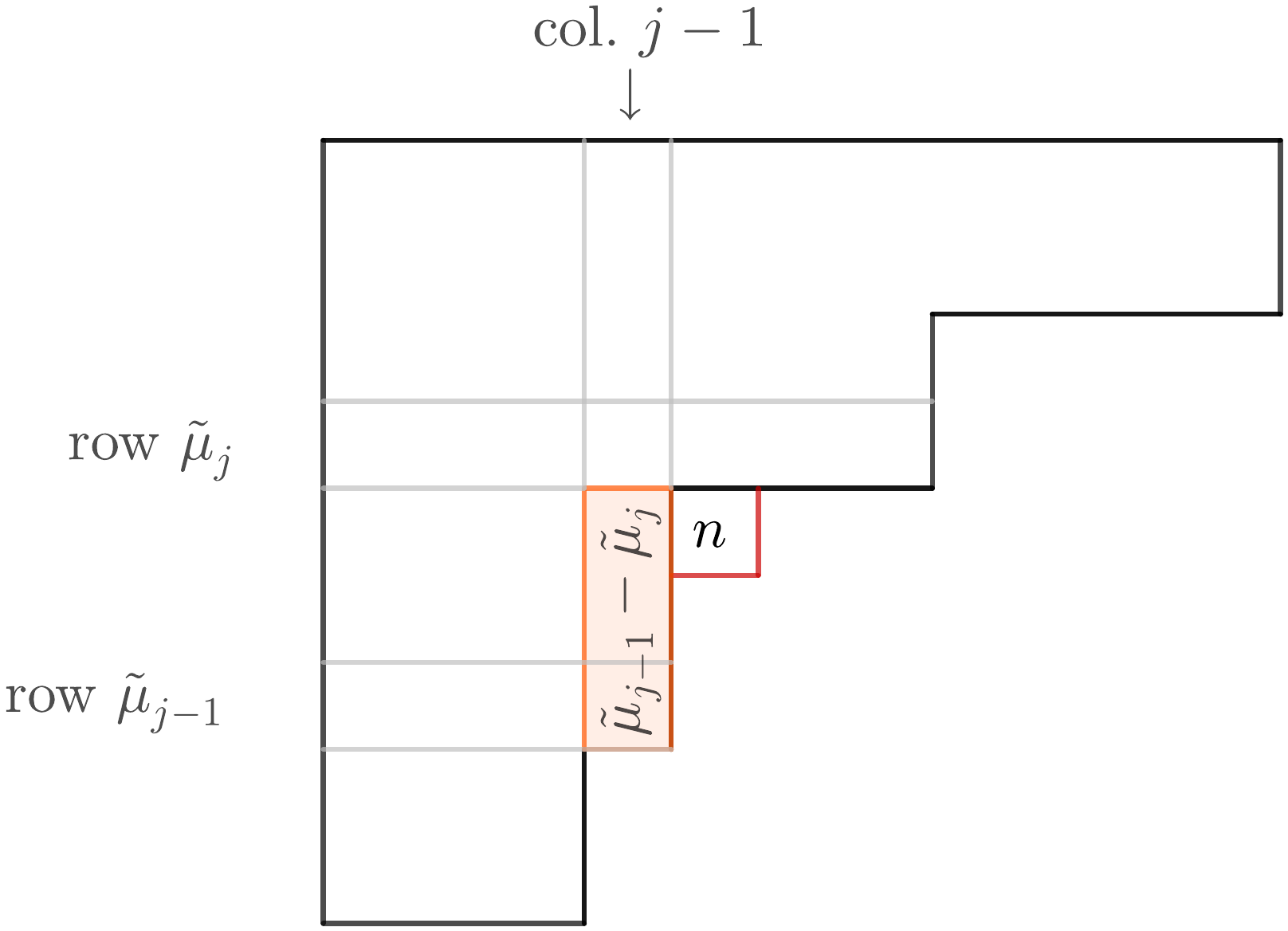}\quad
    \includegraphics[width=0.41\textwidth]{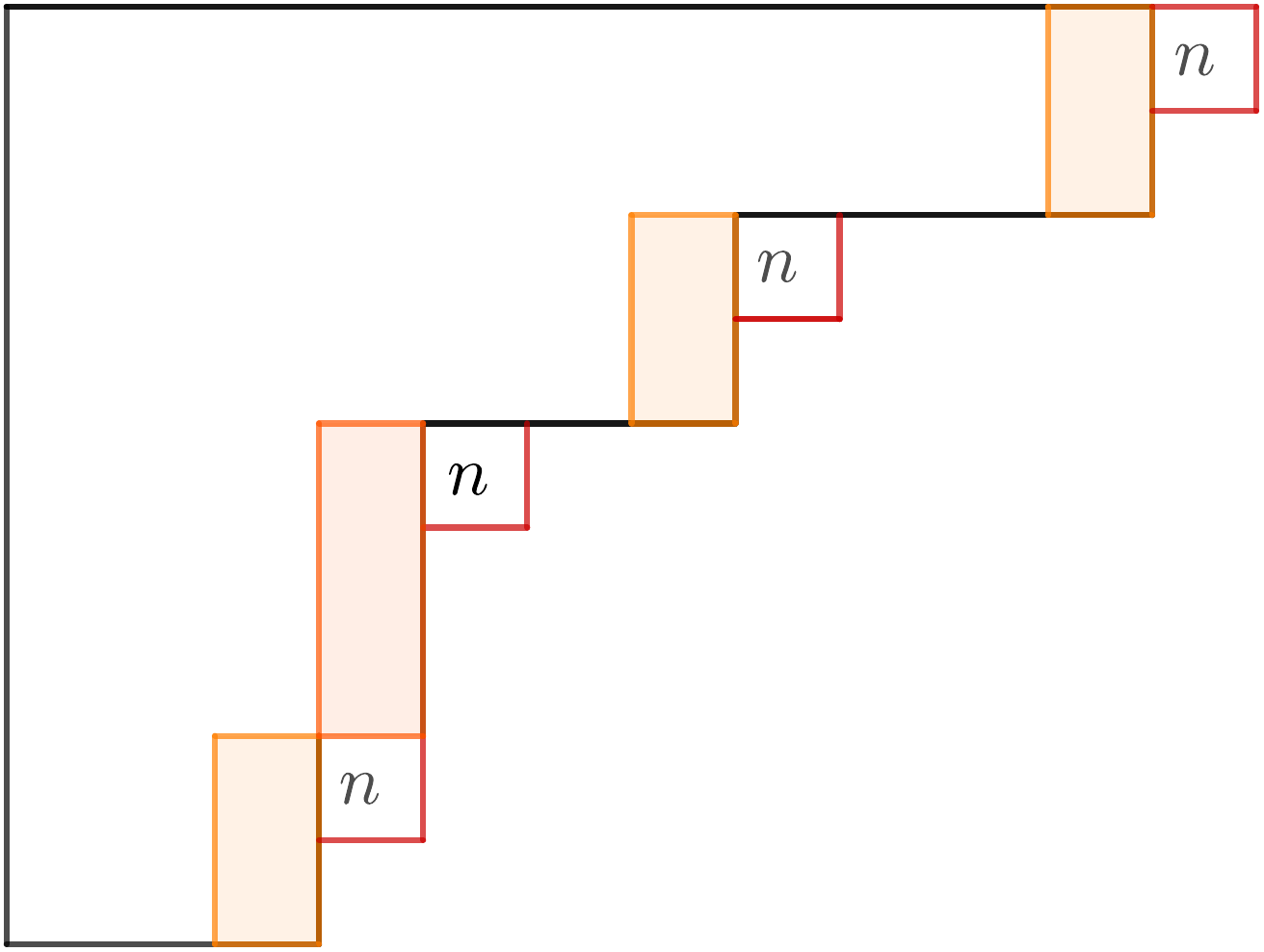}
    \caption{Left: Cell containing $n$ in the $j$th column.\\  Right: Contributions of all tableaux in $U(\tilde T)$}
    \label{fig:removal}
  \end{figure}

  When the cell in $T$ containing $n$ lies in its $j$th column (see Figure~\ref{fig:removal}), then we must have $\tilde\mu_j<\tilde\mu_{j-1}$.
  We have $c_{\tilde\mu_j+1,j}(T)=\tilde\mu_{j-1}-\tilde\mu_j$ (counting the highlighted cells in Figure~\ref{fig:removal}).
  Let $\lambda$ denote the shape of $T$.
  Then $\lambda'=\mu=(\tilde\mu_1,\dots,\tilde\mu_{j-1},\tilde\mu_j+1,\tilde\mu_{j+1},\dotsc,\tilde\mu_k)$.
  We have
  \begin{displaymath}
    \sum_i(i-1)(\lambda_i-1) = \sum_{j\geq 2} \binom{\mu_j}2 = \tilde\mu_j + \sum_{j\geq 2}\binom{\tilde\mu_j}2 = \tilde\mu_j + \sum_i (i-1)(\tilde\lambda_i-1).
  \end{displaymath}
  Therefore,
  \begin{align*}
    q^{\sum_i(i-1)(\lambda_i-1)}c_q(T) &= q^{\tilde\mu_j}[\tilde\mu_{j-1}-\tilde\mu_j]_qq^{\sum_i(i-1)(\tilde\lambda_i-1)}c_q(\tilde T)\\&=(q^{\tilde\mu_j}+\dotsb+q^{\tilde\mu_{j-1}-1})q^{\sum_i(i-1)(\tilde\lambda_i-1)}c_q(\tilde T).
  \end{align*}
  Each tableau in $U(\tilde T)$ is obtained by adding a cell containing $n$ to the $j$th column of $\tilde T$ for some $j$ such that $\tilde\mu_j<\tilde\mu_{j-1}$.
  Adding up the contributions of all such tableaux (Figure~\ref{fig:removal}) gives
  \begin{align*}
    \sum_{T\in U(\tilde T)} q^{\sum_i(i-1)(\lambda_i-1)}c_q(T) & = [\tilde\mu_1]_qq^{\sum_i(i-1)(\tilde\lambda_i-1)}c_q(\tilde T)\\&= [m]_qq^{\sum_i(i-1)(\tilde\lambda_i-1)}c_q(\tilde T).
  \end{align*}
  Each tableau $T$ where $n$ does not lie in the first column belongs to $U(\tilde T)$ for a unique standard tableau $\tilde T$ of size $n-1$ with $m$ elements in its first column.
  It follows that these tableaux collectively contribute $[m]_q\Sigma_q(n-1,m)$ to the sum defining $\Sigma_q(n,m)$.
\end{proof}
\begin{example} For
  \begin{displaymath}
    \tilde T = \ytableaushort{125,34{10},68,79},
  \end{displaymath}
  $U(\tilde T)$ consists of the tableaux
  \begin{displaymath}
    T_1 = \ytableaushort{125,34{10},68{11},79}\text{ and } T_2=\ytableaushort{125{11},34{10},68,79}.
  \end{displaymath}
  We have
  \begin{displaymath}
    q^9c_q(T_1) = q^2[2]_q\times q^7c_q(\tilde T), \text{ and } q^7c_q(T_2) = q^0[2]_q\times q^7c_q(\tilde T).
  \end{displaymath}
  Therefore,
  \begin{displaymath}
    q^9c_q(T_1)+q^7c_q(T_2) = q^2(1+q)q^7c_q(\tilde T) + (1+q)q^7c_q(\tilde T) = [4]_qq^7c_q(\tilde T).
  \end{displaymath}
\end{example}
Let $\Pi_{n,m}$ denote the set of all partitions of $[n]$ with $m$ parts.
For $\AA\in \Pi_{n,m}$ let $(\lambda_1^\AA,\dots,\lambda_m^\AA)$ denote the shape of $\AA$.
The expression for $S_q(n,m)$ in Theorem~\ref{theorem:q-stirling} can be written as a sum over elements of $\Pi_{n,m}$ of monomials using Theorem~\ref{theorem:statistic}:
\begin{equation}
  \label{eq:q-stirling}
  S_q(n,m) = \sum_{\AA\in \Pi_{n,m}} q^{v(\AA)+\sum_i(i-1)(\lambda_i^\AA-1)}.
\end{equation}
In view of Corollary \ref{cor:noninterl}, $S_q(n,m)$ can also be written as a sum over a much smaller class of set partitions 
\begin{equation}
  \label{eq:qs4}
S_q(n,m)=\sum_{ \substack{\AA\in\Pi_{n,m}\\v(\AA)=0}}q^{\sum_i (i-1)(\lambda_i^\AA-1)}c_q(\T(\AA)).  
\end{equation} 
The trade-off is that the summand is now a power of $q$ times a product of $q$-integers.
In terms of the polynomials $b_\lambda(q)$, the identity of Theorem~\ref{theorem:q-stirling} has an alternate form:
\begin{equation}
  \label{eq:q-stirling-alt}
  S_q(n,m) = \sum_{\lambda\vdash n,\;l(\lambda)=m} q^{\sum_i(i-1)(\lambda_i-1)}b_\lambda(q).
\end{equation}

\subsection{Value at $q=-1$}
\label{sec:minusone}
It turns out that $b_\lambda(-1)$ is always positive.
\begin{theorem}
  For every integer partition $\lambda$, $b_\lambda(-1)$ is the number of standard tableaux $T$ of shape $\lambda$ for which $c(T)$ is odd.
  In particular, for every non-empty\footnote{The empty partition is the unique partition of $0$ with no parts.} integer partition $\lambda$, $b_\lambda(-1)>0$.
\end{theorem}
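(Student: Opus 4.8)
The plan is to evaluate the defining sum $b_\lambda(q) = \sum_{T\in\Tab_{[n]}(\lambda)} c_q(T)$ directly at $q=-1$, exploiting the fact that each $c_q(T)$ is a product of $q$-integers.

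First I would record the elementary fact that $[n]_{-1} = (1-(-1)^n)/2$ equals $1$ when $n$ is odd and $0$ when $n$ is even. Since $c_q(T) = \prod_{i=1}^m\prod_{j=2}^{\lambda_i} [c_{ij}(T)]_q$, it follows that $c_{-1}(T)$ is a product of $0$'s and $1$'s: it equals $1$ precisely when every factor $c_{ij}(T)$ is odd, and equals $0$ otherwise. On the other hand the integer $c(T) = \prod_{i,j} c_{ij}(T)$ is odd if and only if every $c_{ij}(T)$ is odd. Hence $c_{-1}(T)$ is exactly the indicator of the event ``$c(T)$ is odd'', and summing over $T\in\Tab_{[n]}(\lambda)$ yields $b_\lambda(-1) = \#\{T\in\Tab_{[n]}(\lambda)\mid c(T)\text{ is odd}\}$, which is the first assertion.

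For the strict positivity it then suffices to exhibit, for each non-empty $\lambda$, one standard tableau of shape $\lambda$ with $c(T)$ odd. I would take the row-superstandard tableau $T^\circ$ obtained by filling the rows of $\lambda$ left to right and top to bottom with $1,2,\dotsc,n$, so that $T^\circ_{ij} = \lambda_1+\dotsb+\lambda_{i-1}+j$. A short check shows $T^\circ$ is a standard tableau, and that for any cell $(i,j)$ with $j\geq 2$ one has $T^\circ_{i,j-1} < T^\circ_{ij}$ while $T^\circ_{i',j-1} > T^\circ_{ij}$ for every row $i'>i$ that still carries a cell in column $j-1$ (because the intervening row sums $\lambda_i+\dotsb+\lambda_{i'-1}$ are at least $1$). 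Hence $c_{ij}(T^\circ)=1$ for all relevant cells, so $c(T^\circ)=1$, which is odd, and therefore $b_\lambda(-1)\geq 1$.

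There is no serious obstacle here: the whole content is the observation that $[n]_{-1}\in\{0,1\}$ detects the parity of $n$, so that the $q$-analog $c_q(T)$ specializes at $q=-1$ to a Boolean version of $c(T)$. The only place requiring (minor) care is the positivity clause, where one must produce an explicit tableau; the row-reading tableau works precisely because each of its statistics $c_{ij}$ collapses to $1$.
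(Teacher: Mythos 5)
Your proposal is correct and matches the paper's own argument essentially verbatim: both evaluate each $q$-integer factor at $q=-1$ to see that $c_{-1}(T)$ is the indicator of ``$c(T)$ odd,'' and both establish positivity by exhibiting the row-reading tableau, for which every $c_{ij}$ equals $1$. No substantive differences.
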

\begin{proof}
  For every $n\in \nat$, the value of a $q$-integer at $q=-1$ is
  \begin{displaymath}
    [n]_q|_{q=-1} =
    \begin{cases}
      1 & \text{if } n \text{ is odd},\\
      0 & \text{if } n \text{ is even}.
    \end{cases}
  \end{displaymath}
  Hence $c_q(T)$ is zero at $q=-1$ except when all the integers $c_{ij}(T)$ appearing in (\ref{eq:rq}) are odd, in which case it is equal to $1$.
  It follows from the definition of $b_\lambda(q)$ (\ref{eq:bla}) that $b_\lambda(-1)$ is the number of tableaux $T$ with shape $\lambda$ and $c(T)$ odd.

  To prove that $b_\lambda(-1)>0$, it suffices to show that for each non-empty integer partition $\lambda$ of $n$, there is at least one tableau $T$ of shape $\lambda$ such that $c(T)$ is odd.
  In fact, there is always a tableau $T$ of shape $\lambda$ with $c(T)=1$; just enter the integers from $1$ to $n$ in increasing order along the rows, starting with the top row and going down (see Example~\ref{example:one-tab} below).
\end{proof}
In the spirit of Stemberidge's $q=-1$ phenomenon~\cite{ste}, it would be interesting to find a natural involution on the collection of set partitions that changes the parity of the number of interlacings, and leaves fixed only noninterlacing set partitions corresponding to tableaux $T$ for which $c(T)$ is odd.
\begin{example}
  \label{example:one-tab}
  For $\lambda = (3,2,2)$, the tableau $T=\ytableaushort{123,45,67}$ has $c(T)=1$.
\end{example}
For a large class of partitions $\lambda$, $b_\lambda(-1)$ has an interpretation in terms of shifted tableaux.
Let $\lambda=(\lambda_1,\dotsc,\lambda_m)$ be an integer partition.
A shifted tableau of shape $\lambda$ is an array
\begin{displaymath}
  T = (T_{ij}\mid T_{ij}\in \nat,\;1\leq i \leq m;\; i\leq j\leq \lambda_i+i-1).
\end{displaymath}
\begin{definition}
  A multilinear shifted tableau is a shifted tableau $T$ where each integer occurs at most once, the rows of $T$ increase from left to right, the columns of $T$ increase from top to bottom, and the diagonals of $T$ increase from top-left to bottom-right.
  The set $S$ of entries of $T$ is called its support.
  A shifted tableau with support $[n]$ is called a standard shifted tableau.
  We denote the set of multilinear shifted tableaux with support $S$ and shape $\lambda$ by $\stab_S(\lambda)$.
\end{definition}
\begin{example}
  The following is a standard shifted tableau of shape $(3,3,2,1)$.
  \begin{displaymath}
    \ytableaushort{123,{\none}456,\none\none 78,\none\none\none 9}
  \end{displaymath}
\end{example}

\begin{remark}
  The study of standard shifted tableaux of shape $\lambda$ when $\lambda$ has distinct parts goes back to Schur. See the survey article of Adin and Roichman~\cite[Section. 14.5.3]{MR3409355}.  Recently there has been an interest in the enumeration of shifted tableaux for partitions whose parts are not necessarily distinct \cite{MR3017956,MR3665574}. 
\end{remark}
\begin{theorem}
  \label{theorem:minusone}
  For every partition $\lambda$ of $n$ whose parts are distinct, with the possible exception of the largest part,
  \begin{displaymath}
    b_\lambda(-1) = |\stab_{[n]}(\lambda)|.
  \end{displaymath}
\end{theorem}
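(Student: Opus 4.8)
The plan is to prove the identity by induction on $n=|\lambda|$, by showing that both sides obey the same recursion, namely the one obtained by examining the cell that contains the largest entry. Start from the recursion of Theorem~\ref{theorem:rec-bla}. Since $[k]_q|_{q=-1}$ equals $1$ when $k$ is odd and $0$ when $k$ is even (and a removable cell in the first column contributes $1$), evaluating that recursion at $q=-1$ gives
\[
  b_\lambda(-1) \;=\; \sum_{s} b_{\lambda^-_s}(-1),
\]
where the sum is over those removable cells $(i_s,j_s)$ of $Y(\lambda)$ for which $\mu_{j_s-1}-(i_s-1)$ is odd, with $\mu=\lambda'$. Thus the whole proof reduces to matching this index set of cells (and their effect on the shape) with the corresponding data for standard shifted tableaux.

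For the shifted side I would argue that, for a multilinear shifted tableau $T$ with support $[n]$, the cell containing $n$ must be the last cell of its row, must have no cell of the shifted diagram directly below it, and must have no cell of the shifted diagram immediately to its lower right on its diagonal; deleting $n$ then produces a multilinear shifted tableau with support $[n-1]$, and this operation is manifestly reversible. Translating these conditions, the admissible positions for $n$ in a shifted tableau of shape $\lambda=(\lambda_1,\dots,\lambda_m)$ are exactly the cells $(i,\lambda_i)$ with $i=m$ or $\lambda_i\ge\lambda_{i+1}+2$ (with $\lambda_{m+1}:=0$). Consequently
\[
  |\stab_{[n]}(\lambda)| \;=\; \sum_{(i,\lambda_i)} |\stab_{[n-1]}(\lambda^-_{(i,\lambda_i)})|,
\]
the sum over exactly those cells, where $\lambda^-_{(i,\lambda_i)}$ is $\lambda$ with that cell removed.

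The core step is then a lemma, proved by a short case check that uses the hypothesis on $\lambda$: when $\lambda$ has distinct parts with the possible exception of the largest, a removable cell $(i,\lambda_i)$ has $\mu_{\lambda_i-1}=\#\{r:\lambda_r\ge\lambda_i-1\}$ equal to $i$ when $\lambda_{i+1}<\lambda_i-1$ and equal to $i+1$ when $\lambda_{i+1}=\lambda_i-1$; hence $\mu_{\lambda_i-1}-(i-1)$ is odd precisely when $i=m$ or $\lambda_i\ge\lambda_{i+1}+2$ — exactly the admissible positions for $n$ found above. One must also verify that for each such cell the shape $\lambda^-_{(i,\lambda_i)}$ again has distinct parts with the possible exception of the largest: the gap $\lambda_i\ge\lambda_{i+1}+2$ forbids a new coincidence of parts at rows $i$ and $i+1$ when $i\ge2$, while for $i=1$ the gap makes $\lambda^-$ strictly decreasing. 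Granting the lemma, the induction closes at once: the base case $\lambda=\emptyset$ gives $b_\emptyset(-1)=1=|\stab_{[0]}(\emptyset)|$, and in the inductive step the two displayed recursions have the same index set and, by the inductive hypothesis applied to each $\lambda^-_{(i,\lambda_i)}$ (which stays in the class by the lemma), equal summands.

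I expect the only real difficulty to be bookkeeping, particularly the exceptional case $\lambda_1=\lambda_2$ and the translation between the two index conventions for a shifted diagram. The one genuinely delicate point to flag is that a removable cell $(1,\lambda_1)$ with $\lambda_1=\lambda_2+1$ produces a shape $\lambda^-$ that is still in the allowed class yet is \emph{not} a legal location for $n$; this is reconciled precisely because such a cell contributes the even factor $\mu_{\lambda_1-1}-0=2$ and therefore drops out of the $q=-1$ recursion, so the two recursions remain in lockstep.
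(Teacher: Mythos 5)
Your proof is correct, and it takes a genuinely different route from the paper's. The paper argues bijectively and non-recursively: it first observes that $b_\lambda(-1)$ counts standard tableaux $T$ of shape $\lambda$ with $c(T)$ odd, and then shows that under the hypothesis on $\lambda$ (equivalently, successive parts of $\mu=\lambda'$ differ by at most one) the oddness of every $c_{ij}(T)$ forces $s_{ij}(T):=c_{ij}(T)+(i-1)$ to equal $i$, i.e.\ each column of $T$ interleaves the previous one; shifting row $i$ by $i-1$ cells then produces a standard shifted tableau, giving an explicit bijection. You instead induct on $n$ by matching the $q=-1$ specialization of the recursion of Theorem~\ref{theorem:rec-bla} against the ``delete the cell containing $n$'' recursion for multilinear shifted tableaux. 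Your key lemma --- that for $\lambda$ in the allowed class the coefficient $\mu_{\lambda_i-1}-(i-1)$ equals $1$ when $i=m$ or $\lambda_i\ge\lambda_{i+1}+2$ and equals $2$ when $\lambda_{i+1}=\lambda_i-1$ --- is where the hypothesis enters, playing the role of the paper's inequality $\mu_{j-1}\le\mu_j+1$; your identification of the admissible positions for $n$ in a shifted tableau, your check that the allowed class is preserved under removal of a surviving cell, and your handling of the exceptional cell $(1,\lambda_1)$ with $\lambda_1=\lambda_2+1$ (which keeps $\lambda^-$ in the class but contributes the even factor $2$) are all correct. What each approach buys: the paper's argument yields an explicit shape-preserving bijection (and shows, in passing, that under the hypothesis $c(T)$ odd forces $c_{ij}(T)=1$ for all $i,j$), which is structurally more informative; your argument is more mechanical but self-contained given Theorem~\ref{theorem:rec-bla}, makes the role of the hypothesis very transparent through the $1$-versus-$2$ dichotomy, and cleanly isolates the edge cases, at the cost of not producing a bijection.
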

\begin{example}
  Take $\lambda = (3,3,1)$.
  Then
  \begin{displaymath}
    b_\lambda(q) = q^{4} + 5 q^{3} + 15 q^{2} + 28 q + 21,
  \end{displaymath}
  and $b_\lambda(-1)=4$.
  There are four standard shifted tableaux of shape $(3,3,1)$, namely
  \begin{displaymath}
    \ytableaushort{123,\none 456,\none\none 7},
    \ytableaushort{124,\none 356,\none\none 7},
    \ytableaushort{123,\none 457,\none\none 6}, \text{ and }
    \ytableaushort{124,\none 357,\none\none 6}.
  \end{displaymath}
\end{example}
\begin{proof}
  [Proof of Theorem~\ref{theorem:minusone}]
  Let $\lambda=(\lambda_1,\dotsc,\lambda_m)$ be an integer partition.
  Suppose $T\in \Tab_{[n]}(\lambda)$ is such that $c(T)$ is odd.
  This happens if and only if $c_{ij}(T)$ is odd for all $1\leq i\leq m$, $2\leq j\leq \lambda_i$.
  For each $j\geq 2$, let $s_{ij}(T):=c_{ij}(T)+(i-1)$.
  Then $s_{ij}(T)$ is the number of entries of the $(j-1)$st column of $T$ that are less than $T_{ij}$.
  Since the $j$th column of $T$ is increasing, we have
  \begin{displaymath}
    1\leq s_{1j}(T)\leq  s_{2j}(T)\leq \dotsc\leq  s_{\mu_j,j}(T)\leq \mu_{j-1},
  \end{displaymath}
  where $\mu=(\mu_1,\dotsc,\mu_k)$ is the conjugate of $\lambda$.
  Since $c_{ij}(T)$ is odd for each $1\leq i\leq \mu_j$, we must have $s_{ij}(T)$ and $s_{i+1,j}(T)$ differ by an odd number for $i=1,\dotsc,\mu_j$.
  In particular, $s_{ij}(T)<s_{i+1,j}(T)$ and if they do not differ by $1$, then they differ by at least $3$.
  It follows that $s_{\mu_jj}(T)\geq \mu_j$, and if strict inequality holds, then $s_{\mu_jj}(T)\geq \mu_j+2$.

  The assertion that the parts of $\lambda$, except possibly the largest ones, are distinct is equivalent to the assertion that successive parts of $\mu$ differ by at most one.
  Therefore $\mu_{j-1}\leq \mu_j+1$.
  It follows that $s_{\mu_j,j}(T)\leq\mu_j+1$.
  But this can only happen when $s_{ij}(T)=i$ for each $i=1,\dotsc,\mu_j$.
  This means that the entries of the $j$th column of $T$ interleave the first $\mu_j$ entries of the $(j-1)$st column:
  \begin{displaymath}
    T_{1,j-1}<T_{1j}<T_{2,j-1}<T_{2j}<\dotsb.
  \end{displaymath}
  In particular, for each $2\leq j\leq k$, $1\leq i\leq \mu_i$, we have
  \begin{displaymath}
    T_{ij}<T_{i+1,j-1}
  \end{displaymath}
  If we were to shift the $i$th row one cell to the right relative to the $(i-1)$st, each entry in the $i$th row would still be greater than the corresponding entry in the $(i-1)$st row.
  Doing this with each row would result in a standard shifted tableau of shape $\lambda$.
\end{proof}
\begin{example}
  The tableau
  \begin{displaymath}
    T = \ytableaushort{124,356,7}
  \end{displaymath}
  of shape $(3,3,1)$ has $c(T)=1$.
  Shifting each row one cell to the right relative to row one above it still results in increasing columns and diagonals:
  \begin{displaymath}
    \ytableaushort{124,\none 356,\none\none 7}.
  \end{displaymath}
\end{example}

\subsection{Relation to the $q$-Hermite Catalan matrix}
\label{sec:relation-q-hermite}

In the combinatorial theory of orthogonal polynomials developed by Viennot \cite{viennot1983theorie} (see  also \cite[Section~7.1]{MR2339282} and \cite{MR3409351}), given sequences $\{b_k\}_{k\in \nn}$ and $\{\lambda_k\}_{k\in \nat}$ in a unital commutative ring,  the associated Catalan triangle $A=(a_{n,k})_{k, n\in \nn}$ is defined by
\begin{align*}
  a_{0,0} & = 1,\\
  a_{0,k} & = 0, \text{ for } k\geq 1,\\
  a_{n,k} & = a_{n-1,k-1} + b_k a_{n-1,k} + \lambda_{k+1}a_{n-1,k+1} \text{ for }n\geq 1.
\end{align*}
In the last identity, if $k=0$, the term $a_{n-1,k-1}$ is taken as $0$.
The entries $a_{n,0}$ are called generalized Catalan numbers associated to $\{b_k\}$ and $\{\lambda_k\}$, and are the moments of the family of orthogonal polynomials associated to these sequences by Favard's theorem.
The case $b_k=0$ and $\lambda_k=1$ for all $k$ corresponds to Chebychev polynomials of the second kind, where $a_{n,0}$ is $0$ for odd $n$, and equals the Catalan number $C_m$ for $n=2m$.

The case $b_k=0$ for all $k$ and $\lambda_k=k$ corresponds to Hermite polynomials, while the case $b_k=0$ and $\lambda_k=[k]_q$ corresponds to $q$-Hermite polynomials~\cite{MR930175}.
Since $b_k=0$ for all $k$, $a_{n,k}=0$ if $n+k$ is odd.
Given $n,k\in \nn$ such that $n+k$ is even, let $\lambda(n,k)$ denote the partition $(2^{(n-k)/2},1^k)$.
Then by Theorem~\eqref{eq:bla-rec}, we have
\begin{displaymath}
  b_{\lambda(n,k)}(q) = b_{\lambda(n-1,k-1)}(q) + [k+1]_qb_{\lambda(n-1,k+1)}.
\end{displaymath}
Thus $b_{\lambda(n,k)}(q)$ is the entry $a_{n,k}$ of the Catalan triangle associated to $q$-Hermite polynomials.

In particular, for every $m\in \nn$, $b_{(2^m)}(q)$ is the $2m$th moment of the $q$-Hermite orthogonal polynomial sequence.
Consider the expression \eqref{eq:blamset} for $b_{(2^m)}(q)$.
The set $\Pi_{n}(2^m)$ coincides with the set of chord diagrams on $2m$ points and the interlacing number coincides with the number of crossing chords.
Thus $b_{(2^m)}(q)$ is the polynomial $T_m(q)$ studied by Touchard \cite{MR37815,MR36006,MR46325}  in the context of the stamp folding problem.
Based on Touchard's work, Riordan~\cite{MR366686} gave a compact expression for $T_m(q)$:
\begin{equation}
  \label{eq:touchard-riordan}
  T_m(q)(1-q)^m = \sum_{i=0}^m (-1)^i\left[\binom{2m}{m-i}-\binom{2m}{m-i-1}\right]q^{\binom{i+1}2}.
\end{equation}
See Read~\cite{MR556055} for an alternative proof of this formula using continued fractions and Penaud~\cite{MR1336847} for a bijective proof.
A beautiful exposition can be found in Aigner \cite[p. 337]{MR2339282}.      

\subsection{Partitions with two parts}
\label{sec:twoparts}
\begin{theorem}
  For integers $r\geq s\geq 0$,
  \begin{displaymath}
    b_{(r,s)}(q) =
    \begin{cases}
      \sum_{i=0}^s f_{(r+i,s-i)}q^i &\text{if } r>s,\\
      \sum_{i=0}^{s-1} f_{(r+i,s-1-i)}q^i&\text{if } r=s.
    \end{cases}
  \end{displaymath}
  Here, for each partition $\lambda$, $f_\lambda$ denotes the number of standard tableaux of shape $\lambda$.
\end{theorem}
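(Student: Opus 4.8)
The plan is to read the formula off from the combinatorics already built up. First I would work out $c_q(T)$ for a two-row shape: if $T\in\Tab_{[n]}(r,s)$ then $c_{2j}(T)=1$ for all $j$ (columns increase), while $c_{1j}(T)\in\{1,2\}$, equal to $2$ exactly when $T_{2,j-1}<T_{1,j}$. Hence $c_q(T)=(1+q)^{d(T)}$ with $d(T):=\#\{\,j : T_{2,j-1}<T_{1,j}\,\}$, the range of $j$ being $2\le j\le s+1$ if $r>s$ and $2\le j\le s$ if $r=s$. Plugging this into $b_{(r,s)}(q)=\sum_{T\in\Tab_{[n]}(r,s)}c_q(T)$ and expanding $(1+q)^{d(T)}=\sum_k\binom{d(T)}{k}q^k$, the theorem reduces to the claim that, for each $k$, the number of pairs $(T,S)$ with $T\in\Tab_{[n]}(r,s)$ and $S$ a $k$-element subset of the columns counted by $d(T)$ equals $f_{(r+k,s-k)}$ (resp. $f_{(r+k,s-1-k)}$ when $r=s$). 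By Theorems~\ref{theorem:fibres} and~\ref{theorem:statistic} these pairs are precisely the set partitions in $\Pi_n(r,s)$ with interlacing number $k$, so the proof amounts to a bijection sending $(T,S)$ to a standard tableau of the stated shape.

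Before constructing it I would dispose of the case $r=s$: the largest entry of any $T\in\Tab_{[n]}(s,s)$ sits in the unique removable corner $(2,s)$, and deleting that cell is a bijection $\Tab_{[2s]}(s,s)\to\Tab_{[2s-1]}(s,s-1)$ which does not change the set of columns counted by $d$; so it is enough to treat $r>s$. For $r>s$ the bijection is by \emph{promotion}. Given $(T,S)$ with $S=\{j_1<\dots<j_k\}$ among the columns counted by $d(T)$, form $T'$ by moving each entry $T_{2,j_\ell}$ from the second row to the first: the first row of $T'$ is the first row of $T$ together with $\{T_{2,j_\ell}\}_\ell$, sorted, and the second row of $T'$ is the second row of $T$ with those entries deleted. (In the standard lattice-path model of a two-row standard tableau this is exactly ``turn each marked vertical step that lands on the diagonal into a horizontal step''.) That $T'$ is standard of shape $(r+k,s-k)$ is immediate: the inequalities $T_{1,j_\ell}<T_{2,j_\ell}<T_{1,j_\ell+1}$ guaranteed by column $j_\ell$ being counted keep the merged first row increasing, and replacing vertical steps by horizontal steps only increases every partial count ``(number of horizontal steps) minus (number of vertical steps)'', so the ballot condition is preserved.

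The real work is the inverse. Given $T'\in\Tab_{[n]}(r+k,s-k)$ one must identify the $k$ entries of its first row that came from the second row of $T$. Such an entry always lies strictly between two consecutive original first-row entries, and when pushed back down it must land in the second row at the index forced by its position in the first row of $T'$, in a column that is ``counted by $d$'' for the reconstructed $T$. I expect to prove that these requirements determine the $k$ entries uniquely, by undoing the promotions one at a time in a forced order — say, reversing the rightmost marked return first — and verifying at each stage that exactly one first-row entry is eligible and that removing it undoes one step of the forward map. Making this eligibility criterion precise, in particular controlling how the promoted entries interleave with the unmarked diagonal contacts of the path, is the single substantive obstacle; once it is in place one gets $\#\{(T,S):|S|=k\}=f_{(r+k,s-k)}$, hence the formula for $r>s$, and the $r=s$ case follows from the reduction above. (One could instead check that $\sum_k f_{(r+k,s-k)}q^k$ satisfies the recursion of Theorem~\ref{theorem:rec-bla}, but that would forgo the bijection the theorem is really about.)
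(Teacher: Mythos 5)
Your reduction is correct and lines up with the paper's starting point: for a two-row tableau every $c_{2j}(T)$ equals $1$ and $c_{1j}(T)\in\{1,2\}$, so $c_q(T)=(1+q)^{d(T)}$, the coefficient of $q^k$ in $b_{(r,s)}(q)$ counts pairs $(T,S)$ with $|S|=k$, and the case $r=s$ reduces to $(r,r-1)$ exactly as you say (the paper opens its proof with the same observation that $b_{(r,r)}=b_{(r,r-1)}$). Your forward map is also, in substance, the paper's bijection: in the lattice-path picture a counted column is precisely an SE step that returns the path to level $0$, and promoting the corresponding second-row entry is flipping that SE step to NE, which is what the paper does in step 4 of its construction of $\Theta$. (One indexing slip: with the paper's convention the counted columns are the $j$ with $T_{2,j-1}<T_{1j}$, so the entry you move up is $T_{2,j-1}$, not $T_{2,j}$; your displayed inequalities show you are implicitly re-indexing by $j\mapsto j-1$, which is harmless once stated.) Where the paper routes through Eq.~\eqref{eq:blamset}, chord diagrams, and Lemma~\ref{lemma:decomposition} to cut a set partition at its interlacings, you work directly with the pairs $(T,S)$; that is a legitimate shortcut.

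The genuine gap is exactly where you flag it: you define a map into $\Tab_{[n]}(r+k,s-k)$ but never construct its inverse, so you have not shown it is a bijection, and this is the one step the paper actually has to work for. The paper's resolution is a concrete recipe you should adopt in place of your ``forced order, exactly one eligible entry'' sketch: given $T'$ of shape $(r+k,s-k)$, pass to its prefix Dyck path and, for each $j=1,\dots,k$, mark the \emph{rightmost} NE step rising from level $2j-1$ to level $2j$; flip those $k$ steps back to SE. The word ``rightmost'' is what makes everything check out: since the path ends at height $r-s+2k>2j-1$, after the rightmost rise $2j-1\to 2j$ it never revisits level $2j-1$ (otherwise it would have to make that rise again later), so it stays at height at least $2j$ there; hence after the flip subtracts $2$ from all subsequent levels for each of the first $j$ flips, the path remains nonnegative and each flipped step becomes a genuine return to $0$, i.e., a counted column of the reconstructed $T$. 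The same argument forces the $k$ marked positions to occur left to right, which gives well-definedness and makes the two maps mutually inverse. With that inserted, your argument closes and is essentially the proof of Theorem~\ref{theorem:two-row-bij}.
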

\begin{remark}
  For all integers $r\geq s\geq0$,
  \begin{align*}
    f_{(r,s)}=\frac{r+1-s}{r+s+1}{r+s+1 \choose s},
  \end{align*}
  see \cite[Cor. 10.3.2]{MR3409351}.
\end{remark}
\begin{proof}
  Note that $b_{(r,r)}(q) = b_{(r,r-1)}(q)$, since, for every tableau $T$ of shape $(r,r)$, $c_q(T)$ remains unchanged when the cell containing $2r$ is deleted from $T$.
  Therefore, it suffices to consider the case where $r>s$.
  By Eq.~\eqref{eq:blamset}, we need to show that set partitions of shape $(r,s)$ with interlacing number $i$ are in bijection with standard tableaux of shape $(r+i,s-i)$.
  Such a bijection is provided by Theorem~\ref{theorem:two-row-bij}.
\end{proof}
The bijection rests on the following lemma.
\begin{lemma}
  \label{lemma:decomposition}
  Let $\AA=a_1\dotsb a_l|b_1\dotsb b_m$ be a set partition such that the $j$th arcs of the two blocks cross.
  Then $\{a_1,\dotsc,a_j\}\cup \{b_1,\dotsc,b_j\}=[2j]$.
\end{lemma}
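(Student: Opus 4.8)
The plan is to unpack the hypothesis that the $j$th arcs of the two blocks cross, and then use an extremal/minimality argument on the elements below $b_{j+1}$. Recall that $\arc_j(A_1) = (a_j, a_{j+1})$ and $\arc_j(A_2) = (b_j, b_{j+1})$ (with the convention that $a_{l+1} = \infty$ if $j = l$, and similarly for the second block). Since $\AA$ is in standard notation, the blocks are ordered so that $a_1 < b_1$; hence the only way two arcs with the same index $j$ can cross is $a_j < b_j < a_{j+1} < b_{j+1}$.

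First I would set up the claim as an induction on $j$. For the base case $j = 1$: the crossing condition reads $a_1 < b_1 < a_2 < b_2$, and since $a_1$ is the least element of the whole set partition (standard notation), $a_1 = 1$; since $b_1$ is the least element not in the first block and everything strictly between $a_1$ and $a_2$ that lies in blocks $1$ or $2$ must be $b_1$ (the only candidate), while nothing can lie strictly below $a_1$, we must have $\{a_1, b_1\} = \{1, 2\} = [2]$. For the inductive step, assume $\{a_1, \dotsc, a_{j-1}\} \cup \{b_1, \dotsc, b_{j-1}\} = [2j-2]$ and that the $j$th arcs cross, i.e. $a_j < b_j < a_{j+1} < b_{j+1}$. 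Every integer strictly between $2j-2$ and $b_{j+1}$ must belong to $\{a_j, b_j\}$: indeed, any such integer lies in block $1$ or block $2$ (this is the point that needs the two-block hypothesis — there are no other blocks to absorb stray integers), and within block $1$ the only element in the open interval $(a_{j-1}, a_{j+1})$ is $a_j$, while within block $2$ the only element in $(b_{j-1}, b_{j+1})$ is $b_j$. The ordering $2j - 2 < a_j < b_j < a_{j+1} \le b_{j+1}$ together with $a_j < b_j < a_{j+1}$ forces $a_j = 2j - 1$ and $b_j = 2j$, so $\{a_1, \dotsc, a_j\} \cup \{b_1, \dotsc, b_j\} = [2j]$.

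The step I expect to be the main obstacle is the bookkeeping in the inductive step: one must argue carefully that \emph{no element of either block} can fall strictly between $2j-2$ and $b_{j+1}$ except $a_j$ and $b_j$, using that the elements of each block are listed in increasing order, so $a_{j-1} < a_j < a_{j+1}$ leaves exactly one slot, and likewise for $b$. One must also handle the boundary convention cleanly: if $j$ equals the length $l$ of the first block, then $a_{j+1} = \infty$, and the crossing condition becomes $a_j < b_j < \infty < b_{j+1}$ — wait, that cannot happen, so in fact the crossing of the $j$th arcs with $j = l$ forces $b_j < \infty$ but also $a_{j+1} = \infty$ would need $a_{j+1} < b_{j+1}$, impossible; hence implicitly $j < l$, or if $j = m$ is the last arc of the second block then $b_{j+1} = \infty$ and the conclusion $\{a_1,\dots,a_j\}\cup\{b_1,\dots,b_j\} = [2j]$ means these blocks jointly exhaust $[2j]$ and then continue past it. I would state at the outset that the crossing condition $a_j < b_j < a_{j+1} < b_{j+1}$ already guarantees $a_{j+1}, b_{j+1} \in \nat$ (both finite), which removes the $\infty$ subtleties, and then run the induction as above. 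The whole argument is short once this observation is made; the essential content is simply that two interleaved increasing sequences whose first $2j$ terms interleave must, if they start at $1$, occupy exactly $\{1, \dotsc, 2j\}$.
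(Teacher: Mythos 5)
There are two genuine gaps here, and the second one breaks the structure of the argument. First, your reduction to the single case $a_j<b_j<a_{j+1}<b_{j+1}$ is wrong: standard notation only guarantees $a_1<b_1$, not $a_j<b_j$ for every $j$. For instance, in $\AA=146|235$ the second arcs $(4,6)$ and $(3,5)$ cross, but in the pattern $b_2<a_2<b_3<a_3$. The paper keeps both cases (either $a_j<b_j<a_{j+1}<b_{j+1}$ or $b_j<a_j<b_{j+1}<a_{j+1}$) and uses only what they have in common, namely that each of $a_j$ and $b_j$ is less than both $a_{j+1}$ and $b_{j+1}$. Your single case could be repaired by symmetry, but as written the claim that standard notation forces the first pattern is false.

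Second, and more seriously, your induction is not valid. The inductive hypothesis you invoke, $\{a_1,\dotsc,a_{j-1}\}\cup\{b_1,\dotsc,b_{j-1}\}=[2j-2]$, is the conclusion of the lemma for $j-1$, and that conclusion is only available when the $(j-1)$st arcs cross --- which does not follow from the $j$th arcs crossing. Take $\AA=125|346$: the second arcs $(2,5)$ and $(4,6)$ cross, and the lemma correctly gives $\{1,2\}\cup\{3,4\}=[4]$; but the first arcs $(1,2)$ and $(3,4)$ are disjoint and $\{a_1\}\cup\{b_1\}=\{1,3\}\neq[2]$, so your inductive step has nothing to stand on (your closing remark about the ``first $2j$ terms interleaving'' fails for the same example). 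No induction is needed: crossing of the $j$th arcs forces $\max(a_j,b_j)<\min(a_{j+1},b_{j+1})$, so by monotonicity of the two increasing sequences the $2j$ elements $a_1,\dotsc,a_j,b_1,\dotsc,b_j$ are all smaller than every remaining element of the ground set $[l+m]$, hence they are exactly $[2j]$. That one-line argument is the paper's proof.
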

\begin{proof}
  If the $j$th arcs cross, then either $a_j<b_j<a_{j+1}<b_{j+1}$, or $b_j<a_j<b_{j+1}<a_{j+1}$.
  In either case, each of $a_j$ and $b_j$ is less than both $a_{j+1}$ and $b_{j+1}$.
  Thus all the elements of $\{a_1,\dotsc,a_j\}\cup\{b_1,\dotsc,b_j\}$ are less than the remaining elements of $[r+s]$, and hence must be the elements of $[2j]$ (see the first row of Figure~\ref{fig:steps}).
\end{proof}
\begin{theorem}
  \label{theorem:two-row-bij}
  There exists a bijection $\Theta$ that maps the collection of set partitions of shape $(r,s)$ with interlacing number $i$ bijectively onto the set of standard Young tableaux of shape $(r+i,s-i)$ when $r>s$.
\end{theorem}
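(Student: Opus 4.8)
The plan is to work with the two-block picture throughout. First, a standard Young tableau of shape $(\alpha,\beta)$ with $\alpha\ge\beta$ is the same datum as a two-block set partition $R_1\mid R_2$ of $[\alpha+\beta]$ with $|R_1|=\alpha$, $|R_2|=\beta$ in which $R_1$ dominates $R_2$ — the $k$-th smallest element of $R_1$ is smaller than the $k$-th smallest of $R_2$ for every $k\le\beta$, which is exactly column-strictness — and such a pair is the same as a nonnegative $\pm1$-path (ballot sequence) of length $\alpha+\beta$ with $\beta$ down-steps. So the goal is a bijection from the set partitions of shape $(r,s)$ with interlacing number $i$ onto the nonnegative $\pm1$-paths of length $n=r+s$ with $s-i$ down-steps. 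Encode a set partition $\AA=a_1\cdots a_l\mid b_1\cdots b_m$ of shape $(r,s)$ as the $\pm1$-path $P(\AA)$ of length $n$ whose $k$-th step is $+1$ if $k$ lies in the larger block and $-1$ otherwise; then $P(\AA)$ has $r$ up-steps, $s$ down-steps, and terminal height $r-s>0$, although it may dip below $0$. Using Lemma~\ref{lemma:decomposition} — a crossing of the $j$-th arcs forces $[2j]$ to be split evenly, hence forces $P(\AA)$ to return to height $0$ at time $2j$ — together with the two cases of Definition~\ref{definition:interlacing}, I would show that $v(\AA)$ equals the number of \emph{bounces} of $P(\AA)$, i.e.\ interior times at which $P(\AA)$ touches $0$ and returns to the side it came from. (When $P(\AA)$ is at height $0$ at time $2j$ one reads off the order of $a_j$ and $b_j$ from the sign of the step arriving at $2j$, and which of $a_{j+1},b_{j+1}$ equals $2j+1$ from the sign of the step leaving $2j$; checking the four possibilities shows the $j$-th arcs cross exactly when the two excursions meeting at $2j$ lie on the same side.)

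The bijection $\Theta$ straightens $P(\AA)$ in two stages. Stage one reflects each maximal negative excursion across the axis, producing a nonnegative path $\hat P$ with the same $s$ down-steps and terminal height, and with the $i$ bounce points still zero-returns of $\hat P$. Stage two visits those $i$ marked zero-returns from right to left and at each one changes the down-step arriving there into an up-step; this deletes that zero-return, lifts the rest of the path by $2$, removes one down-step, and — since such a bounce is an even position held by an element of the current smaller block with an element of the current larger block immediately to its right — amounts to transferring one element from the smaller to the larger block, so the shape changes from $(r',s')$ to $(r'+1,s'-1)$. After all $i$ moves one is left with a nonnegative path of length $n$ with $s-i$ down-steps and terminal height $r-s+2i$, which is the ballot path of a standard tableau of shape $(r+i,s-i)$; one can verify the recipe directly on small shapes. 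The inverse should undo the two stages: from a nonnegative path $Q$ with $d$ down-steps put $i=s-d$, turn the appropriate $i$ up-steps of $Q$ back into down-steps to recreate the marked bounces (recovering $\hat P$), and then un-reflect those positive excursions of $\hat P$ whose reflection restores the original terminal height and interlacing pattern.

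I expect the main obstacle to be the reversibility of stage two. After stage one every interior zero-return of $\hat P$ is a bounce, so the forward procedure must be arranged — and the inverse given a canonical rule, say a greedy leftmost one — so that precisely the $i$ genuine bounces are undone and no others. The safest way to control this is probably to recast the whole construction intrinsically on the arc diagram, as iterating a single move: take the largest level $j$ with crossing $j$-th arcs, use Lemma~\ref{lemma:decomposition} to split $[2j]$ evenly, transfer the element $2j$ from the shorter block to the longer one, and reflect any negative portion this creates; then prove by induction on $i$, through a local analysis of the arcs at levels below and above $j$, that the move deletes exactly one interlacing, changes the shape by $(+1,-1)$, and is inverted by the obvious re-crossing move, with the base case $i=0$ handled directly (or via Corollary~\ref{cor:noninterl}). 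The remaining bookkeeping — the case where $1$ lies in the smaller block, so $P(\AA)$ starts downward, and verifying the down-step counts and terminal heights — is then routine.
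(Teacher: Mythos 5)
Your overall strategy is sound and, once unwound, is quite close to the paper's own proof: the paper also converts the partition into a lattice path, uses Lemma~\ref{lemma:decomposition} to locate the interlacings at even times where the path returns to height zero, flips one down-step into an up-step per interlacing, and inverts by finding, for each $j$, the \emph{rightmost} up-step from level $2j-1$ to level $2j$. Your ``interlacing $=$ bounce'' characterization of $v(\AA)$ is correct (at a zero-return of $P(\AA)$ at time $2j$, the $j$th arcs cross exactly when $2j$ and $2j+1$ lie in different blocks, i.e.\ exactly when the path bounces), and your stage one replaces the paper's detour through $\T$ and Corollary~\ref{cor:noninterl} by a reflection of negative excursions, which is a legitimate and arguably more direct treatment of the non-interlacing background.

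However, as written the proposal does not establish bijectivity: the two inverse rules are exactly where the content lies, and both are left open or mis-stated. For stage two the canonical inverse is forced, not chosen: the step flipped at the $j$th marked zero-return (counting from the left) becomes an up-step from level $2j-1$ to level $2j$ in the final path $Q$, and since $Q$ stays at height at least $2j$ from that moment on, it is the \emph{rightmost} such up-step. Your suggested ``greedy leftmost'' rule fails: for $Q=UUDUU$ with $i=1$, flipping back the leftmost up-step from level $1$ to level $2$ yields $UDDUU$, which dips below zero, instead of the correct $UUDDU$. For stage one, ``un-reflect those excursions whose reflection restores the interlacing pattern'' is circular as stated; what makes it work is that the sign $\epsilon_k$ of the $k$th excursion of $P$ is recoverable by backward induction from the bounce set (the final excursion is positive, and $\epsilon_k=\epsilon_{k+1}$ if and only if the $k$th zero-return is marked), so that $P\mapsto(\hat P,\text{bounce set})$ is a bijection onto pairs consisting of a nonnegative path and an arbitrary $i$-element subset of its interior zero-returns. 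With these two rules supplied your construction goes through, and the fallback induction on arc diagrams becomes unnecessary.
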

\begin{proof}
  The construction of $\Theta$ involves the following five steps, illustrated by Figure~\ref{fig:steps}.
  \begin{figure}[tb]
    \begin{displaymath}
      \xymatrix{
        \begin{tikzpicture}
          [scale=0.7,every node/.style={circle,fill=black, scale=0.4}]
          \tikzset{minimum size=2pt}
          \node[label=below:$1$] (1) at (1,0) {};
          \node[label=below:$2$] (2) at (2,0) {};
          \node[label=below:$3$] (3) at (3,0) {};
          \node[label=below:$4$] (4) at (4,0) {};
          \node[label=below:$5$] (5) at (5,0) {};
          \node[label=below:$6$] (6) at (6,0) {};
          \node[label=below:$7$] (7) at (7,0) {};
          \node[label=below:$8$] (8) at (8,0) {};
          \node[label=below:$9$] (9) at (9,0) {};
          \node[label=below:$10$] (10) at (10,0) {};
          \node[label=below:$11$] (11) at (11,0) {};
          \node[label=below:$12$] (12) at (12,0) {};
          \node[label=below:$13$] (13) at (13,0) {};
          \node[label=below:$14$] (14) at (14,0) {};
          \node[label=below:$\infty$] (0) at (15,0) {};
          \draw[thick,color=teal]
          (1) [out=45, in=135] to  (2);
          \draw[thick,color=orange]
          (2) [out=45, in=135] to  (6);
          \draw[thick,color=blue]
          (6) [out=45, in=135] to  (8);
          \draw[thick,color=teal]
          (8) [out=45, in=135] to  (9);
          \draw[thick,color=orange]
          (9) [out=45, in=135] to  (11);
          \draw[thick,color=blue]
          (11) [out=45, in=135] to  (12);
          \draw[thick,color=teal]
          (12) [out=45, in=135] to  (14);
          \draw[thick,color=teal]
          (3) [out=45, in=135] to  (4);
          \draw[thick,color=orange]
          (4) [out=45, in=135] to  (5);
          \draw[thick,color=blue]
          (5) [out=45, in=135] to  (7);
          \draw[thick,color=teal]
          (7) [out=45, in=135] to  (10);
          \draw[thick,color=orange]
          (10) [out=45, in=135] to  (13);
          \draw[thick,color=blue]
          (13) [out=45, in=135] to  (0);
          \draw (6.5,-1) -- (6.5,1);
          \draw (10.5,-1) -- (10.5,1);
        \end{tikzpicture}\ar[d]^(.2){1}\\
        \begin{tikzpicture}
          [scale=0.7,every node/.style={circle,fill=black, scale=0.4}]
          \tikzset{minimum size=2pt}
          \node[label=below:$1$] (1) at (1,0) {};
          \node[label=below:$2$] (2) at (2,0) {};
          \node[label=below:$3$] (3) at (3,0) {};
          \node[label=below:$4$] (4) at (4,0) {};
          \node[label=below:$5$] (5) at (5,0) {};
          \node[label=below:$6$] (6) at (6,0) {};
          \node[label=below:$1$] (7) at (7,0) {};
          \node[label=below:$2$] (8) at (8,0) {};
          \node[label=below:$3$] (9) at (9,0) {};
          \node[label=below:$4$] (10) at (10,0) {};
          \node[label=below:$1$] (11) at (11,0) {};
          \node[label=below:$2$] (12) at (12,0) {};
          \node[label=below:$3$] (13) at (13,0) {};
          \node[label=below:$4$] (14) at (14,0) {};
          \node[label=below:$\infty$] (0) at (15,0) {};
          \draw[thick,color=teal]
          (1) [out=45, in=135] to  (2);
          \draw[thick,color=orange]
          (2) [out=45, in=135] to  (6);
          \draw[thick,color=teal]
          (8) [out=45, in=135] to  (9);
          \draw[thick,color=blue]
          (11) [out=45, in=135] to  (12);
          \draw[thick,color=teal]
          (12) [out=45, in=135] to  (14);
          \draw[thick,color=teal]
          (3) [out=45, in=135] to  (4);
          \draw[thick,color=orange]
          (4) [out=45, in=135] to  (5);
          \draw[thick,color=teal]
          (7) [out=45, in=135] to  (10);
          \draw[thick,color=blue]
          (13) [out=45, in=135] to  (0);
          \draw (6.5,-1) -- (6.5,1);
          \draw (10.5,-1) -- (10.5,1);
        \end{tikzpicture}\ar[d]^{2}\\
        \ytableausetup{smalltableaux}
        \ytableaushort{125,346}\quad \ytableaushort{13,24} \quad \ytableaushort{124,3}\ar[d]^3\\
        \begin{tikzpicture}[scale=0.3]
          \filldraw(0,0) circle (2pt);
          \draw (0,0)--(1,1);
          \filldraw (1,1) circle (2pt);
          \filldraw(0,0) circle (2pt);
          \draw (1,1)--(2,2);
          \filldraw (2,2) circle (2pt);
          \filldraw(0,0) circle (2pt);
          \draw (2,2)--(3,1);
          \filldraw (3,1) circle (2pt);
          \filldraw(0,0) circle (2pt);
          \draw (3,1)--(4,0);
          \filldraw (4,0) circle (2pt);
          \filldraw(0,0) circle (2pt);
          \draw (4,0)--(5,1);
          \filldraw (5,1) circle (2pt);
          \filldraw(0,0) circle (2pt);
          \draw[color=red] (5,1)--(6,0);
          \filldraw (6,0) circle (2pt);
        \end{tikzpicture}
        \quad
        \begin{tikzpicture}[scale=0.3]
          \filldraw(0,0) circle (2pt);
          \draw (0,0)--(1,1);
          \filldraw (1,1) circle (2pt);
          \filldraw(0,0) circle (2pt);
          \draw (1,1)--(2,0);
          \filldraw (2,0) circle (2pt);
          \filldraw(0,0) circle (2pt);
          \draw (2,0)--(3,1);
          \filldraw (3,1) circle (2pt);
          \filldraw(0,0) circle (2pt);
          \draw[color=red] (3,1)--(4,0);
          \filldraw (4,0) circle (2pt);
        \end{tikzpicture}
        \quad
        \begin{tikzpicture}[scale=0.3]
          \filldraw(0,0) circle (2pt);
          \draw (0,0)--(1,1);
          \filldraw (1,1) circle (2pt);
          \filldraw(0,0) circle (2pt);
          \draw (1,1)--(2,2);
          \filldraw (2,2) circle (2pt);
          \filldraw(0,0) circle (2pt);
          \draw (2,2)--(3,1);
          \filldraw (3,1) circle (2pt);
          \filldraw(0,0) circle (2pt);
          \draw (3,1)--(4,2);
          \filldraw (4,2) circle (2pt);
        \end{tikzpicture}\ar[dd]^(.25)4\\
        \\
        \begin{tikzpicture}[scale=0.3]
          \filldraw(0,0) circle (2pt);
          \draw (0,0)--(1,1);
          \filldraw (1,1) circle (2pt);
          \filldraw(0,0) circle (2pt);
          \draw (1,1)--(2,2);
          \filldraw (2,2) circle (2pt);
          \filldraw(0,0) circle (2pt);
          \draw (2,2)--(3,1);
          \filldraw (3,1) circle (2pt);
          \filldraw(0,0) circle (2pt);
          \draw (3,1)--(4,0);
          \filldraw (4,0) circle (2pt);
          \filldraw(0,0) circle (2pt);
          \draw (4,0)--(5,1);
          \filldraw (5,1) circle (2pt);
          \filldraw(0,0) circle (2pt);
          \draw[color=red] (5,1)--(6,2);
          \filldraw (6,2) circle (2pt);
          \filldraw(0,0) circle (2pt);
          \draw (6,2)--(7,3);
          \filldraw (7,3) circle (2pt);
          \filldraw(0,0) circle (2pt);
          \draw (7,3)--(8,2);
          \filldraw (8,2) circle (2pt);
          \filldraw(0,0) circle (2pt);
          \draw (8,2)--(9,3);
          \filldraw (9,3) circle (2pt);
          \filldraw(0,0) circle (2pt);
          \draw[color=red] (9,3)--(10,4);
          \filldraw (10,4) circle (2pt);
          \filldraw(0,0) circle (2pt);
          \draw (10,4)--(11,5);
          \filldraw (11,5) circle (2pt);
          \filldraw(0,0) circle (2pt);
          \draw (11,5)--(12,6);
          \filldraw (12,6) circle (2pt);
          \filldraw(0,0) circle (2pt);
          \draw (12,6)--(13,5);
          \filldraw (13,5) circle (2pt);
          \filldraw(0,0) circle (2pt);
          \draw (13,5)--(14,6);
          \filldraw (14,6) circle (2pt);
        \end{tikzpicture}\ar[d]^5\\
        \ytableaushort{125679{10}{11}{12}{14},348{13}}
      }
    \end{displaymath}
    \caption{The five-step construction of $\Theta$.}
    \label{fig:steps}
  \end{figure}
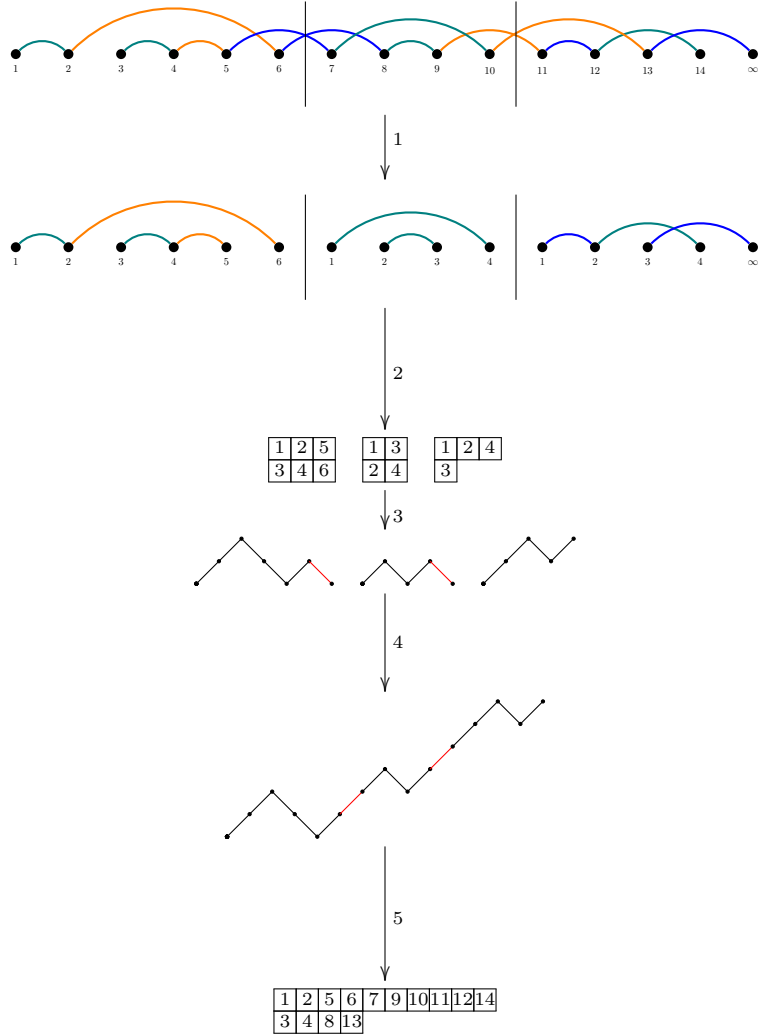
  Begin with the chord diagram of a set partition of shape $(r,s)$ and interlacing number $i$.
  \renewcommand{\thesubsubsection}{\arabic{subsubsection}}
  \subsubsection{Cut chord diagram at interlacings} 
  By Lemma~\ref{lemma:decomposition}, $i+1$ segments are obtained, each consisting of a contiguous set of integers.
  If a segment has size $l$ then renumber its nodes $1,\dotsc,l$.
  Each segment represents a non-interlacing set partition with two blocks of the same size, with the exception of the last segment.
  To reverse this process, attach the block containing the largest element of each segment to the block with the larger least element of the next segment.
  \subsubsection{Replace each non-interlacing set partition by its tableau}
  By Corollary~\ref{cor:noninterl}, $\mathcal A\mapsto \T(\mathcal A)$ is a bijective correspondence from non-interlacing set partitions to standard tableaux.
  \subsubsection{Replace each tableau with its (prefix) Dyck path}
  Given a two-row tableau $T$ of size $n=r+s$, if $j\in [n]$ occurs in the first (resp. second) row of $T$, then the $j$th step of the associated path is NE (resp. SE).
  This gives rise to a bijection from two-row standard tableau to prefix Dyck paths.
  Tableaux with rows of the same length give rise to Dyck paths.
  Thus a sequence of $i$ non-empty Dyck paths, followed by a non-empty prefix Dyck path is obtained.
  \subsubsection{Change the last step of each Dyck path from SE to NE and concatenate}
  In the first $i$ Dyck paths, the last step is always SE.
  Replace it with a NE step.
  Concatenate the resulting $i+1$ prefix Dyck paths, and record the number $i$.
  The resulting prefix Dyck path will end at height $2i+r-s$.

  This step can be reversed if we know the number $i$ of Dyck paths in the original sequence of Dyck paths.
  For $1\leq j\leq i$ look for the right-most NE step going from level $2j-1$ to level $2j$.
  Cut the prefix Dyck path after each of these steps, and change the NE step to SE to obtain a sequence of $i$ non-empty Dyck paths, followed by a final prefix Dyck path.
  \subsubsection{Replace the prefix Dyck path by its tableau}
  Since the prefix Dyck path ends at level $2i+r-s$, the resulting tableau must have shape $(r+i,s-i)$.
\end{proof}


\section{Profiles of subspaces}
\subsection{Existence of universal formulas}
\label{sub:polynomiality} 
We first show that every partial profile is an integer partition.
\begin{lemma}
  \label{lem:ispartition}
  Suppose a linear operator $\Delta$ on $\Fq^n$ admits a subspace $W$ that has partial profile $\mu=(\mu_1,\dotsc,\mu_k)$.
  Then $\mu_1\geq \mu_2\geq \dotsb \geq \mu_k$.
\end{lemma}
\begin{proof}
  Let $W\subset\Fq^n$ have partial $\Delta$-profile $\mu$.  Let $U_0=(0)$ and $U_j=W+\Delta W\cdots+\Delta^{j-1}W$ for each $j\geq 1$. Then $\mu_j=\dim(U_j/U_{j-1})$. Since  $\Delta(U_j)\subseteq U_{j+1}$ and $U_{j+1}=U_j+\Delta U_j$, the map from the quotient space $U_j/U_{j-1}$ to $U_{j+1}/U_j$ defined by  $\alpha+U_{j-1} \mapsto \Delta\alpha+U_j$ is well-defined and surjective for each $j\geq 1$. Therefore $\mu_j\geq \mu_{j+1}$ as claimed.
  \end{proof}
  Recall the following notation from \cite{agram2022} (following \cite{sscffa}).
\begin{definition}
  Let $S_1,S_2,\ldots,S_r$ be sets of subspaces of $\Fq^n$. Let $[S_1,\ldots,S_r]_\Delta$ denote the set of all $r$-tuples $(W_1,\ldots,W_r)$ such that
  \begin{align*}
    W_i\in S_i &\quad\mbox{ for } 1\leq i\leq r,\\
    W_i\supset W_{i+1}+\Delta W_{i+1} &\quad \mbox{ for } 1\leq i\leq r-1.
  \end{align*}
\end{definition}
\begin{definition}
For nonnegative integers $a\geq b$, define
  \begin{align*}
    (a,b)_\Delta:=\{W\subset \Fq^n:\dim W=a \mbox{ and }\dim (W\cap \Delta^{-1}W)=b\},
  \end{align*}
    where $\Delta^{-1}W$ denotes the preimage of $W$ under $\Delta$.
\end{definition}
\begin{example}
  $[(8,6),(5,3)]_\Delta$ is the set of all tuples of subspaces $(W_1,W_2)$ such that $W_1\supset W_2+\Delta W_2$, and
  \begin{align*}
    \dim W_1&=8, \quad\dim (W_1\cap \Delta^{-1}W_1)=6,\\
    \dim W_2&=5, \quad \dim (W_2\cap \Delta^{-1}W_2)=3.
  \end{align*}
\end{example}
For each composition $\alpha=(\alpha_1,\ldots,\alpha_k)$ of $n$, let $X^\Delta_\alpha$ denote the number of flags $(0)=W_0\subset \dotsb \subset W_k=\Fq^n$ of $\Delta$-invariant subspaces such that  $\dim(W_j/W_{j-1})=\alpha_j$ for $1\leq j\leq k$. Given integers $a_{ij}$ $(1\leq i\leq r; 1\leq j\leq 2)$, the recurrence of Chen and Tseng \cite[Lemma 2.7]{sscffa} implies that $[(a_{11},a_{12}),\ldots,(a_{r1},a_{r2})]_\Delta$ can be written as a sum 
\begin{displaymath}
  \sum_{\alpha \models n} p_\alpha(q)X^\Delta_\alpha
\end{displaymath}
taken over all compositions $\alpha$ of $n$ for some polynomials $p_\alpha(t)\in \ZZ[t]$  independent of $q$ and $\Delta$.  
\begin{proposition}
  \label{pr:givendimensions}
  Given a weakly decreasing sequence  $\mu=(\mu_1,\ldots,\mu_k)$ of nonnegative integers, let $m_i=\mu_1+\cdots+\mu_i$ for $1\leq i\leq k$. Then 
  \begin{equation}
    \label{eq:dims}
|[(m_{k-1},m_{k-1}-\mu_k),(m_{k-2},m_{k-2}-\mu_{k-1}),\ldots, (m_{1},m_{1}-\mu_2)]_\Delta|    
  \end{equation}
is equal to the number of subspaces with partial $\Delta$-profile $\mu$.
\end{proposition}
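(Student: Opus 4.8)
The plan is to produce an explicit bijection between the set of subspaces $W\subseteq\Fq^n$ having partial $\Delta$-profile $\mu$ and the set displayed in \eqref{eq:dims}. The only linear-algebra input is the identity
\[
  \dim(V+\Delta V)=2\dim V-\dim(V\cap\Delta^{-1}V),
\]
valid for every subspace $V$ and every operator $\Delta$: apply rank--nullity to the map $V\to\Fq^n/V$, $v\mapsto\Delta v+V$, whose kernel is $V\cap\Delta^{-1}V$ and whose image is $(V+\Delta V)/V$. Equivalently, $V\in(a,b)_\Delta$ exactly when $\dim V=a$ and $\dim(V+\Delta V)=2a-b$; I will use this repeatedly to translate between the two descriptions of a subspace.

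First I would define the forward map. Given $W$ with partial $\Delta$-profile $\mu$, put $U_j=W+\Delta W+\dotsb+\Delta^{j-1}W$, so that $\dim U_j=m_j$ by definition of the profile, $U_1\subseteq U_2\subseteq\dotsb\subseteq U_k$, and $U_{j+1}=U_j+\Delta U_j$ directly from the definitions. Send $W$ to the tuple $(U_{k-1},U_{k-2},\dotsc,U_1)$. Writing $W_i=U_{k-i}$ for $1\le i\le k-1$, one finds $\dim W_i=m_{k-i}$ and $\dim(W_i+\Delta W_i)=\dim U_{k-i+1}=m_{k-i+1}=m_{k-i}+\mu_{k-i+1}$, so the identity above gives $W_i\in(m_{k-i},m_{k-i}-\mu_{k-i+1})_\Delta$; and for $1\le i\le k-2$, $W_i=U_{k-i}=U_{k-i-1}+\Delta U_{k-i-1}=W_{i+1}+\Delta W_{i+1}$, which is precisely the containment required by the bracket. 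Hence the tuple lies in \eqref{eq:dims}.

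The candidate inverse sends a tuple $(W_1,\dotsc,W_{k-1})$ in \eqref{eq:dims} to $W:=W_{k-1}$. One composite is immediate: the tuple produced from $W$ has last entry $U_1=W=W_{k-1}$. The other direction is the only real content: given $(W_1,\dotsc,W_{k-1})$ I must show $W=W_{k-1}$ has partial $\Delta$-profile $\mu$ and that the associated spaces $U_j=W+\Delta W+\dotsb+\Delta^{j-1}W$ are exactly $W_{k-j}$. I would prove by induction on $j$ (from $j=1$ up to $j=k-1$) that $U_j=W_{k-j}$ and $\dim U_j=m_j$: the base case is $U_1=W_{k-1}$; for the step (with $1\le j\le k-2$), $U_{j+1}=U_j+\Delta U_j=W_{k-j}+\Delta W_{k-j}$ has dimension $m_{j+1}$ because $W_{k-j}\in(m_j,m_j-\mu_{j+1})_\Delta$, while the chain condition puts it inside $W_{k-j-1}$, which also has dimension $m_{j+1}$; hence $U_{j+1}=W_{k-j-1}$. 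Applying the identity once more to $W_1=U_{k-1}\in(m_{k-1},m_{k-1}-\mu_k)_\Delta$ gives $\dim U_k=m_k$, so all the profile dimensions come out right.

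I do not expect a real obstacle; the delicate bookkeeping is to keep the three indexings consistent — the parts $\mu_j$, the partial sums $m_j$, and the reversed bracket position $i=k-j$ — and to verify that the range of the chain condition lines up correctly at both ends of the tuple. (The case $k=1$, where the bracket is empty, is trivial and can be handled separately.)
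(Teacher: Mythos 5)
Your proposal is correct and follows essentially the same route as the paper: both rest on the identity $\dim(V+\Delta V)=2\dim V-\dim(V\cap\Delta^{-1}V)$ and on the observation that the dimension constraints force the containments $W_{i}\supset W_{i+1}+\Delta W_{i+1}$ to be equalities, so that the tuple is determined by its smallest member. Your write-up merely makes the resulting bijection and the index bookkeeping more explicit than the paper does.
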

\begin{proof}
Use the identity $\dim (W+\Delta W)=2\dim W-\dim (W\cap \Delta^{-1}W).$ By definition, the expression in \eqref{eq:dims} is equal to the number of $(k-1)$-tuples of subspaces $(W_{k-1},\ldots,W_1)$ such that
  \begin{align*}
    \dim W_i&=m_i &\mbox{ for }1\leq i\leq k-1,\\
    \dim (W_i+\Delta W_i)&=2m_i-(m_i-\mu_{i+1})=m_{i+1} &\mbox{ for }1\leq i\leq k-1,\\
    W_{i+1}&\supset W_i+\Delta W_i &\mbox{ for } 1\leq i\leq k-2.
  \end{align*}
  It follows from the dimensional constraints above that $W_{i+1}=W_i+\Delta W_i$ for $1\leq i\leq k-2$. Consequently, $W_i=W_1+\Delta W_1+\cdots +\Delta^{i-1}W_1$ for $1\leq i\leq k-1$. Moreover, if we let $W_k=W_{k-1}+\Delta W_{k-1}$, then we must have $\dim W_k=m_k$ and the proposition follows easily from these observations.
\end{proof}
\begin{corollary}
  \label{corollary:universal}
  For every integer partition $\mu$ and every composition $\alpha$ of $n$, there exists a polynomial $f_{\mu\alpha}(t)\in \ZZ[t]$ such that, for each prime power $q$ and each linear operator $\Delta$ on $\Fq^n$, the number of subspaces with partial $\Delta$-profile $\mu$ is given by
  \begin{displaymath}
    \sigma_n(\mu)=\sum_{\alpha\models n} f_{\mu\alpha}(q) X^\Delta_\alpha.
  \end{displaymath}
\end{corollary}
\begin{proof}
Follows from Proposition \ref{pr:givendimensions}. 
\end{proof}
Note that when $\Delta$ is an $n\times n$ diagonal matrix over $\Fq$ with distinct diagonal entries, $X^\Delta_\alpha$ is the multinomial coefficient ${n \choose \alpha}$, which is independent of $\Delta$.
\label{sec:profiles-subspaces}
\subsection{Counting subspaces by profile}
Let $\Delta$ be a diagonal $n\times n$ matrix over $\Fq$ with distinct entries on its diagonal.
Recall the definition of $b_\lambda(q)$ from Section \ref{sec:polyn-b_lambd}.
\begin{theorem}
  \label{theorem:counting-profile}
  For every integer partition $\mu$, the number of subspaces of $\Fq^n$ with $\Delta$-profile $\mu$ is given by
  \begin{displaymath}
    \sigma_n(\mu)=\binom n{|\mu|}(q-1)^{\sum_{j\geq 2} \mu_j}q^{\sum_{j\geq 2} \binom{\mu_j}2}b_{\mu'}(q).
  \end{displaymath}
\end{theorem}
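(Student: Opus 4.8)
\emph{Overall strategy.} The plan is to strip off the factor $\binom n{|\mu|}$ by a support argument, reducing to a ``full support'' count in $\Fq^N$ with $N=|\mu|$, and then to fibre the full‑support subspaces of a given profile over the standard tableaux of shape $\mu'$, with fibre sizes reproducing $c_q(T)$ up to the stated power of $q$ and of $q-1$.

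\emph{Step 1 (reduction to full support).} Since $\Delta$ is diagonal with distinct eigenvalues, its invariant subspaces are exactly the coordinate subspaces $\Fq^S=\langle e_i:i\in S\rangle$. For any $W$, the chain $W[j]:=W+\Delta W+\dots+\Delta^{j-1}W$ stabilises at the smallest invariant subspace containing $W$, namely $\Fq^{\supp W}$; hence a subspace with $\Delta$-profile $\mu$ necessarily has $|\supp W|=|\mu|=:N$ and $W[k]=\Fq^{\supp W}$ with $k=\ell(\mu)$. Grouping subspaces by support, and noting that the restriction of $\Delta$ to any coordinate subspace of dimension $N$ is again regular split semisimple, hence has one fixed similarity class type, Propositions~\ref{pr:polynomiality} and~\ref{pr:givendimensions} show that the number $f_N(\mu)$ of $W\subseteq\Fq^S$ with $\Delta_S$-profile $\mu$ is independent of $S$ (it is the count of subspaces of partial profile $\mu$, i.e.\ $|[\,\cdot\,]_{\Delta_S}|$, a type invariant). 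Thus $\sigma_n(\mu)=\binom nN f_N(\mu)$, and since $\sum_{j\ge2}\mu_j=N-\mu_1$ it remains to prove $f_N(\mu)=(q-1)^{N-\mu_1}q^{\sum_{j\ge2}\binom{\mu_j}2}b_{\mu'}(q)$.

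\emph{Step 2 (the profile tableau, and the inductive mechanism).} For a full‑support $W\subseteq\Fq^N$ with profile $\mu$, let $C_j$ be the set of pivot positions (in reduced echelon form relative to $e_1,\dots,e_N$) of $W[j]$ that are not pivot positions of $W[j-1]$. Pivot sets are monotone under inclusion, so $C_1,\dots,C_k$ partition $[N]$ with $|C_j|=\mu_j$, and writing each $C_j$ increasingly as the $j$th column gives a tableau $T(W)$ of shape $\mu'$ and support $[N]$ with increasing columns. I would then induct on $k$ to show $T(W)$ is multilinear and that $|\{W:T(W)=T\}|=(q-1)^{N-\mu_1}q^{\sum_{j\ge2}\binom{\mu_j}2}c_q(T)$ for each $T\in\Tab_{[N]}(\mu')$; summing over $T$ and using $b_{\mu'}(q)=\sum_Tc_q(T)$ finishes it. For the induction, fix $T$ with first column $C_1=\{p_1<\dots<p_d\}$, $d=\mu_1$, let $\tilde T$ be $T$ with its first column deleted, and set $V'=\langle e_c:c\notin C_1\rangle$, a $\Delta$-invariant subspace on which $\Delta$ is again regular split semisimple. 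Associate to $W$ the subspace $\widehat W:=(W+\Delta W)\cap V'\subseteq V'$. Writing $W$ in echelon form with rows $m_1,\dots,m_d$, one has $\Delta m_r=\delta_{p_r}m_r+v_r$ with $v_r:=\sum_{c\notin C_1,\ c>p_r}M_{rc}(\delta_c-\delta_{p_r})e_c\in V'$, and more generally $W[j+1]=W+\langle\Delta^s v_r:r\le d,\ 0\le s\le j-1\rangle$. Since $W\cap V'=0$, the modular law gives $W[j+1]\cap V'=\widehat W+\Delta\widehat W+\dots+\Delta^{j-1}\widehat W$; comparing dimensions and pivot sets then shows $\widehat W$ is full‑support in $V'$ with profile $(\mu_2,\dots,\mu_k)$ and $T(\widehat W)=\tilde T$, and that the pivots of $\widehat W$ interleave $C_1$ in the way forcing $T(W)$ to be multilinear. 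Consequently $W\mapsto(\widehat W,(v_r)_r)$ identifies $\{W:T(W)=T\}$ with $\{\widehat W:T(\widehat W)=\tilde T\}$ times the set of tuples $(v_1,\dots,v_d)$, $v_r\in\widehat W\cap\langle e_c:c\notin C_1,\ c>p_r\rangle$, that span $\widehat W$.

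\emph{Step 3 (the local count — the main obstacle, and assembly).} The delicate point is to evaluate the number of such spanning tuples. Here distinctness of eigenvalues enters: each nonzero scalar $\delta_c-\delta_{p_r}$ lets $v_r$ range freely over the coordinate subspace $\langle e_c:c\notin C_1,\ c>p_r\rangle$, so one is counting ordered families that span the $\mu_2$-dimensional space $\widehat W$ compatibly with the flag $\widehat W\supseteq\widehat W\cap\langle e_c:c>p_1\rangle\supseteq\cdots$, whose successive dimensions form the partition conjugate to $(M_i)_{i}$, where $M_i=\#\{r:p_r<(\text{$i$th element of }C_2)\}=c_{i2}(T)+i-1$; a greedy count of basis vectors should give the value $\prod_{i=1}^{\mu_2}\bigl(q^{M_i}-q^{\,i-1}\bigr)$. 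Since this equals $q^{\binom{\mu_2}2}(q-1)^{\mu_2}\prod_{i=1}^{\mu_2}[c_{i2}(T)]_q$, and since $c_q(T)=\bigl(\prod_i[c_{i2}(T)]_q\bigr)c_q(\tilde T)$ while the $q$- and $(q-1)$-exponents attached to $\mu$ exceed those attached to $(\mu_2,\dots,\mu_k)$ by exactly $\binom{\mu_2}2$ and $\mu_2$, the inductive hypothesis $|\{\widehat W:T(\widehat W)=\tilde T\}|=(q-1)^{(N-\mu_1)-\mu_2}q^{\sum_{j\ge3}\binom{\mu_j}2}c_q(\tilde T)$ yields $|\{W:T(W)=T\}|=(q-1)^{N-\mu_1}q^{\sum_{j\ge2}\binom{\mu_j}2}c_q(T)$, with base case $k=1$ ($\mu=(N)$, $W=\Fq^N$, $c_q(T)=1$) trivial. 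I expect Step~1 (type‑invariance) and the identification of $\widehat W$ to be routine though lengthy; the genuinely hard step is the echelon‑form analysis of $W+\Delta W$ and the resulting product formula $\prod_i(q^{M_i}-q^{i-1})$ for the number of admissible spanning tuples.
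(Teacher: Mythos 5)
Your proposal is correct and follows essentially the same route as the paper: peel off the pivot set of $W$ as the first column of a tableau, pass to the derived subspace $\widehat W$ (realised in the paper as the row space of $Y=X\Delta_2-\Delta_1X$, which is exactly your $(W+\Delta W)\cap V'$), show it has profile $(\mu_2,\dotsc,\mu_k)$, and count each fibre by the staircase-rank product $\prod_i\bigl(q^{M_i}-q^{i-1}\bigr)=(q-1)^{\mu_2}q^{\binom{\mu_2}{2}}\prod_i[c_{i2}(T)]_q$. The only differences are organisational: you extract $\binom{n}{|\mu|}$ up front by grouping subspaces by support, whereas the paper proves a recursion for the refined counts $\sigma^C_n(\mu)$ indexed by pivot sets $C\subseteq[n]$ and recovers the binomial factor at the end from $b^n_{\mu'}(q)=\binom{n}{|\mu|}b_{\mu'}(q)$.
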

Theorem~\ref{theorem:counting-profile} is a consequence of a refined version (Theorem \ref{theorem:refined-counting-profile}) for the number of subspaces in a given Schubert cell with a specified profile.

For each positive integer $n$, let $C(n,m)$ denote the set of all subsets of $[n]$ with cardinality $m$.
Given an element $C\in C(n,m)$ we always write $C$ as a tuple
\begin{displaymath} 
  C = (c_1,\dotsc,c_m), \text{ where } c_1<\dotsb<c_m.
\end{displaymath}
\begin{definition}[Pivots]
Every $m$-dimensional subspace $W\subset \Fq^n$ has a unique basis in reduced row echelon form, namely a basis whose elements are the rows of an $m\times n$ matrix $A=(A_{ij})$ satisfying the following conditions:
\begin{enumerate}
\item There exists a tuple $C=(c_1,\dotsc,c_m)\in C(n,m)$ (called the pivots of $W$) such that the first non-zero entry in the $i$th row of $A$ lies in the $c_i$th column and is equal to $1$.
\item $A_{i'c_i}=0$ for all $i'\neq i$ (the only non-zero entry in the $c_i$th column lies in the $i$th row).
\end{enumerate}
When this happens we say that $W$ has pivots $C$.
\end{definition}
\begin{example}
  The basis in reduced row echelon form for a subspace of $\Fq^n$ (with $n\geq 4$) with pivots $(1,3,4)$ is given by the rows of a $3\times n$ matrix of the form
  \begin{displaymath}
    \begin{pmatrix}
      1 & * & 0 & 0 & * & \dotsb & *\\
      0 & 0 & 1 & 0 & * & \dotsb & *\\
      0 & 0 & 0 & 1 & * & \dotsb & *
    \end{pmatrix},
  \end{displaymath}
  where each $*$ denotes an arbitrary element of $\Fq$.
  Moving the pivot columns to the left results in
  \begin{displaymath}
\left(    \begin{array}{ccc|cccc}
      1 & 0 & 0 & * & * & \dotsb & *\\
      0 & 1 & 0 & 0 & * & \dotsb & *\\
      0 & 0 & 1 & 0 & * & \dotsb & *
    \end{array}\right).
  \end{displaymath}
  This is a block matrix of the form $(I\mid X)$, where $I$ is the identity matrix and $X$ satisfies $X_{ij}=0$ unless $j\geq c_i-(i-1)$.
\end{example}
\begin{definition}[Shape of a matrix]
For each element $\alpha=(\alpha_1,\dotsc,\alpha_m)\in [n]^m$, a matrix $X\in M_{m\times n}(\Fq)$ is said to have shape $\alpha$ if $X_{ij}=0$ unless $j\geq \alpha_i$ for $i=1,\dotsc,m$.
\end{definition}
In this article, the shapes of interest are integer partitions.

For each tuple $C\in C(n,m)$, define $C-\delta=(c_1,c_2-1,\dotsc,c_m-(m-1))$.
In general, suppose $W\subset \Fq^n$ is a subspace with pivots $C\in C(n,m)$.
Let $A$ denote the matrix in reduced row echelon form whose row space is $W$.
Moving the pivot columns of $A$ to the left results in a matrix of the form $(I\mid X)$, where $I$ is the $m\times m$ identity matrix and $X$ is an $m\times (n-m)$ matrix with shape $C-\delta$.
It follows that the number of subspaces of $\Fq^n$ with pivots $C$ equals the number of matrices in $M_{m\times (n-m)}(\Fq)$ of shape $C-\delta$.
Thus the following well-known result \cite{MR270933} is obtained.
\begin{theorem}
  \label{theorem:q-bin-coeff}
  The number of $m$-dimensional subspaces of $\Fq^n$ with pivots $C=(c_1,\dotsc,c_m)$ is $q^{\beta_n(C)}$, where
  \begin{displaymath}
    \beta_n(C) :=|\{(c,c') \mid c<c',\,c \in C, \,c'\in [n]-C\}|= \sum_{i=1}^m (n-m-c_i+i).
  \end{displaymath}
\end{theorem}
Note that the subpaces of $\Fq^n$ with pivots $C$ are precisely the elements of the Grassmannian that lie in the Schubert cell corresponding to the partition $(n-m-c_1+1,n-m-c_2+2,\dotsc,n-c_m)$.
Theorem~\ref{theorem:q-bin-coeff} gives a combinatorial interpretation of the coefficients of ${n \brack m}_q$ which counts the number of $m$-dimensional subspaces of $\Fq^n$. The coefficient of $q^r$ in ${n \brack m}_q$ is the number of tuples $C\in C(n,m)$ for which $\beta_n(C)=r$.
\subsection{A recursion for subspaces with specified profile and pivots}
\label{sec:recursion-sigmar_nmu}
For each tuple $C\in C(n,\mu_1)$, let
\begin{displaymath}
  \phi_C:[n-\mu_1]\to [n]-C
\end{displaymath}
be the unique order-preserving bijection.
For every tuple $D\in C(n-\mu_1,\mu_2)$, let $T(C,D)$ be the tableau whose first column contains the elements of $C$, and second column contains the elements of $\phi_C(D)$ (both in increasing order).
\begin{example}
  If $n=8$, $C=(1,3,5)$ and $D=(1,3,5)$, then $\phi_C:[5]\to \{2,4,6,7,8\}$ is the unique order-preserving bijection.
  We have
  \begin{displaymath}
    T(C,D) = \ytableaushort{12,36,58},
  \end{displaymath}
  which happens to be a multilinear tableau.
  But if $n=4$, $C=(1,4)$, and $D=(1,2)$, then
  \begin{displaymath}
    T(C,D) = \ytableaushort{12,43},
  \end{displaymath}
  which is not a multilinear tableau, since its rows are not increasing.
\end{example}
\begin{definition}
  For each each integer partition $\mu=(\mu_1,\dotsc,\mu_k)$ and each $C\in C(n,\mu_1)$, let $\sigma^C_n(\mu)$ be the number of $\mu_1$-dimensional subspaces of $\Fq^n$ with pivots $C$ and profile $\mu$.
\end{definition}
\begin{theorem}
  \label{theorem:recursion}
  For each tuple $C\in C(n,\mu_1)$, and each integer partition $\mu=(\mu_1,\dotsc,\mu_k)$, we have
  \begin{displaymath}
    \sigma^C_n(\mu) = (q-1)^{\mu_2} q^{\binom{\mu_2}2}\sum_D c_q(T(C,D))\sigma^D_{n-\mu_1}((\mu_2,\dotsc,\mu_k)),
  \end{displaymath}
  the sum being over all tuples $D\in C(n-\mu_1,\mu_2)$ for which $T(C,D)$ is a multilinear tableau.
\end{theorem}
\begin{proof}
  Let $W$ be a subspace of $\Fq^n$ with pivots $C$ and $\Delta$-profile $\mu$.
  Consider the matrix $A_W$ in reduced row echelon form whose rows form a basis of $W$.
  Moving the pivots of $W$ to the left results in a block matrix of the form $(I\mid X)$, where $X\in M_{\mu_1\times(n-\mu_1)}(\Fq)$ has shape $C-\delta$.
  With respect to this permuted basis, $\Delta$ is still a diagonal matrix with distinct diagonal entries.
  Suppose that, with respect to the permuted basis, $\Delta$ has the block form
  $\begin{pmatrix}
    \Delta_1 & 0\\0&\Delta_2
  \end{pmatrix}$,
  where $\Delta_1$ is $\mu_1\times \mu_1$ and $\Delta_2$ is $(n-\mu_1)\times (n-\mu_1)$.
  The subspace $W+\Delta W+ \dotsb + \Delta^{i-1}W$ is the row space of the block matrix
  \begin{displaymath}
    \begin{pmatrix}
      I & X\\
      \Delta_1 & X\Delta_2\\
      \vdots & \vdots\\
      \Delta_1^{i-1} & X\Delta_2^{i-1}
    \end{pmatrix}.
  \end{displaymath}
  Applying the block row operation $R_r\to R_r-\Delta_1^{r-1}R_1$ for $r\geq 2$ gives 
  \begin{align*}
    \rank
    \begin{pmatrix}
      I & X\\
      \Delta_1 & X\Delta_2\\
      \vdots & \vdots\\
      \Delta_1^{i-1} & X\Delta_2^{i-1}
    \end{pmatrix} &=
    \rank
    \begin{pmatrix}
      I & X\\
      0 & X\Delta_2-\Delta_1X\\
      \vdots & \vdots\\
      0 & X\Delta_2^{i-1} - \Delta_1^{i-1}X
    \end{pmatrix} \\
    & =\mu_1 + \rank
    \begin{pmatrix}
      X\Delta_2-\Delta_1X\\
      \vdots\\
      X\Delta_2^{i-1}-\Delta_1^{i-1}X
    \end{pmatrix}.
  \end{align*}
  Let $Y=X\Delta_2-\Delta_1X$.
  Since $\Delta_1$ and $\Delta_2$ have no diagonal entries in common, $X\mapsto X\Delta_2-\Delta_1X$ is a linear automorphism of the space of $\mu_1\times(n-\mu_1)$ matrices of shape $C-\delta$. We have
  \begin{displaymath}
    X\Delta_2^r - \Delta_1^r X = Y\Delta_2^{r-1} + \Delta_1Y\Delta_2^{r-2} + \dotsb + \Delta_1^{r-2}Y\Delta_2 + \Delta_1^{r-1}Y.
  \end{displaymath}
  Therefore, $\dim(W+ \Delta W \dotsb + \Delta^{i-1}W)-\mu_1$ is the rank of the matrix
  \begin{displaymath}
    \begin{pmatrix}
      Y\\
      Y\Delta_2+\Delta_1Y\\
      \vdots\\
      Y\Delta_2^{i-2} + \Delta_1Y\Delta_2^{i-3} + \dotsb + \Delta_1^{i-3}Y\Delta_2 + \Delta_1^{i-2}Y
    \end{pmatrix}.
  \end{displaymath}
  Applying the block row operation $R_r\to R_r- \Delta_1R_{r-1}$ in the order $r=i-1,i-2,\ldots,2$ to the matrix above gives
  \begin{displaymath}
    \begin{pmatrix}
      Y\\
      Y\Delta_2\\
      \vdots\\
      Y\Delta_2^{i-2}
    \end{pmatrix}.
  \end{displaymath}
  Therefore the row space $\widetilde W$ of $Y$ has $\Delta_2$-profile $(\mu_2,\dotsc,\mu_k)$. We have thus established a map from the set of subspaces $W \subset \Fq^n$  with pivots  $C$  and $\Delta$-profile $\mu$ to the set of subspaces $\widetilde{W} \subset \Fq^{n-\mu_1}$  with $\Delta_2$-profile $\tilde{\mu}$ where $\tilde{\mu} := (\mu_2, \ldots, \mu_k)$. The fiber of such a \(\widetilde{W}\) is in bijection with \(\{Y \in M_{\mu_1 \times (n-\mu_1)}(\Fq) \mid Y \text{ has shape } C-\delta \text{ and } Y\text{ has row space }\widetilde{W}\}\).

  For each tuple $D=(d_1,\ldots,d_{\mu_2})\in C(n-\mu_1,\mu_2)$, let $\eta(C,D)$ be the number of $\mu_1\times(n-\mu_1)$ matrices $Y$ of shape $C-\delta$ whose row space is a fixed subspace $\widetilde W\subset \Fq^{n-\mu_1}$ with pivots $D$. We shall see that this quantity depends only on $D$ and not on the choice of $\widetilde W$, in which case we have
  \begin{displaymath}
    \sigma_n^C(\mu) = \sum_{D\in C(n-\mu_1,\mu_2)} \eta(C,D) \sigma_{n-\mu_1}^D((\mu_2,\dotsc,\mu_k)).
  \end{displaymath}
  It remains to show that for every subspace $\widetilde W\subset \Fq^{n-\mu_1}$ with pivots $D$, 
  \begin{equation}
    \label{eq:eta}
    \eta(C,D) = (q-1)^{\mu_2}q^{\binom{\mu_2}2}c_q(T(C,D))
  \end{equation}
  whenever $T(C,D)$ is a multilinear tableau, and $0$ otherwise.
  
  In order to prove \eqref{eq:eta}, we need to count the number of matrices $Y$ of shape $C-\delta$ whose row space is $\widetilde W$.
  We will do so by expanding the rows of $Y$ in terms of the rows of the $\mu_2\times (n-\mu_1)$ matrix $E$ in reduced row echelon form whose rows span $\widetilde W$.
  Let $E_j$ denote the $j$th row of $E$ for each $1\leq j\leq \mu_2$.
  For each $i\in [\mu_1]$, write the $i$th row of $Y$ as 
  \begin{displaymath}
    Y_i = \sum_{j=1}^{\mu_2} a_{ij}E_j,\text{ for } i=1,\dotsc,\mu_1.
  \end{displaymath}
  The  $\mu_1\times \mu_2$ matrix $A=(a_{ij})$ has rank $\mu_2$.
  
  Since $E$ has pivots $D$, the first non-zero entry of $E_j$ lies in the $d_j$th column.
  Therefore, since $Y$ has shape $C-\delta$, if $a_{ij}\neq 0$ then $d_j\geq c_i-(i-1)$.
  Thus, for each $j$, the number of potentially non-zero elements in the $j$th column of $A$ is
  \begin{displaymath}
    \#\{i\mid d_j\geq c_i-(i-1)\} = c_{j2}(T(C,D)) + (j-1),
  \end{displaymath}
  since $d_j\geq c_i-(i-1)$ if and only if $\phi_C(d_j)>c_i$.
  In particular, the first column of $A$ is any non-zero vector in $\Fq^{c_{12}(T(C,D))}$, for which there are $q^{c_{12}(T(C,D))}-1$ possibilities.
  The second column of $A$ is any non-zero vector in $\Fq^{c_{22}(T(C,D))+1}$ that is independent of the first column.
  Thus there are $q^{c_{22}(T(C,D))+1}-q$ possibilities for the second column of $A$.
  In general, having chosen the first $j-1$ columns of $Y$, the $j$th column is a vector in $\Fq^{c_{j2}(T(C,D))+(j-1)}$ that is independent of the first $j-1$ columns of $A$.
  Thus there are $q^{c_{j2}(T(C,D))+(j-1)}-q^{j-1}$ possibilities for the the $j$th column of $A$.
  Therefore
  \begin{displaymath}
    \eta(C,D) = \prod_{j=1}^{\mu_2} (q^{c_{j2}(T(C,D))+(j-1)}-q^{j-1}) = (q-1)^{\mu_2}q^{\binom{\mu_2}2}c_q(T(C,D)),
  \end{displaymath}
  proving \eqref{eq:eta}.
\end{proof}

Recall that $\Tab_{\subset [n]}(\lambda)$ is the set of multilinear tableaux of shape $\lambda$ whose support is a subset of $[n]$.
Theorem~\ref{theorem:recursion} has the following consequence:
\begin{theorem}
  \label{theorem:refined-counting-profile}
  For each partition $\mu=(\mu_1,\dotsc,\mu_k)$ and each $C\in C(n,\mu_1)$, we have
  \begin{displaymath}
    \sigma^C_n(\mu) = (q-1)^{\sum_{j\geq 2}\mu_j}q^{\sum_{j\geq 2}\binom{\mu_j}2} \sum_{T\in \Tab_{\subset [n]}(\mu') \text{ has first column }C} c_q(T).
  \end{displaymath}
\end{theorem}
\begin{proof}
  Induct on $k$, the number of parts of $\mu$.
  If $k=1$, then $\sigma_n^C(\mu)=1$, and there is nothing to prove.
  Otherwise, let $\tilde\mu=(\mu_2,\dotsc,\mu_k)$.
  By the induction hypothesis, we have
  \begin{displaymath}
    \sigma^D_{n-\mu_2}(\tilde\mu) =(q-1)^{\sum_{i\geq 3}\mu_i}q^{\sum_{i\geq 3}\binom{\mu_i}2}\sum_{\tilde T\in \Tab_{\subset[n-\mu_1]}(\tilde\mu') \text{ has first column }D}c_q(\tilde T)
  \end{displaymath}
  for every tuple $D\in C(n-\mu_1,\mu_2)$. The bijection $\phi_C:[n-\mu_1]\to [n]-C$ induces a bijection
  \begin{displaymath}
    \Tab_{\subset [n-\mu_1]}(\tilde\mu)\to \Tab_{\subset ([n]-C)}(\tilde\mu)
  \end{displaymath}
  which preserves $c_q$.
  Therefore,
  \begin{displaymath}
    \sigma^D_{n-\mu_2}(\tilde\mu) =(q-1)^{\sum_{i\geq 3}\mu_i}q^{\sum_{i\geq 3}\binom{\mu_i}2}\sum_{\parbox{3cm}{\tiny{$\title T\in \Tab_{\subset([n]-C)}(\tilde\mu')$\\has first column $\phi_C(D)$}}}c_q(\tilde T).
  \end{displaymath}
  Hence the recursive identity of Theorem~\ref{theorem:recursion} can be written as
  \begin{displaymath}
    \sigma^C_n(\mu) =
    (q-1)^{\mu_2}q^{\binom{\mu_2}2}\sum_{D,\tilde T}c_q(T(C,D)) (q-1)^{\sum_{i\geq 3}\mu_i}q^{\sum_{i\geq 3}\binom{\mu_i}2}c_q(\tilde T),
  \end{displaymath}
  the sum being over all pairs $(D,\tilde T)$ where $D\in C(n-\mu_1,\mu_2)$ and $\tilde T\in \Tab_{\subset([n]-C)}(\tilde\mu')$ with first column $\phi_C(D)$ are such that $T(C,D)$ is a multilinear tableau.
  Such pairs $(D,\tilde T)$ are in bijection with tableaux $T\in \Tab_{\subset[n]}(\mu')$ with first column $C$; the tableau $T$ associated to $(D,\tilde T)$ has first column $C$ and the remaining columns are those of $\tilde T$.
  Therefore
  \begin{displaymath}
    \sigma^C_n(\mu) = (q-1)^{\sum_{j\geq 2}\mu_j}q^{\sum_{j\geq 2}\binom{\mu_j}2}\sum_{T\in \Tab_{\subset[n]}(\mu') \text{ has first column } C}c_q(T),
  \end{displaymath}
  as claimed.
\end{proof}
Taking the sum over all tuples $C\in C(n,\mu_1)$ in Theorem~\ref{theorem:refined-counting-profile} gives Theorem~\ref{theorem:counting-profile}.

The decomposition of a Schubert cell with respect to profiles leads to the following combinatorial identity:
\begin{corollary}
  For each tuple $C\in C(n,m)$,
  \begin{displaymath}
    q^{\beta_n(C)} = \sum_{\{\mu\mid \mu_1=m\}} (q-1)^{\sum_{j\geq 2} \mu_j}q^{\sum_{j\geq 2}\binom{\mu_j}2}\sum_{T\in \Tab_{\subset [n]}(\mu') \text{ has first column }C} c_q(T).
  \end{displaymath}
\end{corollary}
\begin{example}
  Let $n=4$ and $C=(1,3)$.
  The five multilinear tableaux with support contained in $[4]$ with first column $(1,3)$ and their contributions to $q^{\beta_4(C)}=q^3$ are as follows:
  \ytableausetup{smalltableaux}
  \begin{displaymath}
    \begin{array}{{c|ccccc}}
      \hline
      & & & & & \\
      \text{Tableau} & \ytableaushort{1,3} & \ytableaushort{12,3} & \ytableaushort{14,3} & \ytableaushort{12,34}&\ytableaushort{124,3}\\
      & & & & &\\
      \text{Contribution} & 1 & (q-1) & (q-1)(q+1) & (q-1)^2q & (q-1)^2\\
      & & & & &\\
      \hline
    \end{array}
  \end{displaymath}
\end{example}
\subsection{Splitting subspaces}
\label{sec:chord} 
When $n=md$, Theorem \ref{theorem:counting-profile} gives the number of $\Delta$-splitting subspaces (see Eq. \eqref{eq:defsplit}) as
$$
\sigma_n(m^d)=(q-1)^{m(d-1)} q^{{m \choose 2}(d-1)}b_{(d^m)}(q).
$$
By Eq. \eqref{eq:blamset} we have
$$
b_{(d^m)}(q)=\sum_{\AA \in \Pi_n(d^m)}q^{v(\AA)}.
$$
When $d=2$, this is Touchard's polynomial $T_m(q)$ (see Section~\ref{sec:relation-q-hermite}).
\subsection{Partial profiles}
\label{sec:partialprofiles}
For every pair $C, C'\subset \nat$, define
\begin{displaymath}
  \beta(C,C') = |\{(c,c')\mid c<c',\,c\in C,c'\in C'\}|. 
\end{displaymath}
\begin{definition}
  For every positive integer $n$ and tableau $T\in \Tab_{\subset[n]}$, let $C_\text{last}$ denote the last column of $T$. Let
  \begin{displaymath}
    \gamma_n(T) = \beta(C_{\text{last}},[n]-\supp(T)).
  \end{displaymath}
\end{definition}
\begin{theorem}
  \label{theorem:partial_profiles-pivots}
  Let $\mu=(\mu_1,\dotsc,\mu_k)$ be a partition and suppose $n$ is a postive integer.
  For every tuple $C\in C(n,\mu_1)$, the number of subspaces of $\Fq^n$ with pivots $C$ and partial profile $\mu$ is given by
  \begin{displaymath}
    \pi^C_n(\mu) = (q-1)^{\sum_{j\geq 2}\mu_j}q^{\sum_{j\geq 2}\binom{\mu_j}2}\sum_{T\in \Tab_{\subset[n]}(\mu')\text{ has first column }C} c_q(T)q^{\gamma_n(T)}.
  \end{displaymath}
\end{theorem}
Summing over all tuples $C\in C(n,\mu_1)$ gives the following result.
\begin{theorem}
  \label{theorem:partial_profiles}
  Let $\mu=(\mu_1,\dotsc,\mu_k)$ be a partition and $n\in \nat$.
  The number of subspaces of $\Fq^n$ with partial profile $\mu$ is given by
  \begin{displaymath}
    \pi_n(\mu) = (q-1)^{\sum_{j\geq 2}\mu_j}q^{\sum_{j\geq 2}\binom{\mu_j}2}\sum_{T\in \Tab_{\subset[n]}(\mu')} c_q(T)q^{\gamma_n(T)}.
  \end{displaymath}
\end{theorem}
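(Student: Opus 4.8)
The plan is to deduce Theorem~\ref{theorem:partial_profiles} from the pivot-refined statement Theorem~\ref{theorem:partial_profiles-pivots} by summing over all $C\in C(n,\mu_1)$, so it suffices to describe a proof of Theorem~\ref{theorem:partial_profiles-pivots}. I would prove it by essentially rerunning the argument behind Theorems~\ref{theorem:recursion} and~\ref{theorem:refined-counting-profile}, the only change being in the base case of the induction on $k=l(\mu)$.

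First, observe that the linear-algebra reduction in the proof of Theorem~\ref{theorem:recursion} never uses the stabilization clause in the definition of a profile: it shows that if $W$ has pivots $C$, then, writing $(I\mid X)$ for its echelon form with the pivot columns moved left and $Y=X\Delta_2-\Delta_1X$, one has $\dim(W+\Delta W+\dots+\Delta^{i-1}W)=\mu_1+\dim(\widetilde W+\Delta_2\widetilde W+\dots+\Delta_2^{i-2}\widetilde W)$ for all $i$, where $\widetilde W$ is the row space of $Y$. Comparing this identity for $i=1,\dots,k$ shows that $W$ has \emph{partial} $\Delta$-profile $(\mu_1,\dots,\mu_k)$ if and only if $\widetilde W$ has partial $\Delta_2$-profile $(\mu_2,\dots,\mu_k)$. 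Since the count $\eta(C,D)=(q-1)^{\mu_2}q^{\binom{\mu_2}2}c_q(T(C,D))$ of matrices $Y$ of shape $C-\delta$ with a fixed row space of pivots $D$ is unchanged, one obtains the recursion
\begin{displaymath}
  \pi^C_n(\mu)=(q-1)^{\mu_2}q^{\binom{\mu_2}2}\sum_D c_q(T(C,D))\,\pi^D_{n-\mu_1}((\mu_2,\dots,\mu_k)),
\end{displaymath}
the sum running over $D\in C(n-\mu_1,\mu_2)$ with $T(C,D)$ multilinear, exactly as in Theorem~\ref{theorem:recursion}.

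Next I would set up the induction on $k$. In the base case $k=1$ the partial profile $(\mu_1)$ imposes only $\dim W=\mu_1$, so $\pi^C_n((\mu_1))$ is the number of $\mu_1$-dimensional subspaces of $\Fq^n$ with pivots $C$, which by Theorem~\ref{theorem:q-bin-coeff} equals $q^{\beta(C,[n]-C)}$; this matches the asserted formula, since the only $T\in\Tab_{\subset[n]}(\mu')$ with first column $C$ is the single column $C$, for which $c_q(T)=1$, $\supp(T)=C_{\text{last}}=C$, and $\gamma_n(T)=\beta(C,[n]-C)$. For the inductive step I would transport the formula for $\pi^D_{n-\mu_1}((\mu_2,\dots,\mu_k))$ along the order-preserving bijection $\phi_C:[n-\mu_1]\to[n]-C$, which preserves $c_q$ and, being order-preserving, preserves the relevant $\beta$-counts; as in the proof of Theorem~\ref{theorem:refined-counting-profile}, the pairs $(D,\tilde T)$ appearing in the recursion correspond bijectively to tableaux $T\in\Tab_{\subset[n]}(\mu')$ with first column $C$, with $T$ obtained by prepending the column $C$ to $\tilde T$. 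Under this correspondence $c_q(T(C,D))c_q(\tilde T)=c_q(T)$, while $\supp(T)=C\sqcup\phi_C(\supp(\tilde T))$ and $C_{\text{last}}(T)=\phi_C(C_{\text{last}}(\tilde T))$, so $[n]-\supp(T)=\phi_C\big([n-\mu_1]-\supp(\tilde T)\big)$ and hence $\gamma_n(T)=\gamma_{n-\mu_1}(\tilde T)$. Substituting and collecting the scalar factors $(q-1)^{\mu_2}q^{\binom{\mu_2}2}\cdot(q-1)^{\sum_{i\ge3}\mu_i}q^{\sum_{i\ge3}\binom{\mu_i}2}=(q-1)^{\sum_{j\ge2}\mu_j}q^{\sum_{j\ge2}\binom{\mu_j}2}$ yields the claimed expression for $\pi^C_n(\mu)$, and summing over $C$ gives Theorem~\ref{theorem:partial_profiles}.

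The main obstacle is the bookkeeping in this last step: one must verify that $\gamma$ behaves correctly under the recursion, i.e.\ that the quantity $\gamma$ computed for $\tilde T$ in the ambient set $[n-\mu_1]$ agrees, after transport by $\phi_C$, with $\gamma_n(T)$ in the ambient set $[n]$. This hinges on two facts — that $\phi_C$ is order-preserving, so that $\beta$ is invariant, and that deleting the pivot column $C$ from $T$ removes exactly the set $C$ from $\supp(T)$ while leaving the last column untouched (valid because $k\ge2$ in the inductive case) — both of which are routine but need to be stated carefully. Everything else is an almost verbatim reprise of the profile case, the one genuinely new feature being that the base case contributes $q^{\beta(C,[n]-C)}$ rather than $1$, which is precisely the source of the extra factor $q^{\gamma_n(T)}$.
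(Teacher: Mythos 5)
Your proposal is correct and follows the paper's own route exactly: the paper also deduces the theorem by summing Theorem~\ref{theorem:partial_profiles-pivots} over $C$, and proves the latter by repeating the recursion of Theorems~\ref{theorem:recursion} and~\ref{theorem:refined-counting-profile} with the base case replaced by $q^{\beta(C,[n]-C)}$ from Theorem~\ref{theorem:q-bin-coeff}. Your explicit verification that $\gamma_n(T)=\gamma_{n-\mu_1}(\tilde T)$ under the order-preserving transport $\phi_C$ is precisely the bookkeeping the paper leaves implicit.
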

The proof of Theorem~\ref{theorem:partial_profiles-pivots} is essentially the same as that of Theorem~\ref{theorem:refined-counting-profile}.
Only the base case is different.
When $\mu$ has only one part, say $\mu=(m)$, $\pi_n(C)$ is given by Theorem~\ref{theorem:q-bin-coeff}.
Since the remaining steps in the proof are the same, they are omitted.

\begin{corollary}
  \label{cor:numantiinv}
  The number of $l$-fold anti-invariant subspaces of dimension $m$ of a regular diagonal matrix on $\Fq^n$ equals
  $$
(q-1)^{lm}q^{l{m \choose 2}}\sum_{T\in \Tab_{\subset[n]}((l+1)^m)} c_q(T)q^{\gamma_n(T)}.   
  $$
\end{corollary}
\begin{proof}
Since anti-invariant subspaces are those with partial profile $\mu=(m^{l+1})$ (see Eq. \eqref{eq:defanti}) the result follows from Theorem \ref{theorem:partial_profiles}. 
\end{proof}
\subsection{Combinatorial interpretation of the $q$-Stirling numbers}
Given a subspace $W\subset \Fq^n$, let $r(W)$ denote the dimension of the smallest $\Delta$-invariant subspace containing $W$.
Thus
\begin{displaymath}
  r(W) = \dim \sum_{j\geq 0} \Delta^j W.
\end{displaymath}
\begin{lemma}
  \label{lemma:combin-q-stir}
  Let $m$, $n$, and $r$ be non-negative integers, with $n\geq r\geq m$. 
  Then the number of subspaces $W\subset \Fq^n$ of dimension $m$ such that $r(W)=r$ is given by
  \begin{equation}
    \label{eq:Carlitz-r}
    (q-1)^{r-m}\binom nr S_q(r,m).
  \end{equation}
\end{lemma}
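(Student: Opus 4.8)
The plan is to count subspaces $W\subset\Fq^n$ with $\dim W=m$ and $r(W)=r$ by first fixing the $\Delta$-invariant "envelope" $V=\sum_{j\geq0}\Delta^jW$ and then counting the $W$'s inside a given $V$. Since $\Delta$ is diagonal with $n$ distinct eigenvalues $a_1,\dotsc,a_n$, its $\Delta$-invariant subspaces are exactly the coordinate subspaces $V=\bigoplus_{i\in I}\Fq e_i$ indexed by subsets $I\subset[n]$; an $r$-dimensional invariant subspace corresponds to a choice of $r$ of the $n$ eigenvalues, giving the factor $\binom nr$. So it suffices to prove: for $V$ of dimension $r$ on which $\Delta|_V$ is diagonal with $r$ distinct eigenvalues, the number of $m$-dimensional $W\subset V$ whose smallest $\Delta$-invariant subspace (inside $V$, equivalently inside $\Fq^n$) is all of $V$ equals $(q-1)^{r-m}S_q(r,m)$.

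The key step is to recognize that "$W$ generates $V$ as a $\Delta$-module" together with $\dim W=m$ is precisely the condition that $W$ has $\Delta|_V$-profile equal to a partition $\mu$ with $\mu_1=m$ and $|\mu|=r$: indeed the envelope $\sum_j\Delta^jW$ stabilizes at dimension $|\mu|$ once its dimension stops growing, and $\mu_1=\dim W=m$. Summing Theorem~\ref{theorem:counting-profile} over all such $\mu$, and using $\binom rr=1$ (the ambient dimension is now $r$, and $|\mu|=r$), the count inside $V$ becomes
\begin{equation*}
  \sum_{\substack{\mu\vdash r\\ \mu_1=m}}(q-1)^{\sum_{j\geq2}\mu_j}q^{\sum_{j\geq2}\binom{\mu_j}2}b_{\mu'}(q).
\end{equation*}
Now reindex by the conjugate partition $\lambda=\mu'$: the condition $\mu_1=m$ becomes $l(\lambda)=m$, the condition $|\mu|=r$ becomes $|\lambda|=r$, and $\sum_{j\geq2}\mu_j=|\mu|-\mu_1=r-m$, which is independent of $\mu$ and pulls out as $(q-1)^{r-m}$. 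The remaining exponent $\sum_{j\geq2}\binom{\mu_j}2$ must be rewritten in terms of $\lambda$; as noted in the proof of Theorem~\ref{theorem:q-stirling}, $\sum_{j\geq2}\binom{\mu_j}2=\sum_i(i-1)(\lambda_i-1)$. Hence the inner count equals
\begin{equation*}
  (q-1)^{r-m}\sum_{\substack{\lambda\vdash r\\ l(\lambda)=m}}q^{\sum_i(i-1)(\lambda_i-1)}b_\lambda(q),
\end{equation*}
and by Eq.~\eqref{eq:q-stirling-alt} (the $b_\lambda$-form of Theorem~\ref{theorem:q-stirling}) this sum is exactly $S_q(r,m)$. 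Multiplying by the $\binom nr$ choices of envelope gives the claimed formula $(q-1)^{r-m}\binom nr S_q(r,m)$.

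The main obstacle I anticipate is the first bijective reduction: making rigorous that every $m$-dimensional $W\subset\Fq^n$ with $r(W)=r$ sits inside a \emph{unique} $r$-dimensional $\Delta$-invariant subspace, and that inside that subspace the condition $r(W)=r$ is equivalent to $W$ having some $\Delta$-profile $\mu$ with $|\mu|=r$ (as opposed to merely a partial profile). This needs the observation that over a finite-dimensional space the chain $W\subset W+\Delta W\subset\cdots$ stabilizes, so the envelope has an honest $\Delta$-profile with $|\mu|=r(W)$, together with the fact that the envelope of $W$ inside $\Fq^n$ and inside $V$ coincide. Everything after that—the conjugation bookkeeping and the appeal to Theorem~\ref{theorem:q-stirling}—is routine.
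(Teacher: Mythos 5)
Your proposal is correct and follows essentially the same route as the paper: the paper's proof simply writes the count as $\sum_{\mu\vdash r,\,\mu_1=m}\sigma_n(\mu)$, applies Theorem~\ref{theorem:counting-profile} (which already supplies the $\binom nr$ factor), and invokes Eq.~\eqref{eq:q-stirling-alt} after conjugating, exactly as you do. Your preliminary decomposition by the invariant envelope $V$ is a correct but redundant elaboration of the fact that $\dim W=m$ and $r(W)=r$ together amount to $W$ having $\Delta$-profile $\mu$ with $\mu_1=m$ and $|\mu|=r$.
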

\begin{proof}
  By Theorem \ref{theorem:counting-profile}, the number of subspaces $W\subset \Fq^n$ of dimension $m$ such that $r(W)=r$ is given by
  \begin{align*}
    \sum_{\mu\vdash r,\;\mu_1=m} \sigma_n(\mu) & = \sum_{\mu\vdash r, \;\mu_1=m} \binom nr(q-1)^{\sum_{j\geq 2}\mu_j}q^{\sum_{j\geq 2}\binom{\mu_2}2}b_{\mu'}(q)\\
    & = (q-1)^{r-m}\binom nr S_q(r,m),
  \end{align*}
  using the identity \ref{eq:q-stirling-alt}.
\end{proof}
\subsection{Bijective proof of Carlitz's identity}
Recall that the $q$-binomial coefficient ${n \brack m}_q$ counts the number of subspaces $W$ of $\Fq^n$ of dimension $m$.
Each such subspace must have $m\leq r(W)\leq n$.
Summing over all possibilities, we get a bijective proof of Carlitz's identity \cite[Eq. (8)]{MR1501675} (see also \cite[Eq. (3.4)]{MR27288}):
\begin{equation}
  \label{eqn:carlitz}
  {n \brack m}_q = \sum_{r=m}^n (q-1)^{r-m}\binom nr S_q(r,m).
\end{equation}
To the best of our knowledge, this is the first bijective proof of Carlitz's identity.
 
 
\subsection*{Acknowledgements}
We thank the anonymous referees for their detailed and helpful comments.
We used SageMath \cite{sagemath} extensively as an experimental tool to formulate many of our results.
\printbibliography
\end{document}